\documentclass{article}

\newlength{\myleftmargin}
\setlength{\myleftmargin}{-5ex}

\usepackage{amsmath}
\usepackage{amssymb}
\usepackage{amsthm}
\usepackage{color}
\usepackage{ifthen,changepage}
\DeclareSymbolFontAlphabet{\Bbb}{AMSb}


\newtheorem{theorem}{Theorem}[section]
\newtheorem{lemma}[theorem]{Lemma}
\newtheorem{proposition}[theorem]{Proposition}
\newtheorem{corollary}[theorem]{Corollary}
%
%
\newtheorem{definition}[theorem]{Definition}

%




\newcommand{\argmin}{\operatornamewithlimits{arg\,min}}
\newcommand{\Ysf}{\mathsf{Y}}

\newcommand{\atob}[2]{\emph{#1)} $\Rightarrow$ \emph{#2)}.}


\newlength{\fixboxwidthextra}
\setlength{\fixboxwidthextra}{10ex}
\newlength{\fixboxwidth}
\setlength{\fixboxwidth}{\marginparwidth}
\addtolength{\fixboxwidth}{\fixboxwidthextra}
\newcommand{\fix}[1]{\marginpar{\checkoddpage
\ifthenelse{\boolean{oddpage}}
{\hspace*{-0ex}\fbox{\parbox{\fixboxwidth}{\tiny #1}}}
{\hspace*{-\fixboxwidthextra}\fbox{\parbox{\fixboxwidth}{\tiny #1}}}}}

\definecolor{darkgreen}{rgb}{0,0.6,0}



\newcommand{\ca}[1]{{\mathcal #1}}


\newcommand{\mycdot}{\,\cdot\,}


\newcommand{\N}{\mathbb{N}}

\newcommand{\R}{\mathbb{R}}


\newcommand{\E}{\mathbb{E}}


\renewcommand{\a}{\alpha}
\renewcommand{\b}{\beta}
\newcommand{\g}{\gamma}
\newcommand{\G}{\Gamma}
\renewcommand{\d}{\delta}
\newcommand{\D}{\Delta}
\newcommand{\e}{\varepsilon}

\newcommand{\lb}{\lambda}

\newcommand{\s}{\sigma}

\newcommand{\p}{\varphi}
\renewcommand{\P}{\Phi}

\newcommand{\Om}{\Omega}


\DeclareMathOperator{\spann}{span}

\DeclareMathOperator{\id}{id}
\DeclareMathOperator{\cone}{cone}
\DeclareMathOperator{\im}{im}
\DeclareMathOperator{\myint}{int}
\DeclareMathOperator{\Gr}{graph}
\DeclareMathOperator{\Dom}{dom}




\newcommand{\eins}{\boldsymbol{1}}


\newcommand{\snorm}[1]{\Vert #1 \Vert}
\newcommand{\mnorm}[1]{\bigl\Vert \, #1 \, \bigr\Vert}

\newcommand{\hnorm}[1]{\Vert #1 \Vert_{H}}
\newcommand{\inorm}[1]{\Vert #1 \Vert_\infty}
\newcommand{\enorm}[1]{\Vert #1 \Vert_{E}}
\newcommand{\fnorm}[1]{\Vert #1 \Vert_{F}}


\newcommand{\Lx}[2]{L_{#1}(#2)}




\newcommand{\SF}{S}


\newcommand{\xstar}{x_\star}

\newcommand{\relint}[2]{\mathring #1^#2}

\evensidemargin .2in
\oddsidemargin  .2in
\setlength\topmargin{-0.75in}
\setlength\textheight{9.25in}
\setlength\textwidth{6.15in}

\usepackage{enumitem}


\usepackage{amsmath}
\usepackage{amssymb}
\usepackage{color}
\usepackage{url}
\usepackage{ifthen,changepage}
\DeclareSymbolFontAlphabet{\Bbb}{AMSb}

\title{Representation of Quasi-Monotone Functionals by Families of Separating Hyperplanes}

\author{Ingo Steinwart\\
Institute for Stochastics and Applications\\
Faculty 8: Mathematics and Physics\\
University of Stuttgart\\
D-70569 Stuttgart Germany \\
\texttt{\small ingo.steinwart@mathematik.uni-stuttgart.de}
}

\begin{document}
\maketitle

\begin{abstract}
We characterize when the level sets of a continuous quasi-monotone functional defined on a suitable convex subset of a 
normed space can be uniquely represented by a family of bounded continuous functionals. Furthermore, we investigate 
how regularly these functionals depend on the parameterizing level.
Finally, we show how this question relates to the recent problem of property elicitation
that simultaneously attracted interest in machine learning, statistical evaluation of forecasts, and finance.
\end{abstract}

\section{Introduction}\label{sec:intro}

Suppose we have a normed space $(E, \enorm\cdot)$,  a non-empty convex subset $B\subset E$ that is contained in some 
closed affine hyperplane
not passing the origin,
and a continuous, in general non-linear, functional 
$\G:B\to \R$ for which
the level sets $\{\G=r\} := \{x\in B: \G(x) = r\}$ are convex for all $r\in \im \G$.
Let us denote the interior of the image   of $\G$ by $I$, that is $I:= \mathring{\G(B)} = \mathring{\im \G}$.
In this paper we consider the following questions:
\begin{enumerate}
  \item Under which conditions is there a unique 
  family $(z'_r)_{r\in I}$ of (normalized) bounded linear functionals on $E$ 
  such that for all $r\in I$ we have 
  \begin{align*}
 \{\G<r\} &= \{z_r'< 0\} \cap B \\
\{\G=r\} &= \{z_r'= 0\} \cap B\\ 
\{\G>r\} &= \{z_r'>0\} \cap B\, ?
\end{align*}
  \item When is the map $r\mapsto \hat z'_r$ measurable or even continuous?
\end{enumerate}
While at first glance these questions seem to be of little practical value they actually lie at the heart of 
a problem that recently attracted interest in machine learning, statistical evaluation of forecasts, and finance,
see \cite{StPaWiZh14a,AgAg15a,FrKa15a}, \cite{GnRa07a,Gneiting11a}, and \cite{LaPeSh08a,FrKa14a, Ziegel14a,WaZi15a}, respectively,
as well as the various references mentioned in these articles.

Let us 
 briefly explain this problem
 while generously ignoring all mathematical issues. 
 To this end,
   let 
$\ca P$ be a set of probability
measures on $\Omega$,  and 
$\G \colon \ca P \to \R$ be an arbitrary map, which in the following will be called a property on $\ca P$.
 Simple examples of  properties of distributions on $\Om=\R$ are 
the mean, the median, and the variance, while  more complicated properties are the (conditional) value at risk
and conditional tail expectation. Now, for some properties 
including the mean, the median, and others, see  \cite{Gneiting11a} for an extensive list,
there exists a so-called scoring function 
$\SF: \Om\times   \R\to \R$ such that 
\begin{equation}\label{def-consist}
 \G(P) = \underset{r \in \R}{\argmin} \ \mathbb{E}_{\Ysf\sim P}S(r,\Ysf)
\end{equation}
for all $P\in \ca P$, i.e.~$\G(P)$ is the unique minimizer of the expected scoring function.
Such properties, which are  called elicitable, have various positive aspects: For example,
if $P$ is only approximately known, e.g.~by data, then we can replace $P$ by its approximation $\hat P$ in 
\eqref{def-consist} to estimate $\G(P)$ by $\G(\hat P)$. Similarly, if we have two estimates $\hat r_1$ and 
$\hat r_2$ of $\G(P)$ then we can compare these by comparing the corresponding values 
$\mathbb{E}_{\Ysf\sim P}S(\hat r_1,\Ysf)$ and $\mathbb{E}_{\Ysf\sim P}S(\hat r_2,\Ysf)$, or 
their $\hat P$-approximations if $P$ is unknown, see e.g.~\cite{Steinwart07a,LambertXXa}.
While these observations are rather straightforward they lie, in a conditional i.e.~functional 
form, at the very core of a huge class of 
machine learning algorithms, namely so-called (regularized) empirical 
risk minimizers \cite{Vapnik98,ScSm02,StCh08}.

Elicitable properties are therefore highly desirable, but unfortunately, not every property is elicitable.
Indeed, \cite{Osband85a}, see also \cite{LaPeSh08a,Gneiting11a}
showed that for convex $\ca P$ an elicitable property needs to have 
convex level sets, and the variance does, for example, not have such level sets.
Having convex level sets alone is, however, not sufficient for elicitability, 
 and hence 
one needs additional assumptions to obtain sufficient conditions. To find such conditions, one key idea, 
known as Osband's principle \cite{Osband85a,LaPeSh08a,Gneiting11a,StPaWiZh14a},
is to take 
the derivative on the right-hand side of \eqref{def-consist} to (hopefully) find
that $\G(P)$ can be characterized as the only zero of the function $r\mapsto \mathbb{E}_{\Ysf\sim P}S'(r,\Ysf)$.
Now observe that the linearity of $\E$ in $P$ makes it possible to write 
\begin{align}\label{osband}
   \mathbb{E}_{\Ysf\sim P}S'(r,\Ysf) = \bigl\langle S'(r,\cdot), P\bigr\rangle \, ,
\end{align}
where $\langle z, p\rangle := z(p)$ denotes the evaluation of a linear
functional $z$ at some vector $p$. Clearly, if $z_r' := S'(r,\cdot)$ is interpreted as a functional over 
$\spann \ca P$ (or its closure), and if these functionals satisfy the equations in our first question,
then $\G(P)$ can indeed be characterized as the only zero of the function above. Consequently,  this 
part of  Osband's principle  will work as soon as we have a positive answer to our first question.
However, to construct one (or actually all) scoring functions 
from $S'$ one needs additional regularity of $r\mapsto S'(r,\cdot)$ such as suitable 
measurability or even continuity, see \cite{LambertXXa,StPaWiZh14a}.
This motivates the second question we deal with in this paper.

We also like to note that two questions above have a simple answer, if $\G$ is defined on the entire space $E$
by
\begin{align}\label{full-G}
   \G(x) := h(\langle z', x\rangle)\, , \qquad \qquad x\in E\, ,
\end{align}
where $z':E\to \R$ is a bounded linear functional and $h:\R\to \R$ is a strictly monotone map.
Indeed, if we pick an $\p'\in E'$ with $B\subset \{\p'=1\}$ and assume, for example, that $h$ is strictly increasing, then 
\begin{displaymath}
   z_r' = z' - h^{-1}(r) \p'
\end{displaymath}
defines such a family of separating functionals. However, if $B$ is 'too small', then various $\p'$ are possible,
and therefore this construction is, even after renormalization, not unique.
Nonetheless, \eqref{full-G} is somewhat archetypal, since
 for finite dimensional spaces $E$,
every continuous $\G:E\to \R$ that has convex level sets is of the form \eqref{full-G}
for some monotone $h$, see
\cite{ThPa73a} in combination with  Lemma \ref{quasi-monotone}. Moreover, without $h$ being strictly monotone,
we cannot expect a positive answer to our first question, 
and hence this assumption in \eqref{full-G} was not a restriction, either.
In general, however, continuous $\G:B\to \R$ with convex level sets are \emph{not} of the form \eqref{full-G},
not even in three dimensions, since roughly speaking the form \eqref{full-G} is forced by the requirement that 
the level sets  cannot intersect, and hence they need to be parallel if $\G$ is defined on the full space $E$.
But for smaller $B$, this is no longer necessary, and it is actually elementary to construct such examples.

The rest of this work is organized as follows: In Section \ref{sec:exist} we characterize when we have a separating
family in the sense of the first question. Section \ref{sec:meas-sep} then investigates measurable dependence on $r$
and Section \ref{sec:cont-sep} deals with continuous dependence. In Section \ref{sec:quasi-monotone}
we present some auxiliary results on quasi-monotone functions and all proofs can be found in Sections \ref{sec:proof-1}
to \ref{sec:proof-3}.

\section{Existence and Uniqueness of the Separating Family}\label{sec:exist}

In this section we give positive answers to the first  question raised in the introduction, that is, we show that
under some conditions  on $\G$ and $B$ specified below there 
exists a unique family of separating bounded linear functionals

Let us begin by  fixing some notations.
Throughout this paper  $(E, \enorm \cdot)$ is a
normed space if not stated otherwise, $E'$ denotes its dual and $B_E$  its closed unit ball. 
 Moreover, 
for an $A\subset E$ we write $\relint AE$ for the interior of $A$ with respect to the norm $\enorm\cdot$.
If this norm is known from the context we may abbreviate notations by $\mathring A:= \relint AE$, and 
for typesetting reasons, we sometimes also write $\myint A:=  \mathring A$. Similarly, $\overline A^{_E}$
denotes the closure of $A$ with respect to the norm $\enorm\cdot$, and if the latter is known from the context 
we may again write $\overline A$.
Moreover, $\spann A$ denotes the linear space spanned by $A$ and $\cone A:= \{\a x: \a\geq 0, x\in A \}$ 
denotes the cone generated by $A$. In addition, the null space of a linear functional $z':E\to \R$
is denoted by $\ker z$, and the restriction of a function $f:A\to B$ onto $C\subset A$ is denoted by $f_{|C}$.

With the help of this notations we can now formulate our first set of assumptions that describe 
the set $B$. Throughout these assumptions, $E$ denotes a normed space, $B\subset E$ is non-empty and convex, and 
$H:=\spann B$.

\begin{description}[itemsep=-.5ex]
\item[B1 (Simplex face).] 
	There exists a $\p'\in E'$
	such that $B\subset \{\p'=1\}$. 

\item[B2 (Dominating norm).] There exists an $\xstar\in B$ such that for  $A:= -\xstar + B$ and
	\begin{displaymath}
	 F:= \spann A 
	\end{displaymath}
	  there exists a norm $\fnorm \cdot$ on $F$ with $\enorm \cdot \leq \fnorm\cdot$.
	 
\item[B2* (Non-empty relative interior).] Assumption \textbf{B2} is satisfied and  $ 0 \in \relint AF$. 
	
\item[B3 (Cone decomposition).] There exists a constant $K>0$ such that for all $z\in H$
	there exist $z^-, z^+\in \cone B$ with  $z=z^+-z^-$ and 
	\begin{displaymath}
	\enorm{z^-} + \enorm{z^+} \leq K \enorm z\, .
	\end{displaymath}

\item[B4 (Denseness).] The space  $H$ is dense in $E$ with respect to  $\enorm\cdot$. 

\end{description}

To illustrate these assumptions 
in view of the elicitation question raised in the introduction,
we fix a probability measure $\mu$ on  
some measurable space $(\Om, \ca A)$, and 
 consider 
the set of bounded, integrable probability densities with respect to $\mu$, that is
\begin{equation}\label{Dgeq0}
   \D^{\geq 0}  :=   \{  h\in \Lx \infty \mu \colon   h\geq 0,\, \E_\mu h =1  \}\, .
\end{equation}
Our set $\ca P$ will then be $\ca P:= \{hd\mu: h\in  \D^{\geq 0}\}$.
For $p\in [1,\infty)$,  $E:= \Lx {p} \mu$, and $\p' := \E_\mu(\cdot)$ 
we then verify that $ \D^{\geq 0}$  satisfies \textbf{B1}, and for $p=1$,
the norm induced on $\ca P$ equals the total variation  norm. 
 Moreover, we have $H= \Lx \infty \mu$ and therefore \textbf{B4} is obviously satisfied.
 Furthermore, by considering $h=\max\{0, h\}- \max\{0, -h\}$ we obtain \textbf{B3} for 
 $K:= 2^{1-1/p}$.
 Consequently, the only task left is to find 
 a suitable $h_\star\in \D^{\geq 0}$ and 
 an appropriate norm $\fnorm\cdot$.
 Unfortunately, taking $\fnorm \cdot = \enorm\cdot $ won't work in this example, since 
 the elements in 
 \begin{displaymath}
    -h_\star +  \D^{\geq 0} =  \{  h\in \Lx \infty \mu \colon   h\geq -h_\star,\, \E_\mu h =0  \}
 \end{displaymath}
are pointwise bounded from below  by $-h_\star$
 but this cannot be guaranteed in any $\enorm\cdot$-ball in $F$ around the origin. However, for
 $\fnorm\cdot := \inorm \cdot$ and $h_\star :=  \eins_\Om$  
 Assumption \textbf{B2*} does hold.
 
The example above illustrates, that the choice of $\fnorm\cdot$ may give some extra freedom when applying 
the results of this paper. Unfortunately, however, this   freedom comes   
for an extra price we have to pay at a different condition. Before we can explain the details let us 
present the following lemma 
 that investigates the spaces $H$ and $F$ in a bit more detail.

\begin{lemma}\label{F-is-in-kernel}
Let \textbf{B1} and \textbf{B2} be satisfied.
Then, the space $F$ satisfies  $F\subset \ker \p'$. In particular, we have $\xstar \not\in F$
and 
\begin{displaymath}
 H=  F\oplus \R\xstar \, .
\end{displaymath}
Furthermore, if we equip $H$  with the norm $\hnorm\cdot$, defined by
\begin{displaymath}
 \hnorm{y+\a \xstar} := \fnorm y + \enorm{\a \xstar}\, , \qquad \qquad \qquad y\in F, \a\in \R,
\end{displaymath}
then, 
we have $\enorm\cdot \leq \hnorm\cdot$ on $H$, $\fnorm \cdot = \hnorm\cdot$ on $F$.
Finally, for all $x_1,x_2\in B$ we have $x_1-x_2\in F$.
\end{lemma}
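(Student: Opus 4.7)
The plan is to extract everything from the observation that assumption \textbf{B1} forces $\p'$ to vanish on the translated set $A = -\xstar + B$, and then leverage $\p'(\xstar) = 1$ to pin down the decomposition of $H$.

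First I would note that for any $x \in B$ we have $\p'(x - \xstar) = \p'(x) - \p'(\xstar) = 1 - 1 = 0$, so $A \subset \ker \p'$; since $\ker \p'$ is a linear subspace, taking spans yields $F = \spann A \subset \ker \p'$. Because $\p'(\xstar) = 1 \neq 0$, this immediately gives $\xstar \notin F$. For the last claim, given $x_1, x_2 \in B$, I would write $x_1 - x_2 = (x_1 - \xstar) - (x_2 - \xstar) \in \spann A = F$.

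Next, for the decomposition $H = F \oplus \R \xstar$, I would verify both that the sum is direct and that it is all of $H$. Directness follows from $F \cap \R\xstar \subset \ker\p' \cap \R\xstar = \{0\}$, using $\p'(\xstar) = 1$. For surjectivity, any $h \in H = \spann B$ can be expressed as a finite linear combination $h = \sum_i \a_i x_i$ with $x_i \in B$, and I would rewrite this as
\begin{displaymath}
 h = \sum_i \a_i (x_i - \xstar) + \Bigl(\sum_i \a_i\Bigr) \xstar,
\end{displaymath}
where the first term lies in $F$ and the second in $\R \xstar$.

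Finally I would check the norm claims. Because the decomposition $H = F \oplus \R \xstar$ is direct, the formula $\hnorm{y + \a \xstar} := \fnorm{y} + \enorm{\a \xstar}$ is well-defined, and the norm axioms for $\hnorm\cdot$ are inherited from those of $\fnorm\cdot$ on $F$ and $\enorm\cdot$ on $\R \xstar$. For the inequality $\enorm\cdot \leq \hnorm\cdot$, I would apply the triangle inequality in $\enorm\cdot$ together with the assumption $\enorm{\cdot} \leq \fnorm\cdot$ on $F$ from \textbf{B2}, obtaining $\enorm{y + \a \xstar} \leq \enorm{y} + \enorm{\a \xstar} \leq \fnorm{y} + \enorm{\a \xstar} = \hnorm{y + \a \xstar}$. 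The equality $\fnorm\cdot = \hnorm\cdot$ on $F$ is immediate from uniqueness of the decomposition, since $y \in F$ decomposes as $y + 0 \cdot \xstar$. The whole argument is quite mechanical; the only substantive point is recognizing that \textbf{B1} forces $F$ to sit inside the codimension-one hyperplane $\ker \p'$, which is what makes the direct sum decomposition work.
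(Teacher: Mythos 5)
Your proof is correct and follows essentially the same line of argument as the paper's: establish $A \subset \ker\p'$ to get $F \subset \ker\p'$ and hence $\xstar\notin F$, verify the direct sum decomposition of $H$ via the explicit rewriting of a linear combination of elements of $B$, and then check the norm claims mechanically using the triangle inequality and $\enorm\cdot\leq\fnorm\cdot$ from \textbf{B2}.
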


Roughly speaking, Lemma \ref{F-is-in-kernel} provides a simple way to extend the norm $\fnorm\cdot$
to the space $H=\spann B$ in which most of our initial geometric arguments take place. In addition,
it is a key ingredient in the second of the following set of assumptions on $\G$.
Throughout these assumptions $B\subset E$ again denotes a non-empty convex subset of the normed space $E$.
Moreover, $\G:B\to \R$ denotes an arbitrary map and we write $I:= \mathring{\G(B)}$. Finally, we
 assume that \textbf{B1} and \textbf{B2} are satisfied whenever this is necessary.

\begin{description}[itemsep=-.5ex]

\item[G1 ($F$-continuous and convex level sets).] The map 
	$\G:B\to \R$ is  $\fnorm\cdot$-continuous and its level sets 
	 $\{\G=r\}$ are convex for all $r\in \im \G$.

\item[G1* ($E$-continuous and convex level sets).] The map 
	$\G:B\to \R$ is  $\enorm\cdot$-continuous and its level sets 
	 $\{\G=r\}$ are convex for all $r\in \im \G$.
	
\item[G2 (Locally non-constant).] 
    For all $r\in I$, $\e>0$, and $x\in \{\G=r\}$, there exist $x^-\in \{\G<r\}$ and 
    $x^+\in \{\G>r\}$ such that $\hnorm{x-x^-}\leq \e$ and $\hnorm{x-x^+}\leq \e$.

\item[G3 (Locally non-constant continuous extension).] 
    We have a $\enorm\cdot$-continuous extension $\hat \G: \overline B\to \R$  of $\G$ such that 
    for all $r\in I$, $\e>0$, and $x\in \{\hat \G=r\}$, there exist $x^-\in \{\hat \G<r\}$ and 
    $x^+\in \{\hat \G>r\}$ with $\enorm{x-x^-}\leq \e$ and $\enorm{x-x^+}\leq \e$.    
    
\end{description}

By Lemma \ref{F-is-in-kernel} we know that 
for all $x_1,x_2\in B$ we have $x_1-x_2\in F$ and thus 
$\fnorm{x_1-x_2} = \hnorm{x_1-x_2}$. Consequently, the assumed $\fnorm\cdot$-continuity in \textbf{G1} is 
well-defined and 
equivalent to  $\hnorm\cdot$-continuity. Moreover, if \textbf{B1} and \textbf{B2} are satisfied, then 
$\fnorm\cdot$ dominates $\enorm\cdot$, and therefore \textbf{G1*} implies \textbf{G1} in this case.

At first glance the convexity of the level sets  and the continuity of $\G$ are conceptually  simple 
assumptions.
When combined, however, they have a significant impact on the shape of $\G$ and its level sets.
To illustrate this 
 let us 
 recall that a function $\G:B\to
\R$ defined on some convex subset $B\subset E$ of a vector space $E$ is
called \emph{quasi-convex}, if, for all $r\in \R$, 
the \emph{sublevel sets} $\{\G\leq r\}$ are convex.  
It is well-known, see \cite{GrPi71a} for some historic remarks,
and also a simple exercise 
that $\G$ is quasi-convex, if and only if
\begin{equation}\label{def-quasi-conv}
  \G\bigl((1 - \a) x + \a y\bigr) \leq \max \bigl\{\G(x),\G(y)\bigr\}
\end{equation}
holds for all $x,y\in B$ and $\a\in [0,1]$. We further say that $\G$ is \emph{strictly quasi-convex},
if, in addition, this inequality is strict for all $x,y\in B$ with $\G(x) \neq \G(y)$ and all $\a\in (0,1)$. 
Analogously, $\G$ is called
\emph{(strictly) quasi-concave}, if $-\G$ is (strictly) quasi-convex.  Finally, $\G$ is called
\emph{(strictly) quasi-monotone}, if $\G$ is both (strictly) quasi-convex and (strictly) 
quasi-concave. It can be shown, that 
$\G$ is quasi-monotone, if and only if $\G$ is monotone on each segment, see Lemma \ref{quasi-monotone},
and if $\G$ is continuous, quasi-monotonicity is also equivalent to the convexity of all level sets, see Lemma \ref{quasi-m-char}.
Consequently, if \textbf{G1} or \textbf{G1*} is satisfied, then its  level sets cannot, for example, 
form an alveolar partition of $B$ or a triangulation partition,
since both would contradict the convexity of the sublevel sets.
We refer to \cite{LambertXXa} for some nice illustrations.
\emph{Without} the continuity, however, such  partitions would be perfectly fine.

Assumption \textbf{G2} essentially states that $\G$ is not constant on arbitrarily small balls $B\cap \e B_H$, where the used 
norm $\hnorm\cdot$ is typically larger than $\enorm\cdot$, that is, the considered balls $\e B_H$ are smaller than 
the balls $\e B_E$. In particular, if a larger norm $\fnorm \cdot$ is required to ensure \textbf{B2*}, then 
in turn this choice  leads to a stronger version of \textbf{G2}.

Finally, \textbf{G3} will be used to extend results for $B$ to $\overline B$.
This will particularly useful if the set $B$ is only an auxiliary set in the sense that we are actually 
interested in $\overline B$, instead. For example, in \eqref{Dgeq0} we only considered bounded densities
to ensure \textbf{B2*}. In general, however, one might be interested in all probability densities, that is, in the set 
$\overline { \D^{\geq 0}}$. Now \textbf{G3} essentially states that if we actually have a continuous functional 
on $\overline B$ then we need a weak version of \textbf{G2} on $\overline B\setminus B$, too.

Before we present our first main results, let us finally  introduce the following definition, which formally describes 
the functionals   we seek.

\begin{definition}
   Let $E$ be a normed space, $\G:B\to \R$ be a map and $I:= \mathring{\G(B)}$.
   Then, a family $(z_r')_{r\in I}$ of linear maps $z_r':E\to \R$ is called a separating family for $\G$,
   if for all $r\in I$ we have 
   \begin{align}\label{unique-separation-a1}
 \{\G<r\} &= \{z_r'< 0\} \cap B \\ \label{unique-separation-a2}
\{\G=r\} &= \{z_r'= 0\} \cap B\\ \label{unique-separation-a3}
\{\G>r\} &= \{z_r'>0\} \cap B\, .
\end{align}
\end{definition}

Note that in the definition above the maps $z_r'$ are not necessarily continuous. In the following, however, 
all obtained separating families will consist of continuous functionals, but depending on the situation, 
the continuity will be with respect to either $\hnorm\cdot$ or $\enorm\cdot$.

The following result characterizes the existence of a separating family in $H'$.

\begin{theorem}\label{unique-separation-char-1}
Let \textbf{B1} and \textbf{B2*} be satisfied and 
 $\G:B\to \R$ be a map. We write $I:= \mathring{\G(B)}$ and $B_0 := \G^{-1}(I)$. Then the following statements are equivalent:
\begin{enumerate}
   \item Assumptions \textbf{G1} and \textbf{G2} are satisfied.
   \item The map $\G$ is $\fnorm\cdot$-continuous and quasi-monotone. Moreover, $\G_{|B_0}$ is strictly quasi-monotone.
   \item There exists a  separating family $(z_r')_{r\in I} \subset H'$ for $\G$.
   \item There exists a unique separating family $(z_r')_{r\in I} \subset H'$ for $\G$ with $\snorm{z_r'}_{H'} = 1$ for all $r\in I$.
\end{enumerate}
\end{theorem}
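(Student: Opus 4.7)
My plan is to establish the cycle $(1) \Leftrightarrow (2) \Rightarrow (4) \Rightarrow (3) \Rightarrow (1)$. The equivalence $(1) \Leftrightarrow (2)$ will follow from the auxiliary results of Section \ref{sec:quasi-monotone}: Lemma \ref{quasi-m-char} gives that for a $\fnorm\cdot$-continuous map on the convex set $B$, quasi-monotonicity is equivalent to convexity of all level sets, while strict quasi-monotonicity of $\G|_{B_0}$ is precisely the segment-wise rephrasing of \textbf{G2} once quasi-monotonicity is in hand. The implication $(4) \Rightarrow (3)$ is immediate.

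The constructive step $(2) \Rightarrow (4)$ is the main work. Fix $r \in I$ and pass to $F$ by translating with $\xstar$: the sets $A_r^+ := -\xstar + \{\G > r\}$ and $A_r^- := -\xstar + \{\G < r\}$ are convex and disjoint. Combining \textbf{B2*} with the standard starlike property $\{(1-t)\xstar + ty : t \in [0,1)\} - \xstar \subset \relint AF$ for every $y \in B$, I will verify that both $A_r^\pm$ meet $\relint AF$ and hence have nonempty $F$-interior, so geometric Hahn--Banach yields $\z_r \in F'$ and $c_r \in \R$ with $\z_r > c_r$ on $A_r^+ \cap \relint AF$ and $\z_r < c_r$ on $A_r^- \cap \relint AF$. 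Extending to $H$ by $z_r'(y + \alpha \xstar) := \z_r(y) - \alpha c_r$ and normalizing produces a candidate $z_r' \in H'$. Upgrading the weak Hahn--Banach inequalities to the three identities \eqref{unique-separation-a1}--\eqref{unique-separation-a3} is subtle: if some $a \in A_r^-$ satisfied $\z_r(a) = c_r$, then continuity of $\G$ along the segment from $a$ to any $a' \in A_r^+ \cap \relint AF$ would force a point where $\G = r$ yet $\z_r > c_r$ by linearity, contradicting that the level set sits in $\overline{A_r^-} \cap \overline{A_r^+} \subset \{\z_r = c_r\}$. This is where strict quasi-monotonicity on $B_0$ genuinely enters.

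For uniqueness in (4), I will first locate $y_0 \in \{\G = r\}$ with $y_0 - \xstar \in \relint AF$: since $r \in I$, an intermediate-value argument along the segment from $\xstar$ to a point $y \in B$ with $\G(y)$ on the opposite side of $r$ from $\G(\xstar)$ produces such a $y_0$, kept in the $F$-interior by the starlike property. Then, for two normalized separating functionals $z_1, z_2$ at level $r$ with matching sign and any $v \in \ker z_1 \cap F$, the segment $y_0 + tv$ lies in $B$ for small $|t|$, so $\G(y_0 + tv) = r$ and hence $z_2(v) = 0$. Thus $\ker z_1 \cap F = \ker z_2 \cap F$; neither $z_i|_F$ can vanish (else $\{z_i = 0\} \cap B$ would be all of $B$ or empty, impossible for $r \in I$), so $z_1|_F = \lambda z_2|_F$, which propagates through $y_0$ to all of $H$ via Lemma \ref{F-is-in-kernel} and pins down $\lambda = 1$ by normalization plus the sign convention.

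For $(3) \Rightarrow (1)$, convexity of $\{\G = r\}$ is immediate for $r \in I$ as the intersection of $\{z_r' = 0\}$ with convex $B$, and extends to boundary levels of $\im \G$ by writing $\{\G \leq r\} = \bigcap_{r' > r,\, r' \in I}\{z_{r'}' \leq 0\} \cap B$. For $\fnorm\cdot$-continuity of $\G$ at $x$ with $\G(x) = r$: if $x_n \to x$ in $\fnorm\cdot$ but $\G(x_n) \to s \neq r$, I pick $r' \in I$ strictly between $r$ and $s$; then $z_{r'}'(x)$ and $z_{r'}'(x_n)$ have opposite signs for large $n$, contradicting continuity of $z_{r'}' \in H'$. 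Finally, \textbf{G2} follows because $z_r'|_F$ is nonzero, so small $F$-perturbations of any $x \in \{\G = r\}$ (kept inside $B$ by the $F$-interior structure supplied by \textbf{B2*}) furnish nearby points on both sides of the separating hyperplane and hence of the level set. The hardest step throughout will be the strict-separation upgrade in the Hahn--Banach construction, as this is the only place where \textbf{G2} and the $F$-interior of \textbf{B2*} interact nontrivially, and it is the conceptual core of the whole equivalence.
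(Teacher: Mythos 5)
Your overall architecture (translate by $\xstar$, separate in $F$ via Hahn--Banach, extend to $H = F \oplus \R\xstar$, use relative $F$-interiors supplied by \textbf{B2*}, and pin down uniqueness through kernels of the separating functionals) closely mirrors the paper's own route, which runs through Lemmas \ref{weak-separation-in-A}, \ref{relative-interiors}, \ref{all-interirors}, Proposition \ref{strong-separation-in-A} and Theorem \ref{level-set-identification-in-H}. The difference is the order of implications: the paper proves i) $\Rightarrow$ iv) $\Rightarrow$ iii) $\Rightarrow$ ii) $\Rightarrow$ i), building the separating family directly from \textbf{G1}, \textbf{G2}, and only afterwards deducing strict quasi-monotonicity; you instead propose to establish (1) $\Leftrightarrow$ (2) first and then construct from (2).

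The gap is in (1) $\Rightarrow$ (2), and it is a logical gap in your cycle, not merely a cosmetic one. You assert that ``strict quasi-monotonicity of $\G|_{B_0}$ is precisely the segment-wise rephrasing of \textbf{G2} once quasi-monotonicity is in hand,'' citing only the quasi-monotonicity lemmas of Section~\ref{sec:quasi-monotone}. But those lemmas give (G1 $\Leftrightarrow$ quasi-monotone) and (2) $\Rightarrow$ (1); they do not give \textbf{G2} $\Rightarrow$ strict quasi-monotonicity. If $\G \equiv r_0$ on $[x_0, x_{t_0}]$ and $\G > r_0$ thereafter, \textbf{G2} alone supplies nearby points above and below $r_0$, but extracting a contradiction requires a separating functional: one takes $K_\pm := -\xstar + \{\G \gtrless r_0\}$, applies the Hahn--Banach separation of Lemma \ref{weak-separation-in-A} to get $y' \in F'$ and $s$, uses \textbf{G2} to force $-\xstar + \{\G = r_0\} \subset \{y' = s\}$, and then linearity of $y'$ along $[x_0, x_1]$ contradicts $y'(-\xstar + x_{t_0}) = s$. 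In other words, (1) $\Rightarrow$ (2) already needs the separation machinery, and since your constructive arrow (2) $\Rightarrow$ (4) is predicated on having (2), the cycle as drawn does not close without it. The cleanest repair, and the paper's, is to do (1) $\Rightarrow$ (4) directly from \textbf{G1}, \textbf{G2} (Theorem \ref{level-set-identification-in-H}) and deduce strict quasi-monotonicity afterwards from the separating family.

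A second, smaller issue is in your (3) $\Rightarrow$ (1) derivation of \textbf{G2}: you claim small $F$-perturbations of an arbitrary $x \in \{\G = r\}$ stay inside $B$ ``by the $F$-interior structure supplied by \textbf{B2*}.'' But \textbf{B2*} only gives $0 \in \relint AF$, not $x - \xstar \in \relint AF$ for every $x$ on the level set, and shrinking $x$ toward $\xstar$ leaves the level set unless $\G(\xstar) = r$. The correct device is Lemma \ref{all-interirors}: pick $x_0 \in \{\G = r\}$ with $x_0 - \xstar \in \relint AF$, set $y_\d := \d x_0 + (1-\d)x$, observe $y_\d \in \{z_r' = 0\} \cap B = \{\G = r\}$ and $y_\d - \xstar \in \relint AF$, and perturb $y_\d$; this is in spirit what the paper's iii) $\Rightarrow$ ii) $\Rightarrow$ i) chain accomplishes via strict quasi-monotonicity. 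Your uniqueness argument for (4) is sound and essentially equivalent to Lemmas \ref{kernel-is-determined} and \ref{equal-modulo-orientation}.
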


Theorem \ref{unique-separation-char-1} shows  that under the  assumptions \textbf{B1} and \textbf{B2*}
on $B$, the conditions \textbf{G1} and \textbf{G2} on $\G$ are both necessary and sufficient for 
the existence of a separating family  in $H'$. In addition, it shows that the only freedom for choosing this family is
the scaling of its members. In combination with Lemma \ref{quasi-m-char} we 
finally see that \textbf{G2} can be replaced by the \emph{norm-independent} 
strict quasi-monotonicity of $\G_{|B_0}$.

Our next goal is to present a similar characterization for separating functionals that are $\enorm\cdot$-continuous. 
To this end, we write 
 $\snorm{z'}_{E'} = 1$ for the norm
of a functional $z'\in (H, \enorm\cdot)'$. 

\begin{theorem}\label{unique-separation-char-2}
Let \textbf{B1}, \textbf{B2*}, \textbf{B3} be satisfied and 
 $\G:B\to \R$ be a map. We write $I:= \mathring{\G(B)}$ and $B_0 := \G^{-1}(I)$. Then the following statements are equivalent:
\begin{enumerate}
   \item Assumptions \textbf{G1*} and \textbf{G2} are satisfied.
   \item The map $\G$ is $\enorm\cdot$-continuous and quasi-monotone. Moreover, $\G_{|B_0}$ is strictly quasi-monotone.
   \item There exists a  separating family $(z_r')_{r\in I} \subset (H, \enorm\cdot)'$ for $\G$.
   \item There exists a unique separating family $(z_r')_{r\in I} \subset (H, \enorm\cdot)'$ for $\G$ with $\snorm{z_r'}_{E'} = 1$ for all $r\in I$.
\end{enumerate}
Moreover, if  condition iv) is true and 
 \textbf{B4} is also satisfied, then, for all $r\in I$,  
there exists exactly one $\hat z_r'\in E'$ such that $(\hat z_r')_{|H} = z_r'$
and  $\snorm{\hat z_r'}_{E'} = 1$.
\end{theorem}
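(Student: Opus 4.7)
The plan is to reduce as much as possible to Theorem \ref{unique-separation-char-1} and then use \textbf{B3} to upgrade the separating functionals from $\hnorm\cdot$-continuity to $\enorm\cdot$-continuity. The equivalence (i) $\Leftrightarrow$ (ii) is proved verbatim as in Theorem \ref{unique-separation-char-1}: continuity together with convex level sets is equivalent to quasi-monotonicity by Lemma \ref{quasi-m-char}, and \textbf{G2} corresponds to strict quasi-monotonicity on $B_0$, both characterizations being insensitive to whether continuity is taken in $\enorm\cdot$ or $\fnorm\cdot$. The equivalence (iii) $\Leftrightarrow$ (iv) is merely a matter of normalization, so what remains is to close the loop between (i)-(ii) and (iii)-(iv).

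For (i) $\Rightarrow$ (iv), since $\enorm\cdot\leq\fnorm\cdot$ on $F$, \textbf{G1*} implies \textbf{G1}, and Theorem \ref{unique-separation-char-1} already furnishes the unique $(z_r')_{r\in I}\subset H'$ with $\snorm{z_r'}_{H'}=1$. The missing step is to show that each $z_r'$ is in fact $\enorm\cdot$-continuous, and this is where \textbf{B3} enters. For $z\in H$, decompose $z=z^+-z^-$ with $z^\pm\in\cone B$ and $\enorm{z^+}+\enorm{z^-}\leq K\enorm z$, write $z^\pm=\a_\pm x^\pm$ with $x^\pm\in B$, $\a_\pm\geq 0$, and note
\[ |z_r'(z)| \Leq \a_+|z_r'(x^+)| + \a_-|z_r'(x^-)|. \]
Thus it suffices to prove that the restriction of $z_r'$ to $B$ satisfies $|z_r'(x)|\leq C\enorm x$ for some constant $C$, since then $|z_r'(z)|\leq C(\enorm{z^+}+\enorm{z^-})\leq CK\enorm z$. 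I would establish this bound by re-running the Hahn-Banach separation step of Theorem \ref{unique-separation-char-1} directly in $(H,\enorm\cdot)$: \textbf{G1*} makes $\{\G<r\}$ and $\{\G>r\}$ disjoint convex sets that are $E$-open in $B$, and \textbf{B3} together with \textbf{B2*} supplies enough geometric room to exhibit an $\enorm\cdot$-continuous separating functional, which by the uniqueness-up-to-scaling part of Theorem \ref{unique-separation-char-1} must be a positive multiple of $z_r'$.

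The reverse direction (iv) $\Rightarrow$ (ii) is short: $\enorm\cdot$-continuity of $\G$ follows by sandwiching, since for $x\in B$ with $\G(x)=r$ and chosen $s<r<t$ in $I$, the $\enorm\cdot$-continuity of $z_s',z_t'$ yields an $E$-neighborhood of $x$ where $s<\G<t$; convex level sets are $\{z_r'=0\}\cap B$; and strict quasi-monotonicity on $B_0$ is obtained by moving along the segment from $x\in\{\G=r\}$ to points $y\in B$ with $\G(y)\gtrless r$ (which exist because $r\in I$) and observing that $z_r'$ varies linearly on that segment. For the Moreover clause, \textbf{B4} makes $H$ dense in $E$, so each $z_r'\in(H,\enorm\cdot)'$ admits a unique continuous extension $\hat z_r'\in E'$ of the same norm, and any other norm-preserving element of $E'$ restricting to $z_r'$ on $H$ must agree with $\hat z_r'$ on the dense set $H$ and hence on all of $E$ by continuity.

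The main obstacle is the $\enorm\cdot$-boundedness upgrade in (i) $\Rightarrow$ (iv): one needs to replay the separation geometry in the weaker norm $\enorm\cdot$, which is precisely the role of \textbf{B3}, as \textbf{B2*} on its own controls only the $F$-interior.
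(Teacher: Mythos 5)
Your overall structure is reasonable and several pieces are fine — the (iii) $\Leftrightarrow$ (iv) normalization, the sandwiching argument for (iv) $\Rightarrow$ (ii), and the Hahn-Banach/density step for the final \textbf{B4} extension clause. The reduction in (i) $\Rightarrow$ (iv) that $\enorm\cdot$-continuity of $z_r'$ follows from a bound $|z_r'(x)|\leq C\enorm x$ on $B$ via the \textbf{B3} cone decomposition is also correct in spirit (the paper packages the same idea as Lemma \ref{continuity-E-test}).

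However, there is a genuine gap at the very step you flag as the ``main obstacle'': you propose to establish $|z_r'(x)|\leq C\enorm x$ on $B$ by re-running the Hahn--Banach separation of Theorem \ref{unique-separation-char-1} in $(H,\enorm\cdot)$, claiming that \textbf{B2*} and \textbf{B3} supply the needed geometric room. This does not work. The separation argument needs a non-empty interior of $A$ in the topology in which one separates, and \textbf{B2*} only guarantees $0 \in \relint AF$, i.e.\ $F$-interior with respect to the possibly much stronger norm $\fnorm\cdot$; nothing in \textbf{B2*} or \textbf{B3} gives an $\enorm\cdot$-interior point of $A$. Indeed the paper's motivating example \eqref{Dgeq0} is a case where the $\enorm\cdot$-interior of $A$ is empty and the entire point of introducing $\fnorm\cdot$ is to escape precisely this problem — so any plan that requires the separation to go through in $(F,\enorm\cdot)$ must fail there. \textbf{B3} is a cone decomposition property; it simplifies the continuity test but does not create interior points.

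What the paper actually does (second part of the proof of Theorem \ref{level-set-identification-in-H}) is entirely different: it takes the $z'\in H'$ already built in $(F,\fnorm\cdot)$, assumes for contradiction that it is not $\enorm\cdot$-continuous, extracts via Lemma \ref{continuity-E-test} a sequence $(z_n)\subset\cone B$ with $\enorm{z_n}\to 0$ and $\langle z',z_n\rangle < -1$, and then exploits \textbf{B1} (to show the scalar factors $\a_n$ in $z_n=\a_n x_n$ tend to $0$), the convexity of $B$ (to form $\b_n x_0 + (1-\b_n)x_n\in B$), and crucially the $\enorm\cdot$-continuity of $\G$ from \textbf{G1*} at a point $x_0\in\{\G>r\}$ to reach a contradiction with the sign of $z'$ on this convex combination. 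You never use \textbf{G1*} in your continuity upgrade, which is a strong hint that the plan is missing the essential ingredient. One further minor point: your direct claim that (i) $\Leftrightarrow$ (ii) is ``verbatim'' glosses over the fact that the implication from \textbf{G2} to strict quasi-monotonicity of $\G_{|B_0}$ is established in the paper only via the separating family (the linear functional makes the segment argument work), not by a self-contained argument; going around the full chain i) $\Rightarrow$ iv) $\Rightarrow$ iii) $\Rightarrow$ ii) $\Rightarrow$ i), as the paper does, is the safe route.
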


When we apply Theorem \ref{unique-separation-char-1} to the example discussed around \eqref{Dgeq0},
we see that there is a (unique) family of separating hyperplanes
$(\hat z_r')_{r\in I}\subset \Lx{p'}\mu$
for $\G$ \emph{if and only if} our property $\G$
is  $\snorm\cdot_p$-continuous, quasi-monotone, and even strictly quasi-monotone on $B_0$.
This is, to the best of our knowledge, the first characterization when the part around \eqref{osband}
of Osband's principle does work.
Nonetheless,
we like to mention that the implications $i) \Rightarrow iv)$ of 
Theorems \ref{unique-separation-char-1} and \ref{unique-separation-char-2}  have already been shown in the 
unreviewed appendix of \cite{StPaWiZh14a}. However, the remaining implications are new and so is the following 
third and last result in this section.

\begin{theorem}\label{unique-extended-separation}
 Assume that \textbf{B1}, \textbf{B2*}, \textbf{B3}, \textbf{B4}, \textbf{G1*}, \textbf{G2}, and \textbf{G3} are satisfied, and let 
$(\hat z_r')_{r\in I}\subset E'$ be the separating family   found in Theorem \ref{unique-separation-char-2}.
Then, for all $r\in I$, we have
\begin{align*}
 \{\hat \G<r\} &= \{\hat z_r'< 0\} \cap \overline B \\
\{\hat \G=r\} &= \{\hat z_r'= 0\} \cap \overline B\\ 
\{\hat \G>r\} &= \{\hat z_r'>0\} \cap \overline B\, .
\end{align*}
\end{theorem}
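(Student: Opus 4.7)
The plan is to prove the three equalities for a fixed $r \in I$ by establishing three directions: (i) the weak implication $\hat\G(x) > r \Rightarrow \hat z_r'(x) \geq 0$ (and symmetrically for $<$), (ii) the level set inclusion $\{\hat\G = r\} \cap \overline B \subset \{\hat z_r' = 0\}$, and (iii) the reverse inclusion $\{\hat z_r' = 0\} \cap \overline B \subset \{\hat\G = r\}$. From these three, all strict inequalities and the claimed set equalities will follow by a short case analysis. The ingredients I will use are the $\enorm\cdot$-continuity of $\hat z_r'$ on $E$, the $\enorm\cdot$-continuity of $\hat\G$ on $\overline B$ given by \textbf{G3}, the separation property of Theorem \ref{unique-separation-char-2} on $B$, and the convexity of $\overline B$.

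For direction (i), I would fix $x \in \overline B$ with $\hat\G(x) > r$, pick $x_n \in B$ with $x_n \to x$ in $\enorm\cdot$, and use continuity of $\hat\G$ to conclude $\G(x_n) = \hat\G(x_n) \to \hat\G(x) > r$, so eventually $\G(x_n) > r$. Theorem \ref{unique-separation-char-2} then gives $\hat z_r'(x_n) = z_r'(x_n) > 0$, and continuity of $\hat z_r'$ yields $\hat z_r'(x) \geq 0$; the case $\hat\G(x) < r$ is symmetric. For direction (ii), I would fix $x \in \overline B$ with $\hat\G(x) = r$ and apply \textbf{G3} with $\e_n \to 0$ to produce sequences $x_n^+ \in \{\hat\G > r\}$ and $x_n^- \in \{\hat\G < r\}$ that converge to $x$ in $\enorm\cdot$; applying (i) to each and passing to the limit via continuity of $\hat z_r'$ gives both $\hat z_r'(x) \geq 0$ and $\hat z_r'(x) \leq 0$, hence $\hat z_r'(x) = 0$.

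The main obstacle, and the heart of the argument, is direction (iii). I would fix $x \in \overline B$ with $\hat z_r'(x) = 0$ and assume for contradiction that $\hat\G(x) \neq r$, say $\hat\G(x) > r$ (the case $\hat\G(x) < r$ is symmetric). Continuity of $\hat\G$ yields an $\enorm\cdot$-neighborhood $U$ of $x$ in $\overline B$ on which $\hat\G > r$. Since $I$ is an open interval (convex combinations on line segments in $B$ show $\G(B)$ is an interval), there exists $s \in I$ with $s < r$, and therefore a point $y \in B$ with $\G(y) = s < r$; by Theorem \ref{unique-separation-char-2} this forces $\hat z_r'(y) = z_r'(y) < 0$. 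Consider the segment $x_t := (1 - t) x + t y$ for $t \in [0, 1]$: convexity of $\overline B$ yields $x_t \in \overline B$, and for all sufficiently small $t > 0$ one has $x_t \in U$, hence $\hat\G(x_t) > r$, and then (i) applied to $x_t$ gives $\hat z_r'(x_t) \geq 0$. On the other hand, linearity of $\hat z_r'$ together with $\hat z_r'(x) = 0$ yields $\hat z_r'(x_t) = t\,\hat z_r'(y) < 0$ for $t > 0$, the desired contradiction. Rolling $y$ to a point with $\G(y) > r$ rules out $\hat\G(x) < r$ analogously.

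Combining (i)--(iii): from (ii) and (iii) one has $\{\hat\G = r\} \cap \overline B = \{\hat z_r' = 0\} \cap \overline B$. For $x \in \overline B$ with $\hat\G(x) > r$, direction (i) gives $\hat z_r'(x) \geq 0$, and equality to zero would force $\hat\G(x) = r$ by (iii), so $\hat z_r'(x) > 0$. Conversely, if $\hat z_r'(x) > 0$, then $\hat\G(x) < r$ is excluded by (i) and $\hat\G(x) = r$ is excluded by (ii), so $\hat\G(x) > r$. The $<$ case is symmetric, which completes the three equalities of Theorem \ref{unique-extended-separation}. The only non-routine step is (iii), where the linearity of $\hat z_r'$ combined with the freedom to choose an opposite-sign companion $y \in B$ and the simple convex-combination stability $x_t \in \overline B$ supplies the contradiction.
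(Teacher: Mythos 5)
Your proof is correct, and it takes a genuinely different route from the paper's.

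The paper builds the argument out of the closure lemmas in Section~\ref{sec:quasi-monotone}: Lemma~\ref{closure-of-levels-lemma-2} (which itself invokes Lemma~\ref{closure-of-levels-lemma-1} and \textbf{G3}) identifies $\{\hat\G\geq r\}$ with $\overline{\{\G\geq r\}}^E$, and then Lemma~\ref{closure-for-strict-mono} is applied to the linear functional $\hat\Upsilon:=(\hat z_r')_{|\overline B}$ to identify $\{\hat z_r'\geq 0\}\cap\overline B$ with $\overline{\{z_r'\geq 0\}\cap B}^E$; the three equalities then drop out by comparing these two descriptions for $\geq$ and $\leq$. Your argument avoids the closure machinery entirely and works directly at the level of the target set identities: (i) weak sign preservation via approximation from $B$ and continuity of $\hat z_r'$ and $\hat\G$, (ii) vanishing on $\{\hat\G=r\}$ via \textbf{G3}, and (iii) the converse implication by a line-segment perturbation toward a point $y\in B$ with $z_r'(y)$ of opposite sign, which is exactly where linearity of $\hat z_r'$ together with the openness of the set $\{\hat\G>r\}$ in $\overline B$ produces the contradiction. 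Each step I checked is sound: the limit arguments in (i) and (ii) are elementary, the existence of the companion point $y$ in (iii) follows from $r\in I=\mathring{\im\G}$, $x_t\in\overline B$ follows from convexity of $\overline B$, and the concluding case analysis is watertight because $\hat\G$ is defined precisely on $\overline B$. The trade-off: the paper's route factors out two reusable closure lemmas about strictly quasi-monotone extensions and keeps the proof of the theorem itself very short, whereas your route is more self-contained and makes the mechanism visible without appealing to Lemma~\ref{closure-for-strict-mono}. Either is a legitimate and complete proof.
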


Theorem \ref{unique-extended-separation} essentially shows that a separating family 
for $\G$  in $E'$
is also a separating family for a continuous extension $\hat \G$ satisfying \textbf{G3}. Here we note that 
\textbf{G3} can again be replaced by a strict quasi-convexity assumption. Moreover,
a family satisfying the three equalities in Theorem \ref{unique-extended-separation} 
is a separating family of $\G$, and therefore, the implications of Theorem \ref{unique-separation-char-2} apply.
In particular, if such a family exists, then 
$\G$ needs to be $\enorm\cdot$-continuous and quasi-monotone, and $\G_{|B_0}$ needs to be strictly quasi-monotone.
Moreover, by repeating the arguments used in the proof of Theorem \ref{unique-separation-char-1}
we see that even $\hat \G$ needs to be $\enorm\cdot$-continuous and quasi-monotone.

Finally, let us again have a quick look at the example discussed around \eqref{Dgeq0}. 
Here we see that the part around \eqref{osband}
of Osband's principle works, if, for example, $p=1$ and $\G$ is a property on the set of all $\mu$-absolutely 
continuous probability measures that is continuous with respect to the total variation norm and satisfies the 
(strict) quasi-monotonicity assumptions discussed above. As far as we know, this is the first such result for 
probability measures not having a bounded density.

\section{Measurable Dependence of the Separating Hyperplanes}\label{sec:meas-sep}

In the previous section we have see that under some 
conditions on both $B$ and $\G$ we have a unique family of separating hyperplanes $(\hat z_r')_{r\in I}\subset E'$.
Our goal in this section is to investigate under which supplemental  assumptions the resulting 
map $r\mapsto \hat z_r'$ is measurable.

To this end, we will, 
consider the 
following two additional assumptions:
\begin{description}[itemsep=-.5ex]
\item[B5 (Completeness and separable dual).] The space $E$ is a Banach space and its dual $E'$ is separable.

\item[G4 (Measurability).]  The pre-image $B_0:= \G^{-1}(I)$ is a  Borel measurable subset of $E$. 
\end{description}

Before we discuss these assumptions, we like to present the main result of this section, which
shows that the map $r\mapsto \hat z'_r$ is measurable provided that \textbf{B5} and \textbf{G4} hold.
%
To formulate it, we write $\ca B(X)$ for the Borel $\s$-algebra of a given topological space $X$.
Moreover, we equip the interval $I$ with the Lebesgue
 completion $\hat{\ca B}(I)$ of its Borel $\s$-algebra $\ca B(I)$.

\begin{theorem}\label{strong-measurable-and-int}
Assume that \textbf{B1} to \textbf{B5}, as well as \textbf{G1*}, \textbf{G2}, and  \textbf{G4} are satisfied.
Then, the map 
$Z:I\to E'$ defined by 
\begin{displaymath}
 Z(r):=  \hat z_r' \, ,
\end{displaymath}
where $\hat z_r'\in E'$ are the unique functionals obtained in Theorem \ref{unique-separation-char-2},
is measurable with respect to the $\s$-algebras $\hat{\ca B}(I)$ and $\ca B(E')$, 
and it is also 
an $E'$-valued 
measurable function in the sense of Bochner 
integration theory with respect to the $\s$-algebra $\hat{\ca B}(I)$.
\end{theorem}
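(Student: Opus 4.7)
The plan is to reduce the two measurability assertions to a single Borel measurability statement and then establish it via a graph-measurability argument. Since $E'$ is separable by \textbf{B5}, the Pettis measurability theorem makes Bochner (strong) measurability of $Z$ equivalent to its norm-Borel measurability into $E'$: the ``essentially separably valued'' hypothesis is automatic, and for a separable Banach codomain strong measurability coincides with Borel measurability. It therefore suffices to prove $Z^{-1}(U)\in\hat{\ca B}(I)$ for every $U\in\ca B(E')$.

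The core step is to exhibit the graph of $Z$ as a Borel subset of the Polish space $I\times E'$. Separability of $E'$ forces separability of $E$ and hence of the Borel set $B_0\subset E$ supplied by \textbf{G4}; fix a countable dense sequence $\{b_n\}\subset B_0$. Writing $S_{E'}:=\{z'\in E':\snorm{z'}_{E'}=1\}$, the claim is
\begin{equation*}
\Gr(Z)=(I\times S_{E'})\cap\bigcap_{n\in\N}\bigl\{(r,z')\in I\times E':\sign\langle z',b_n\rangle=\sign\bigl(\G(b_n)-r\bigr)\bigr\}.
\end{equation*}
Each of the $n$-indexed sets is Borel, being a disjoint union of three rectangles built from the continuous maps $r\mapsto\G(b_n)-r$ and $z'\mapsto\langle z',b_n\rangle$, and $S_{E'}$ is closed in $E'$. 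The inclusion $\subset$ follows directly from the separating identities of Theorem \ref{unique-separation-char-2}. For the reverse inclusion, continuity of $\G$ and of $z'$, together with density of $\{b_n\}$ in $B_0$ and of $B_0$ in $B$ (the latter itself a consequence of \textbf{G2}), propagate the sign equivalences to non-strict separating inequalities on all of $B$, while the unit-norm condition upgrades them to the strict separating identities required by Theorem \ref{unique-separation-char-2}, whose uniqueness clause then forces $z'=\hat z_r'$. With the graph Borel, the classical projection theorem in Polish spaces shows that $Z^{-1}(U)=\pi_I\bigl(\Gr(Z)\cap(I\times U)\bigr)$ is analytic in $I$ and hence lies in the Lebesgue completion $\hat{\ca B}(I)$ for every $U\in\ca B(E')$.

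The main obstacle is the strict-separation upgrade in the reverse inclusion. Dense-subset sign information combined with continuity only yields non-strict inequalities, and the unit-norm normalization alone does not visibly preclude a candidate $z'$ vanishing on interior points of $\{\G>r\}\cap B$ or $\{\G<r\}\cap B$. Ruling this out invokes the full geometric package of \textbf{B1}, \textbf{B2*}, \textbf{B4} and Lemma \ref{F-is-in-kernel}: if $z'$ were to vanish on the relative interior of $B$ in $F$, then linearity and the open-minimum principle would force $z'$ to vanish on all of $F$, and the decomposition $H=F\oplus\R\xstar$ followed by $E$-density would make $z'$ a scalar multiple of $\p'$, contradicting that $\p'$ is constantly $1$ on $B$ while $z'$ must change sign across $\{\G=r\}\cap B$. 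Consequently there is a relative-interior point of $B$ on which $z'$ is nonzero, and a line-segment argument anchored at such a point extends strict separation from the relative interior to the whole of $B$, using \textbf{G2} at the boundary. This extension is the only place where the geometric hypotheses on $B$ are used in essential generality, and it is also where the technical weight of the proof concentrates.
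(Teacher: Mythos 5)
Your approach is genuinely different from the paper's: the paper constructs a functional $\Psi(r)=\inf_{z'\in S^+}\sup_{x\in\{\G=r\}}|\langle z',x\rangle|$, establishes measurability of the inner supremum via Carath\'eodory-type arguments, and then invokes Aumann's measurable selection principle (Theorem \ref{measurable-selection}) to extract a measurable minimizer which is then identified with $\pm\hat z_r'$ via Lemma \ref{unique-minimizer}. Your graph-characterization route, if it worked, would sidestep measurable selection entirely and is arguably more direct.

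However, your argument for the reverse inclusion of the claimed graph identity has two defects. First, the assertion that $B_0$ is dense in $B$ ``as a consequence of \textbf{G2}'' is false: \textbf{G2} only speaks about points $x$ with $\G(x)\in I$ and gives no information near boundary level sets. A simple two-dimensional example shows the failure: take $B=\{(1,y):-1\leq y\leq1\}$, $\G(1,y)=\max\{0,y\}$; then all of \textbf{B1}--\textbf{B5}, \textbf{G1*}, \textbf{G2}, \textbf{G4} hold, but $B_0=\{(1,y):0<y<1\}$ whose closure misses the entire half-segment $\{(1,y):y<0\}$. So the intermediate step ``propagate the sign equivalences to non-strict inequalities on all of $B$'' is unavailable. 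Second, the subsequent ``strict-separation upgrade'' is both unnecessary and, as described, unconvincing: the unit-norm condition does not by itself convert non-strict inequalities into strict ones, and the argument sketched (open-minimum principle, $z'$ becoming a multiple of $\p'$, a line-segment argument anchored at a relative-interior point) rules out only the degenerate case $z'_{|H}=0$, not the genuine danger that $z'$ vanishes somewhere on $\{\G>r\}\cap B$ outside $B_0$.

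The fix is to bypass the attempted propagation to all of $B$ entirely. From the sign conditions on the dense family $\{b_n\}\subset B_0$, use \textbf{G2} together with $\enorm\cdot$-continuity of $\G$ to show $\{\G=r\}\subset\ker z'$ (note $\{\G=r\}\subset B_0$ since $r\in I$, so the density argument does apply here). Then invoke Lemma \ref{equal-modulo-orientation}, which only requires $\{\G=r\}\subset B\cap\ker z'_{|H}$, to get $z'_{|H}=\a z_r'$; the unit-norm condition forces $|\a|=1$, the existence of some $b_n$ with $\G(b_n)>r$ (available because $I$ is open and $\G(B)$ is an interval) pins down $\a=1$, and uniqueness of the norm-preserving extension under \textbf{B4} gives $z'=\hat z_r'$. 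With that repair the graph identity holds, the graph is Borel in the Polish space $I\times E'$, and in fact you then get Borel measurability of $Z$ directly by the Lusin--Souslin theorem (the projection from the graph to $I$ is a Borel bijection), which is slightly stronger than the analytic-set argument you propose; Pettis then gives Bochner measurability as you say.
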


Let us briefly return to our initial example \eqref{Dgeq0} of bounded probability densities.
There it can be shown that \textbf{G4} is automatically satisfied, and \textbf{B5}
is satisfied if and only if $1<p<\infty$ and $E=\Lx p \mu$ is separable.
If the remaining assumptions of Theorem \ref{strong-measurable-and-int} hold true, too, we thus  see
that that map $Z:I\to \Lx {p'} \mu$ is measurable.
Unfortunately, however, this may not be the desired property. Indeed, in \eqref{Dgeq0} it seems natural to take $p=1$
and ask for the measurability of $Z:I\to \Lx {\infty} \mu$.
Clearly, \textbf{B5} is violated in this case, and thus Theorem \ref{strong-measurable-and-int}
does not provide the desired answer. The following corollary partially addresses this issue.

\begin{corollary}\label{meas-cor}
Assume that   \textbf{B1}, \textbf{B2*}, \textbf{B3}, \textbf{B4}, \textbf{G1*} and \textbf{G2}
are satisfied for $E$, $B\subset E$, $\p'\in E'$, $F$, and $\G:B\to \R$ and let 
$(\hat z_r')_{r\in I}\subset E'$ be the corresponding
family of separating functionals found in Theorem \ref{unique-separation-char-2}.
In addition, let $E_0\hookrightarrow E$ be a continuously embedded Banach space with $B\subset E_0$ such that 
 \textbf{B2} to \textbf{B5}, as well as \textbf{G1*}, and  \textbf{G4} are satisfied for $E_0$ and  $F$.
Then we also obtain a family $(\hat z_{0,r}')_{r\in I}\subset E_0'$ of separating functionals by 
Theorem \ref{unique-separation-char-2} and this family is measurable in the sense of Theorem \ref{strong-measurable-and-int}
with respect to the space $E_0'$. Moreover, there exists a measurable map $\a:(I,\hat {\ca B}(I))\to (\R, \ca B(\R))$ such that 
for all $r\in I$ we have 
$\a(r)>0$  and 
\begin{align}\label{meas-cor-a1}
   (\hat z_r')_{|E_0} = \a(r) \hat z_{0,r}'\, .
\end{align}
\end{corollary}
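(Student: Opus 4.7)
The plan is to apply Theorems \ref{unique-separation-char-2} and \ref{strong-measurable-and-int} to the space $E_0$ to obtain the measurable family $(\hat z_{0,r}')$, deduce \eqref{meas-cor-a1} from uniqueness of the separating family up to positive scaling, and finally extract $\a$ as a measurable function by expressing it intrinsically through $\hat z_{0,r}'$.

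I begin by checking that Theorems \ref{unique-separation-char-2} and \ref{strong-measurable-and-int} apply with $E_0$ as ambient space. Assumption \textbf{B1} transfers to $E_0$ via $\p'_{|E_0}$ (bounded since $E_0\hookrightarrow E$), and \textbf{B2*} holds because the set $A=-\xstar+B\subset F$ and its $\fnorm{\cdot}$-interior are intrinsic to $B$ and do not depend on the ambient space. Assumption \textbf{G2} for $E_0$ is not listed explicitly but follows from the norm-independent characterization in the equivalence i) $\Leftrightarrow$ ii) of Theorem \ref{unique-separation-char-2}: the $E$-version of \textbf{G1*} and \textbf{G2} yields quasi-monotonicity of $\G$ and strict quasi-monotonicity of $\G_{|B_0}$, which combined with the given \textbf{G1*} for $E_0$ (via ii) $\Rightarrow$ i) for $E_0$) produces \textbf{G2} for $E_0$. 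Thus Theorem \ref{unique-separation-char-2} produces $(\hat z_{0,r}')_{r\in I}\subset E_0'$ with $\snorm{\hat z_{0,r}'}_{E_0'}=1$, and Theorem \ref{strong-measurable-and-int} makes $r\mapsto\hat z_{0,r}'$ a $(\hat{\ca B}(I),\ca B(E_0'))$-measurable map.

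Next, the continuous embedding $E_0\hookrightarrow E$ gives $(\hat z_r')_{|E_0}\in E_0'$, and since \eqref{unique-separation-a1}--\eqref{unique-separation-a3} only involve values on $B\subset E_0$, the restriction is itself a separating family in $E_0'$. By uniqueness in Theorem \ref{unique-separation-char-2} iv) (any two separating families in $E_0'$ differ by a positive scalar, since their normalizations must coincide and the sign is fixed by the separation equations), there is $\a(r)>0$ with $(\hat z_r')_{|E_0}=\a(r)\hat z_{0,r}'$; positivity follows by testing at any $x\in B$ with $\G(x)>r$. Restricting further to $H$, the normalization $\snorm{(\hat z_r')_{|H}}_{E'}=1$ from Theorem \ref{unique-separation-char-2} iv) for $E$ yields the intrinsic formula
\begin{equation*}
  \a(r)\,=\,\Bigl(\sup\bigl\{|\hat z_{0,r}'(x)| : x\in H,\;\enorm{x}\leq 1\bigr\}\Bigr)^{-1}\,,
\end{equation*}
which expresses $\a(r)$ entirely through $\hat z_{0,r}'$, without reference to the a priori unknown $\hat z_r'$.

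Finally, since $E_0'$ is separable by \textbf{B5}, so is $E_0$, and hence the subset $\{x\in H : \enorm{x}\leq 1\}\subset E_0$ admits a countable $\snorm{\cdot}_{E_0}$-dense subset $\{h_n\}$. By $\snorm{\cdot}_{E_0}$-continuity of $\hat z_{0,r}'$ on $H$, the supremum above equals $\sup_n|\hat z_{0,r}'(h_n)|$. Each $r\mapsto\hat z_{0,r}'(h_n)$ is measurable as the composition of the measurable map $r\mapsto\hat z_{0,r}'$ with the continuous evaluation on $E_0'$; a countable supremum of measurable real-valued functions is measurable, and this supremum is strictly positive (since $\hat z_{0,r}'\neq 0$ on $B\subset H$) and finite, so $\a(r)$ is measurable. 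The main obstacle is precisely this final step: the hypotheses do not yield any direct measurability of $r\mapsto\hat z_r'$ in $E'$ (since \textbf{B5} is not assumed for $E$), so $\a$ must be extracted from the measurable map $r\mapsto\hat z_{0,r}'$ alone, which is exactly what the intrinsic formula enables.
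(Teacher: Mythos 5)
Your proposal follows essentially the same route as the paper: apply Theorems \ref{unique-separation-char-2} and \ref{strong-measurable-and-int} to $E_0$, invoke Lemma \ref{equal-modulo-orientation} (equivalently, uniqueness up to scaling) on $H$ to get $(\hat z_r')_{|H}=\a(r)(\hat z_{0,r}')_{|H}$ with $\a(r)>0$ by testing on some $x\in\{\G>r\}$, extend to $E_0$ by denseness, and then recover $\a(r)$ intrinsically from the normalization $\snorm{(\hat z_r')_{|H}}_{(H,\enorm\cdot)'}=1$ as $\a(r)=\bigl(\sup_{x\in H\cap B_E}|\hat z_{0,r}'(x)|\bigr)^{-1}$, which you render measurable via a countable dense subset of $H\cap B_E$ — this is exactly the paper's argument (the paper takes the countable set $\enorm\cdot$-dense and uses $\enorm\cdot$-continuity of $(\hat z_{0,r}')_{|H}$ deduced from the relation with $\hat z_r'$, whereas you use $\snorm\cdot_{E_0}$-denseness and continuity; both are fine). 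The only cosmetic difference is your roundabout verification that \textbf{G2} transfers to $E_0$: the paper simply observes that \textbf{G2} is formulated in terms of $\hnorm\cdot$, which on $B-B\subset F$ reduces to $\fnorm\cdot$ and is thus independent of the ambient space, so no detour through quasi-monotonicity is needed.
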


Note that the functionals $\hat z_r'$ and $\hat z_{0,r}$ are normalized with respect to the dual norms of 
$\enorm\cdot$ and $\snorm\cdot_{E_0}$, respectively, and therefore, we typically have $\a(r) \neq 1$.
Moreover, the assumptions ensure that $E_0$ is dense in $E$ and therefore $\hat z_{0,r}'$ can be uniquely extended 
to a continuous functional on $E$, namely to  $\frac 1 {\a(r)} \hat z_r'$.

The main message of 
Corollary \ref{meas-cor} is  that $r\mapsto  (\hat z_r')_{|E_0}$ is measurable with respect to $E_0'$,
that is, even if we use the normalization with respect to $E$, we still obtain measurability with respect to  $E_0'$.
Applied to our motivating example in front of Corollary \ref{meas-cor}, this means that we obtain 
a family $(h_r)_{r\in I}\subset \Lx \infty \mu$ that represent 
the functionals $\hat z_r'\in (\Lx 1 \mu)'$ such that $r\mapsto h_r$ is measurable with 
respect to  $\Lx p \mu$ for all $p\in (1,\infty)$. The latter can then be used to conclude that we find a 'version'
$\tilde h:I\times \Om\to \R$ of this family that is $\hat{\ca B}(I) \otimes \ca A$-measurable, and in turn
this measurability can be used to make the second part of Osband's principle work, see \cite{StPaWiZh14a} 
for a more elementary but also  technically more involved approach.
Moreover, our normalization in $(\Lx 1 \mu)' = \Lx \infty \mu$ means that we have 
 $\inorm{h_r}=1$ for all $r\in I$, and 
the latter is the additional information provided by Corollary \ref{meas-cor}
when compared to Theorem \ref{strong-measurable-and-int}.
Finally, whether  $r\mapsto h_r$ is actually measurable with 
respect to  $\Lx \infty \mu$ remains an open question.

\section{Continuous Dependence of the Separating Hyperplanes}\label{sec:cont-sep}

In this section we investigate even stronger regularity of $r\mapsto \hat z_r'$, namely some
forms of continuity.
To this end, we need the following two  
additional assumptions:
\begin{description}[itemsep=-.5ex]
\item[B6 (Separable Banach space).] The space $E$ is a separable Banach space.

\item[G5 (Weak level set continuity).]  For all $r\in I$ and all sequences $(r_n)\subset I$ with $r_n\to r$ there exists 
 an $x \in H \setminus \spann\{\G=r  \}$ such that 
 \begin{align}\label{g5-conv}
  d\bigl(x, \spann\{\G=r_n  \}  \bigr) \to  d\bigl(x, \spann\{\G=r  \}  \bigr)\, ,
 \end{align}
 where the distance is measured in the norm $\enorm\cdot$.
\end{description}

Note that the separability of $E$ is not really necessary if one works with nets instead of sequences throughout the
proofs for this section. However, 
for the sake of simplicity, we decided to stick with sequences.
Also note that \textbf{G5} essentially means, see 
the proof of Theorem \ref{cont-dep} for details, that $\langle \hat z_{r_n}', x\rangle \to
\langle \hat z_{r}', x\rangle$ for this particular $x$. In other words, \textbf{G5}
asserts that there is at least one $x\not \in \ker z_r'$ for which we have some very weak sort of
'continuity'. Here we put continuity in quotation marks since unlike in continuity,
\textbf{G5} allows $x$ to depend on the chosen sequence $(r_n)$.

The following result shows that this is already enough to obtain weak*-continuity of 
$r\mapsto \hat z_r'$.

\begin{theorem}\label{cont-dep}
   Let \textbf{B1}, \textbf{B2*}, \textbf{B3}, \textbf{B4}, \textbf{B6}, \textbf{G1*}, \textbf{G2}
   be satisfied. Moreover, let $Z:I\to E'$ be defined by 
\begin{displaymath}
 Z(r):=  \hat z_r' \, ,
\end{displaymath}
where $\hat z_r'\in E'$ are the unique functionals obtained in Theorem \ref{unique-separation-char-2}.
   Then the following statements are equivalent:
   \begin{enumerate}
    \item Assumption \textbf{G5} be satisfied.
    \item For all  $r\in I$, all sequences $(r_n)\subset I$ with $r_n\to r$, and all $x \in E$ 
    convergence \eqref{g5-conv}
    holds.
    \item  For all  $r\in I$, all sequences $(r_n)\subset I$ with $r_n\to r$, and all $x \in E$ we have 
    $\langle \hat z_{r_n}', x\rangle \to \langle \hat z_r', x\rangle$.
   \end{enumerate}
%
%
\end{theorem}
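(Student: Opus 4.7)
The plan is to reduce the three statements to concrete pointwise statements about the family $(\hat z_r')$ via a key density identity: I will show that
\begin{equation*}
\spann\{\G=r\} \;=\; \ker \hat z_r' \cap H,
\qquad\text{and therefore}\qquad
\overline{\spann\{\G=r\}}^{\enorm\cdot} \;=\; \ker \hat z_r',
\end{equation*}
the second equality following from the first by \textbf{B4}. Combined with $\snorm{\hat z_r'}_{E'} = 1$, the standard distance-to-hyperplane formula then yields
\begin{equation*}
d\bigl(x,\spann\{\G=r\}\bigr) \;=\; d(x,\ker \hat z_r') \;=\; |\hat z_r'(x)| \qquad (x \in E).
\end{equation*}
The inclusion $\subseteq$ in the first identity is immediate from Theorem \ref{unique-separation-char-2}. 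For the reverse inclusion, fix any $u \in \ker \hat z_r' \cap H$; using Lemma \ref{F-is-in-kernel} and a reference point $x_0 \in L_r := \{\G=r\}$, I may write $u = w + \b x_0$ with $w \in F \cap \ker \hat z_r'$ and $\b \in \R$, so it suffices to prove $w \in \R(L_r - x_0)$. I do this by choosing $x_0$ in the $\fnorm\cdot$-interior of $B$: \textbf{B2*} places $\xstar$ in that interior, $r \in I = \mathring{\G(B)}$ forces $\G$ to take values strictly above and below $r$ on $B$, and the intermediate value theorem applied along a segment from $\xstar$ to a suitable point of $B$ — combined with the open-segment lemma for convex sets — yields an $x_0 \in L_r$ that still lies in the $\fnorm\cdot$-interior. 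For such $x_0$ one has $\eps > 0$ with $x_0 + \eps B_F \subset B$, so $x_0 + tw \in B$ for sufficiently small $|t|$; the separation property from Theorem \ref{unique-separation-char-2} then forces $x_0 + tw \in \ker\hat z_r' \cap B = L_r$, whence $w \in \R(L_r - x_0)$.

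Granted the identity, \emph{(iii) $\Rightarrow$ (ii)} follows since pointwise weak*-convergence gives $|\hat z_{r_n}'(x)| \to |\hat z_r'(x)|$, which by the identity is exactly the asserted distance convergence. \emph{(ii) $\Rightarrow$ (i)} is trivial because $\spann L_r = \ker\hat z_r' \cap H$ is a proper subspace of $H$ (as $\hat z_r'|_H \neq 0$).

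\emph{(i) $\Rightarrow$ (iii)} is the principal step. By \textbf{B6} the closed unit ball of $E'$ is weak*-sequentially compact, so every subsequence of $(\hat z_{r_n}')$ admits a further subsequence converging weak* to some $w' \in E'$; it suffices to show any such $w'$ equals $\hat z_r'$. The $\enorm\cdot$-continuity of $\G$ (\textbf{G1*}) combined with Theorem \ref{unique-separation-char-2} yields $w'(y) \le 0$ for $y \in B$ with $\G(y) < r$ and $w'(y) \ge 0$ for $\G(y) > r$; \textbf{G2} together with the $\enorm\cdot$-continuity of $w'$ promotes this to $w'|_{L_r} = 0$. Thus $w'$ vanishes on $\overline{\spann L_r}^{\enorm\cdot} = \ker \hat z_r'$, so $w' = c\,\hat z_r'$ for some $c \in \R$, with $c \ge 0$ by the sign information. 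Finally, \textbf{G5} supplies $x \in H \setminus \spann L_r$ with the distance convergence; since $\spann L_r = \ker\hat z_r' \cap H$, this forces $\hat z_r'(x) \ne 0$, and the distance identity rewrites \textbf{G5} as $|\hat z_{r_n}'(x)| \to |\hat z_r'(x)|$. Along the chosen subsequence the left side tends to $|c\,\hat z_r'(x)|$, forcing $c = 1$; hence $w' = \hat z_r'$, and a standard subsequence argument yields the full weak*-convergence $\hat z_{r_n}' \xrightarrow{w^*} \hat z_r'$.

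The main obstacle is the density lemma, specifically the \emph{strict} equality $\spann L_r = \ker\hat z_r' \cap H$ rather than mere $\enorm\cdot$-density in $\ker\hat z_r'$. Without this equality, the $x$ provided by \textbf{G5} could lie in $\overline{\spann L_r} \setminus \spann L_r$ with $\hat z_r'(x) = 0$, and the last step pinning down $c = 1$ would collapse. Producing $x_0 \in L_r$ inside the $\fnorm\cdot$-interior of $B$ is the delicate geometric crux on which the whole argument rests.
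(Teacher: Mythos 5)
Your proof is correct, and its overall strategy coincides with the paper's: extract a weak\textsuperscript{*}-convergent subsequence of $(\hat z_{r_n}')$ by the sequential Banach--Alaoglu theorem (hence \textbf{B6}), show the limit is a non-negative scalar multiple of $\hat z_r'$, translate \textbf{G5} into $|\hat z_{r_n}'(x)|\to|\hat z_r'(x)|$ via $d(x,\spann\{\G=s\})=|\hat z_s'(x)|$ (Lemma~\ref{margin-calculation} together with $\ker z_s'=\spann\{\G=s\}$ from Lemma~\ref{kernel-is-determined}), and use it to pin the scalar to $1$; then the subsequence principle closes the argument. What you do differently, and more cleanly, is the direction (i)$\Rightarrow$(iii): the paper first constructs the weak\textsuperscript{*}-limit as $\a\hat z_r'$ with $\a\in[0,1]$ via Lemma~\ref{cont-in-kernel}, and then reduces \textbf{G5}'s witness $x\in H$ to one in $F$ and finally to one in $B$ in order to exploit the sign of $\langle\hat z_{r_n}',x\rangle$ directly. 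You instead derive the vanishing of the limit $w'$ on $\{\G=r\}$ directly from the two-sided half-space characterization of Theorem~\ref{unique-separation-char-2} plus \textbf{G2} and $\enorm\cdot$-continuity of $w'$, and then work with absolute values for a \emph{general} $x\in H\setminus\spann\{\G=r\}$, using the strict identity $\spann\{\G=r\}=\ker\hat z_r'\cap H$ to ensure $\hat z_r'(x)\neq 0$ and $c\geq 0$ to strip the absolute value. This collapses the paper's $B\to F\to H$ case analysis into one step. Your re-derivation of the kernel identity via an interior point $x_0$ of the level set is essentially Lemmas~\ref{all-interirors} and \ref{kernel-is-determined}, so you could simply cite those; otherwise the argument is sound.
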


If $E'$ is a uniformly convex Banach space and the assumptions of Theorem \ref{cont-dep} are satisfied, then 
the map
$Z:I\to E'$
is actually norm continuous. Indeed, uniformly convex Banach spaces are reflexive, see \cite[Prop.~1.e.3]{LiTz79}
or \cite[p.~196]{Beauzamy82}, 
and thus weak*-continuity equals weak-continuity. Moreover, our normalization guarantees $\snorm{\hat z_r'} = 1$
for all $r\in I$, and therefore, we obtain norm-continuity by \cite[p.~198]{Beauzamy82}.

The next result shows that \textbf{G5} is superfluous, even for norm-continuity,  
as long as $E$ is finite-dimensional. 
In a  different form it has also been shown in \cite{LambertXXa}.

\begin{corollary}\label{cont-dep-fin-dim}
   Let \textbf{B1}, \textbf{B2*}, \textbf{B3}, \textbf{B4},   \textbf{G1*}, \textbf{G2}
   and   be satisfied, and $E$ be finite dimensional.
   Then, the map 
$Z:I\to E'$ defined by 
\begin{displaymath}
 Z(r):=  \hat z_r' \, ,
\end{displaymath}
where $\hat z_r'\in E'$ are the unique functionals obtained in Theorem \ref{unique-separation-char-2},
is norm continuous.
\end{corollary}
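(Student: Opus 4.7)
My plan is to exploit finite-dimensional compactness of the dual unit sphere together with the uniqueness statement of Theorem \ref{unique-separation-char-2}. Since $E$ is finite dimensional, \textbf{B6} is automatic and weak*-continuity on $E'$ coincides with norm continuity, so one could in principle deduce the corollary from Theorem \ref{cont-dep} by verifying \textbf{G5}; I prefer, however, to argue continuity directly, which also implicitly establishes \textbf{G5}.

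Fix $r\in I$ and let $r_n\to r$ in $I$. Since $E'$ is finite dimensional its closed unit sphere is compact, and by Theorem \ref{unique-separation-char-2} we have $\snorm{\hat z_{r_n}'}_{E'}=1$ for all $n$. Hence every subsequence of $(\hat z_{r_n}')$ has a further subsequence converging in norm to some $w\in E'$ with $\snorm{w}_{E'}=1$. The plan is to show that any such limit $w$ must equal $\hat z_r'$; by the standard subsequence principle this yields $\hat z_{r_n}'\to\hat z_r'$ in norm, which is the desired continuity. By the uniqueness part of Theorem \ref{unique-separation-char-2}, for this identification it is enough to verify that $w$ is itself a separating functional for $\G$ at $r$.

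The weak inequalities $\langle w,x\rangle\leq 0$ on $\{\G<r\}$, $\langle w,x\rangle\geq 0$ on $\{\G>r\}$, and $\langle w,x\rangle=0$ on $\{\G=r\}$ are straightforward. For $x\in\{\G<r\}$, $\enorm\cdot$-continuity of $\G$ yields $\G(x)<r_{n_k}$ eventually along the convergent subsequence, so $\langle \hat z_{r_{n_k}}',x\rangle<0$ and the limit is $\leq 0$; the symmetric argument handles $\{\G>r\}$. For $x\in\{\G=r\}$, assumption \textbf{G2} provides approximating sequences from $\{\G<r\}$ and $\{\G>r\}$ that converge to $x$ (in $\enorm\cdot$, since $\hnorm\cdot$ dominates $\enorm\cdot$ by Lemma \ref{F-is-in-kernel}), and continuity of $w$ combined with the two weak inequalities just established yields $\langle w,x\rangle=0$.

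The main obstacle is upgrading these weak inequalities to the strict equalities demanded of a separating family, i.e.\ ruling out $w(x_0)=0$ for some $x_0\in B$ with $\G(x_0)\neq r$. Suppose this occurs, say $\G(x_0)<r$. Since $r\in I$, we may pick any $y\in\{\G>r\}$; by convexity of $B$ and continuity of $\G$, the segment $[x_0,y]\subset B$ meets $\{\G=r\}$ at some $z$. The affine function $w$ then has two distinct zeros on the line through $x_0$ and $z$, so it vanishes identically on that line, forcing $w(y)=0$. Applying this to every $y\in\{\G>r\}$ yields $w\equiv 0$ on $\{\G>r\}$; an analogous segment argument pivoted on a point $y\in\{\G>r\}$ (where now $w(y)=0$), combined with $w=0$ on $\{\G=r\}$, then forces $w\equiv 0$ on $\{\G<r\}$. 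Hence $w\equiv 0$ on $B$, so $w=0$ on $\spann B=H$ by linearity, and by \textbf{B4} together with continuity of $w$ also on all of $E$, contradicting $\snorm{w}_{E'}=1$. This completes the identification $w=\hat z_r'$.
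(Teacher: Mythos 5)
Your proof is correct, and it takes a genuinely different route from the paper. The paper deduces the corollary from Lemma \ref{weak-convergence}, which is built on the sequential Banach--Alaoglu theorem together with Lemma \ref{cont-in-kernel} (controlling the limit on $\ker z_r'$) and Lemma \ref{equal-modulo-orientation} (identifying the limit as a scalar multiple of $\hat z_r'$); in finite dimensions weak* convergence coincides with norm convergence, and the normalization then forces the scalar to be $1$. You instead extract a norm-convergent subsequence directly from compactness of the dual unit sphere and verify from first principles that any limit $w$ is a separating functional at $r$, invoking only the uniqueness part of Theorem \ref{unique-separation-char-2}. Your segment/intermediate-value argument for upgrading the weak inequalities to strict ones is, in effect, an elementary re-derivation of what Lemmas \ref{kernel-is-determined} and \ref{equal-modulo-orientation} accomplish via the relative-interior machinery, but it is valid and more self-contained: it does not require Banach--Alaoglu, the earlier kernel lemmas, or Lemma \ref{cont-in-kernel}. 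The paper's route is shorter given the machinery already in place; yours would stand alone. One small misattribution: in establishing the weak inequality on $\{\G<r\}$, the fact that $\G(x)<r_{n_k}$ eventually follows simply from $r_{n_k}\to r>\G(x)$; continuity of $\G$ is not what is used there (it is used later for the intermediate value theorem on segments).
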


Note that for finite dimensional spaces $E$, condition \textbf{B4} reduces to $H=E$, that is $E=\spann B$.
Moreover, 
in the case of the example discussed around \eqref{Dgeq0} a finite dimension of $E$ means that $\Om$ is finite.

Finally, note that condition \textbf{G5} does not appear
in
Corollary \ref{cont-dep-fin-dim}. Since \textbf{B6} is automatically satisfied for finite dimensional spaces,
we thus conclude by Theorem \ref{cont-dep} that  \textbf{G5} always holds in this setting. Whether this 
is true in more general settings remains an open question.

\section{Quasi-Monotonicity}\label{sec:quasi-monotone}

In this section we briefly recall some simple facts about quasi-monotone functions we need 
throughout the paper. Some of these results may be folklore but since we were not able 
to find references establishing these results in the needed generality, we added their proofs.

We begin with the following characterization of quasi-monotonicity.

\begin{lemma}\label{quasi-monotone}
   Let $E$ be a vector space, $X\subset E$ be a convex subset and $\G:X\to \R$ be a function. Then 
   the following statements are equivalent:
   \begin{enumerate}
      \item The function $\G$ is quasi-monotone.
      \item The function $t\mapsto \G(t x_1+(1-t)x_0)$ defined on $[0,1]$ is monotone  for all $x_0,x_1\in X$. 
   \end{enumerate}
\end{lemma}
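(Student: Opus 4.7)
The plan is to prove the two directions separately, since the implication from monotone-on-segments to quasi-monotone is trivial while the reverse requires a small but genuine real-analysis argument.

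For the easy direction \emph{ii)} $\Rightarrow$ \emph{i)}: given $x_0,x_1\in X$ and $\alpha\in [0,1]$, set $\varphi(t):=\G((1-t)x_0+tx_1)$ for $t\in[0,1]$. By assumption $\varphi$ is monotone, so $\varphi(\alpha)$ lies between $\varphi(0)=\G(x_0)$ and $\varphi(1)=\G(x_1)$. Hence
\[
 \min\{\G(x_0),\G(x_1)\} \;\leq\; \G((1-\alpha)x_0+\alpha x_1) \;\leq\; \max\{\G(x_0),\G(x_1)\},
\]
which simultaneously establishes quasi-convexity and quasi-concavity, i.e.\ quasi-monotonicity.

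For \emph{i)} $\Rightarrow$ \emph{ii)}: fix $x_0,x_1\in X$ and again set $\varphi(t):=\G((1-t)x_0+tx_1)$. For any $t_1<t_2<t_3$ in $[0,1]$, the point $(1-t_2)x_0+t_2 x_1$ is a convex combination of $(1-t_1)x_0+t_1 x_1$ and $(1-t_3)x_0+t_3 x_1$ (with weight $\alpha=(t_2-t_1)/(t_3-t_1)$), so quasi-convexity and quasi-concavity of $\G$ translate directly into the \emph{three-point property}
\[
 \min\{\varphi(t_1),\varphi(t_3)\} \;\leq\; \varphi(t_2) \;\leq\; \max\{\varphi(t_1),\varphi(t_3)\}.
\]
Thus everything reduces to a purely real-variable lemma: every $\varphi:[0,1]\to\R$ with the three-point property is monotone.

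The expected main obstacle lies in this last step, although it is an elementary case-analysis rather than anything deep. To handle it, assume without loss of generality $\varphi(0)\leq\varphi(1)$ (otherwise work with $-\varphi$) and show $\varphi$ is non-decreasing. The three-point property applied with endpoints $0$ and $1$ gives $\varphi(s)\in[\varphi(0),\varphi(1)]$ for every $s\in[0,1]$. For arbitrary $s<t$ in $[0,1]$, applying the property to the triple $0<s<t$ yields $\varphi(s)\in[\min\{\varphi(0),\varphi(t)\},\max\{\varphi(0),\varphi(t)\}]$; combined with $\varphi(t)\geq\varphi(0)$ this forces $\varphi(s)\leq\varphi(t)$. (A symmetric argument treats the cases where $s=0$ or $t=1$.) Feeding this back into the above reduction completes the proof of \emph{i)} $\Rightarrow$ \emph{ii)}.
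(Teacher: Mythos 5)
Your proof is correct and follows essentially the same approach as the paper: the nontrivial direction \emph{i)} $\Rightarrow$ \emph{ii)} is handled by normalizing so that $\G(x_0)\leq\G(x_1)$, first pinning each intermediate value into $[\G(x_0),\G(x_1)]$, and then applying quasi-monotonicity again on the subsegment from $x_0$ to $x_t$ to conclude $\G(x_s)\leq\G(x_t)$ for $s\leq t$. The only cosmetic difference is that you explicitly factor through a one-variable ``three-point property implies monotone'' lemma, whereas the paper carries out the identical nested-segment argument directly in $X$.
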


\begin{proof}
  In the following, we fix some $x_0,x_1\in X$ and $t\in [0,1]$, and 
    define $x_t := t x_1+(1-t)x_0$.

  \atob i {ii} Without loss of generality we may assume $\G(x_0) \leq \G(x_1)$.
  Then quasi-monotonicity ensures $\G(x_0) \leq \G(x_t) \leq \G(x_1)$. Now let us fix an $s\in [0,t]$.
  Then $x_s$ is in the segment between $x_0$ and $x_t$ and hence we obtain by the same reasoning that 
  $\G(x_0) \leq \G(x_s) \leq \G(x_t)$.
  
  \atob {ii} {i} By assumption we have $\min\{\G(x_0), \G(x_1)  \} \leq \G(x_t) \leq \max\{\G(x_0), \G(x_1)  \}$,
  and this is equivalent to being both quasi-convex and quasi-concave.
\end{proof}

Our first result shows that for continuous functionals $\G:X\to \R$, quasi-monotonicity
is equivalent to the convexity of all level sets.

\begin{lemma}\label{quasi-m-char} 
Let $E$ be a topological vector space, $X\subset E$ be a convex subset and 
$\G:X\to \R$ be a continuous function. 
Then the following statements are equivalent:
\begin{enumerate}[itemsep=-.6ex]
 \item For all $r\in \im \G$, the level sets $\{\G=r\}$ are convex.
 \item For all $r\in \im \G$, the sets $\{\G<r\}$ and $\{\G>r\}$ are convex.
 \item The function $\G$ is quasi-monotone, i.e.~the sets $\{\G\leq r\}$ and $\{\G\geq r\}$ are convex
 for all  $r\in \im \G$.
\end{enumerate}
\end{lemma}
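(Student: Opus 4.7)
The plan is to run the cycle $(iii) \Rightarrow (i), (ii)$, $(ii) \Rightarrow (i)$, and $(i) \Rightarrow (iii)$; continuity of $\G$ will only be needed in the last step, via the intermediate value theorem on one-dimensional restrictions. The two implications out of (iii) are immediate. For $(iii) \Rightarrow (i)$, simply note that $\{\G=r\} = \{\G\leq r\}\cap\{\G\geq r\}$ is an intersection of two convex sets. For $(iii) \Rightarrow (ii)$, given $x,y \in \{\G<r\}$, set $s := \max(\G(x),\G(y)) \in \im\G$; then $\{\G\leq s\}$ is convex by (iii) and contains both endpoints, so the whole segment lies in $\{\G\leq s\}\subset\{\G<r\}$, and the argument for $\{\G>r\}$ is symmetric.

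For $(ii) \Rightarrow (i)$, fix $r \in \im\G$, take $x,y \in \{\G=r\}$, and define $\p(t) := \G(tx+(1-t)y)$, which is continuous on $[0,1]$ with $\p(0)=\p(1)=r$. If $\p$ takes some value above $r$, then its maximum $M$ is attained at some $t^\ast\in(0,1)$, and $M\in\im\G$. Assumption (ii) then makes $\{\G<M\}$ convex, and since $x,y\in\{\G<M\}$, the entire segment is contained in $\{\G<M\}$, contradicting $\p(t^\ast)=M$. A symmetric argument using $\{\G>m\}$ with $m := \min\p \in \im \G$ rules out values below $r$, so $\p\equiv r$ and $\{\G=r\}$ is convex.

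For $(i) \Rightarrow (iii)$, by Lemma \ref{quasi-monotone} it suffices to show that $\p(t) := \G((1-t)x_0 + tx_1)$ is monotone on $[0,1]$ for every $x_0,x_1\in X$. If it fails to be monotone, continuity yields $0\leq a < t^\ast < b \leq 1$ with, say, $\p(t^\ast) > \max(\p(a),\p(b))$. Pick $s$ strictly between these two values; by the IVT applied on $[a,t^\ast]$ and $[t^\ast,b]$ there exist $t_1 \in (a,t^\ast)$ and $t_2 \in (t^\ast,b)$ with $\p(t_1)=\p(t_2)=s$, putting the corresponding points $x_{t_1}, x_{t_2}$ in $\{\G=s\}$. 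But $x_{t^\ast}$ is a convex combination of $x_{t_1}$ and $x_{t_2}$ with coefficient $\lambda = (t_2-t^\ast)/(t_2-t_1)\in(0,1)$, so convexity of $\{\G=s\}$ from (i) forces $\p(t^\ast)=s$, contradicting $\p(t^\ast)>s$. The case of a strict local minimum is handled identically.

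The only genuinely delicate point is the choice of the \emph{attained} extrema $M$ and $m$ of $\p$ in the $(ii) \Rightarrow (i)$ step: an arbitrary cutoff strictly between $r$ and $\p(t^\ast)$ need not lie in $\im\G$, and (ii) only supplies convexity for levels inside $\im\G$.
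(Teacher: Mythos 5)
Your proposal is correct, and it takes a genuinely different route from the paper. The paper proves the three-cycle (i) $\Rightarrow$ (ii) $\Rightarrow$ (iii) $\Rightarrow$ (i): the only non-trivial step is (i) $\Rightarrow$ (ii), which is shown by a direct contradiction using the intermediate value theorem along a segment and the convexity of a single level set; (ii) $\Rightarrow$ (iii) and (iii) $\Rightarrow$ (i) are pure set-theoretic intersections. You instead prove four implications: the trivial (iii) $\Rightarrow$ (i), then (iii) $\Rightarrow$ (ii) and (ii) $\Rightarrow$ (i) via compactness of $[0,1]$ and attained extrema of the one-dimensional restriction $\varphi$, and finally (i) $\Rightarrow$ (iii) by appealing to Lemma \ref{quasi-monotone} (quasi-monotone iff monotone on segments) plus an IVT argument. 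Both approaches ultimately hinge on the same core idea — restrict to a segment, use IVT to find two points at the same $\G$-level straddling a third, and invoke level-set convexity — so the genuine difference lies in the implication graph and in your use of Lemma \ref{quasi-monotone} and of attained extrema, which the paper's proof avoids. Your self-identified ``delicate point'' at the end is in fact a non-issue: since $X$ is convex hence connected and $\G$ is continuous, $\im\G$ is an interval, so any cutoff $s$ with $r<s<\varphi(t^\ast)$ automatically lies in $\im\G$ (indeed it is attained by $\varphi$ on $[0,t^\ast]$ by IVT); nevertheless, your choice of the attained maximum is a clean way to sidestep having to make this observation.
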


\begin{proof}
\atob i {ii} By symmetry, it suffices to consider
the case $\{\G<r\}$. Let us assume that  $\{\G<r\}$ is not convex. Then there exist
$x_0, x_1\in \{\G<r\}$ and an $\a\in (0,1)$ such that for
$x_\a := (1-\a)x_0 + \a x_1$ we have $x_\a \not\in \{\G<r\}$, that is $\G(x_\a)\geq r$.
Now, we first observe that, for $r_0:= \G(x_0)<r$ and $r_1:= \G(x_1)<r$,
we have $r_0 \neq r_1$, since $r_0=r_1$
would imply $\G(x_\a) \in \{\G = r_0\} \subset \{\G<r\}$ by the assumed convexity
of the level set $\{\G = r_0\}$.
Let us assume without loss of generality that $r_0 < r_1$. 
Then we have $r_1 \in (\G(x_0), \G(x_\a))$, and thus the 
intermediate value theorem applied to the continuous map 
$\b\mapsto \G( (1-\b)x_0 + \b x_\a)$ on $(0,1)$ yields a $\b^*\in (0,1)$ such that 
for $x^*:= (1-\b^*)x_0 + \b^* x_\a$ we have $\G(x^*) = r_1$.
Let us define $\g:= \frac{(1-\b^*)\a}{1-\b^*\a}$. Then we have 
$\g\in (0,1)$ and $x_\a = (1-\g) x^* + \g x_1$. By the assumed convexity of $\{\G = r_1\}$,
we thus conclude that $\G(x_\a) \in \{\G = r_1\} \subset \{\G<r\}$,
i.e.~we have found a contradiction.

\atob {ii} {iii} This follows from $\{\G\geq r\} = \bigcap_{r'<r} \{\G> r'\}$ and 
$\{\G\leq r\} = \bigcap_{r'>r} \{\G< r'\}$.

\atob {iii} i This follows from $\{\G=r\} = \{\G\leq r\} \cap \{\G\geq r\}$.
\end{proof}

\begin{lemma}\label{quasi-monotone-level-sets}
Let $E$ be a topological vector space, $X\subset E$ be a convex subset and 
$\G:X\to \R$ be a continuous, quasi-monotone function. Then the image $\im \G$ is an interval
and the sets $\{r<\G<s\}$ are convex, open, and non-empty 
for all $r,s\in \im \G$ with $r<s$.
\end{lemma}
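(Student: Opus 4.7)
The plan is to prove the four assertions in the order: interval, openness, convexity, non-emptiness, since each subsequent claim will use the previous ones.

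First, to see that $\im \G$ is an interval, I would pick arbitrary $a,b \in \im \G$ with $a < b$ and any $c \in (a,b)$, choose preimages $x_a, x_b \in X$, and consider the segment $[x_a, x_b] \subset X$, which lies in $X$ by convexity. The composition $t \mapsto \G\bigl((1-t)x_a + t x_b\bigr)$ is then a continuous real-valued function on $[0,1]$ taking values $a$ and $b$, so the classical intermediate value theorem yields a point with value $c$. Hence $c \in \im \G$ and $\im \G$ is an interval. In particular, for any $r,s \in \im \G$ with $r < s$, the entire open interval $(r,s)$ is contained in $\im \G$.

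Next, $\{r < \G < s\} = \G^{-1}\bigl((r,s)\bigr)$ is open in $X$ as the preimage of an open set under the continuous map $\G$. For convexity, I observe that $\{r < \G < s\} = \{\G > r\} \cap \{\G < s\}$. By Lemma \ref{quasi-m-char}, continuity together with quasi-monotonicity ensures that both $\{\G > r\}$ and $\{\G < s\}$ are convex (this is the equivalence of i) and ii) in that lemma, noting that $r,s \in \im \G$). The intersection of two convex subsets of $E$ is convex, which settles convexity.

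Finally, to show non-emptiness, I pick any $c \in (r,s)$; by the first step, $c \in \im \G$, so there exists $x \in X$ with $\G(x) = c$, giving $x \in \{r < \G < s\}$. No step is really the main obstacle here — the only mild point is to make sure the interval property of $\im \G$ is established before invoking it in the non-emptiness argument, and to cite Lemma \ref{quasi-m-char} rather than re-proving the convexity of the strict sub- and super-level sets.
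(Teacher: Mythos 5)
Your proposal is correct and follows essentially the same route as the paper: openness from continuity, convexity from Lemma \ref{quasi-m-char} applied to the strict sub- and super-level sets, and non-emptiness from the interval property of the image. The only cosmetic difference is in establishing that $\im\G$ is an interval, where you invoke the intermediate value theorem along a segment while the paper argues via connectedness of $\G(X)$; these are the same idea in different clothing.
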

 
\begin{proof}
Since $X$ is convex, it is connected, and thus $\G(X)$ is connected by the continuity of $\G$.
Since  the only connected sets in $\R$ are intervals, we conclude that  $\G(X)$ is an interval.

Moreover, the sets $\{r<\G<s\} = \{\G>r\} \cap \{\G< s\}$   are open by the continuity of $\G$, and 
Lemma \ref{quasi-m-char}  shows that they are also convex. To show that they are non-empty, we fix 
$r,s\in \im \G$ with $r<s$. Then we have $t:=(r+s)/2 \in \im \G$ since $\im \G$ is an interval, and thus 
there is an $x\in X$ with $\G(x) = t$. The construction now gives $x\in \{r<\G<s\}$.
\end{proof}

\begin{lemma}\label{quasi-monotone-extension}
   Let $E$ be a normed space, $X\subset E$ be a convex set and 
 $\G:X\to \R$ be a quasi-monotone function that has a continuous extension $\hat \G: \overline X\to \R$.
 Then  $\hat \G$ is quasi-monotone and we have 
 \begin{displaymath}
    \myint{\hat{\G} (\overline{X})} = \mathring{\G(X)}\, .
 \end{displaymath}

\end{lemma}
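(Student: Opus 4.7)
The plan is to verify quasi-monotonicity of $\hat\G$ by a density argument, and then deduce the interior identity from a simple sandwich. I would first fix $x_0, x_1 \in \overline X$ and, in view of Lemma \ref{quasi-monotone}, aim to show that the one-variable map $f(t) := \hat\G((1-t)x_0 + tx_1)$ is monotone on $[0,1]$. To this end, I would choose approximating sequences $x_0^{(n)}, x_1^{(n)} \in X$ with $x_0^{(n)} \to x_0$ and $x_1^{(n)} \to x_1$. Convexity of $X$ ensures $f_n(t) := \G((1-t)x_0^{(n)} + t x_1^{(n)})$ is well defined on $[0,1]$ and monotone there by Lemma \ref{quasi-monotone} applied to $\G$, while continuity of $\hat\G$ yields pointwise convergence $f_n(t) \to f(t)$. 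Whenever $f(0) \neq f(1)$, the sign of $f_n(1) - f_n(0)$ is eventually fixed, so all the corresponding $f_n$ are monotone in the same direction, and this direction is inherited by $f$ in the pointwise limit.

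The main obstacle will be the boundary case $f(0) = f(1)$, which I would settle by observing that monotonicity of each $f_n$ gives $|f_n(t) - f_n(0)| \leq |f_n(1) - f_n(0)|$ for every $t \in [0,1]$; letting $n \to \infty$ yields $|f(t) - f(0)| \leq |f(1) - f(0)| = 0$, so $f$ is constant. This completes the proof that $\hat\G$ is quasi-monotone.

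For the interior identity, I would set up the chain
\begin{equation*}
\G(X) \;\subseteq\; \hat\G(\overline X) \;\subseteq\; \overline{\G(X)}.
\end{equation*}
The first inclusion is immediate since $\hat\G$ extends $\G$, and the second follows from continuity of $\hat\G$: for any $\bar x \in \overline X$, choose $x_n \in X$ with $x_n \to \bar x$; then $\hat\G(\bar x) = \lim_n \G(x_n) \in \overline{\G(X)}$. Applying Lemma \ref{quasi-monotone-level-sets} to $\G$ on $X$ and to the already quasi-monotone $\hat\G$ on the convex set $\overline X$, both $\G(X)$ and $\hat\G(\overline X)$ are intervals, and $\overline{\G(X)}$ is one as the closure of an interval. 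Since every interval and its closure share the same interior, taking interiors in the chain sandwiches $\myint{\hat\G(\overline X)}$ between $\mathring{\G(X)}$ and $\mathring{\overline{\G(X)}} = \mathring{\G(X)}$, forcing equality throughout. The degenerate case where $\G$ is constant is trivial, as all three interiors are then empty.
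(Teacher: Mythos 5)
Your proof is correct, and the interior-identity part is essentially the same sandwich argument as the paper's ($\G(X) \subseteq \hat\G(\overline X) \subseteq \overline{\G(X)}$, then take interiors and use that $\G(X)$ is an interval). For the quasi-monotonicity of $\hat\G$, however, you take a slightly different route than the paper suggests. The paper's one-sentence hint points at passing to the limit directly in inequality \eqref{def-quasi-conv} and its quasi-concave analogue: approximate $x_0, x_1 \in \overline X$ by points of $X$, use $\min\{\G(x_0^{(n)}), \G(x_1^{(n)})\} \le \G\bigl((1-\a)x_0^{(n)} + \a x_1^{(n)}\bigr) \le \max\{\G(x_0^{(n)}), \G(x_1^{(n)})\}$, and let $n\to\infty$; since $\max$ and $\min$ are continuous, both inequalities survive the limit at once, with no case distinction. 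You instead route through the monotone-on-segments characterization (Lemma \ref{quasi-monotone}) and argue that a pointwise limit of segment-monotone functions is segment-monotone, which forces you to split into the case where the eventual direction is determined ($f(0)\neq f(1)$) and the degenerate case ($f(0)=f(1)$), handled by the bound $|f_n(t)-f_n(0)|\le|f_n(1)-f_n(0)|$. Both arguments are sound; yours is a bit longer because the segment-monotonicity property isn't expressed by a single closed inequality, whereas the paper's $\max$/$\min$ formulation is precisely designed to make this limit passage one-step. One small remark: you invoke Lemma \ref{quasi-monotone-level-sets} to get that $\hat\G(\overline X)$ is an interval, but your chain never uses that fact — you only need $\G(X)$ to be an interval (which follows already from continuity of $\G = \hat\G|_X$ and convexity of $X$), so that sentence can be trimmed.
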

 
\begin{proof}
The quasi-monotonicity of $\hat \G$ can be easily established using \eqref{def-quasi-conv} and
the analogue inequality for quasi-concavity.
Moreover, since $\hat \G$ is an extension of $\G$, we obviously have $\G(X) \subset \hat \G(\overline X)$, and thus 
we find $\mathring{\G(X)} \subset \myint{\hat{\G} (\overline{X})}$. To show the converse inclusion,
we first note that the continuity of $\hat \G$ yields 
$\hat \G(\overline X) \subset \overline{\hat \G(X)} = \overline{\G(X)}$. 
Therefore, we find 
\begin{displaymath}
  \myint{\hat{\G} (\overline{X})} \subset \myint  \overline{\G(X)} =  \mathring{\G(X)}\, ,
\end{displaymath}
where in the last step we used that $\G(X)$ is an interval.
\end{proof}

\begin{lemma}\label{closure-of-levels-lemma-1}
 Let $E$ be a normed space, $X\subset E$ be a non-empty set and 
 $\G:X\to \R$ be a functional that has a continuous extension $\hat \G: \overline X\to \R$.
 Then, for all $r\in \R$, 
 we have 
 \begin{align}\label{closure-of-levels-lemma-1-h1}
  \{\hat \G>r\} &\subset \overline {\{\G> r   \}}^E\\  \label{closure-of-levels-lemma-1-h2}
   \overline {\{\G\geq  r   \}}^E & \subset \{\hat \G\geq r\} \, .
 \end{align}
\end{lemma}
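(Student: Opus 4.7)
The plan is to prove each inclusion by a direct sequential continuity argument, using the fact that $\hat\G$ is continuous on $\overline X$ and that every point of $\overline X$ is a limit of points of $X$.

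For the first inclusion \eqref{closure-of-levels-lemma-1-h1}, I would pick $x \in \{\hat\G > r\}$, so in particular $x \in \overline X$ and $\hat\G(x) > r$. By definition of the closure, there is a sequence $(x_n) \subset X$ with $x_n \to x$ in $\enorm\cdot$. The continuity of $\hat\G$ together with $\hat\G_{|X} = \G$ then gives $\G(x_n) = \hat\G(x_n) \to \hat\G(x) > r$, so eventually $\G(x_n) > r$, i.e.~$x_n \in \{\G > r\}$ for all sufficiently large $n$. Therefore $x$ lies in the $\enorm\cdot$-closure of $\{\G > r\}$.

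For the second inclusion \eqref{closure-of-levels-lemma-1-h2}, I would take $x \in \overline{\{\G\geq r\}}^E$ and pick a sequence $(x_n) \subset \{\G\geq r\}$ with $x_n \to x$. Since $(x_n)\subset X$ and $x_n \to x$, we have $x \in \overline X$, so $\hat\G(x)$ is defined. By continuity, $\hat\G(x_n) \to \hat\G(x)$, and since $\hat\G(x_n) = \G(x_n) \geq r$ for every $n$, passing to the limit yields $\hat\G(x) \geq r$, i.e.~$x \in \{\hat\G\geq r\}$.

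There is no real obstacle here: both arguments are one-line limit arguments exploiting that $>$ is an open condition (which transfers the inequality from the limit to the tail of the sequence) and that $\geq$ is a closed condition (which transfers the inequality from the tail of the sequence to the limit). The only point worth being careful about is that in the first inclusion we use the openness of $(r,\infty)$ to get $\G(x_n) > r$ eventually, while in the second inclusion we use the closedness of $[r,\infty)$ to preserve $\G(x_n) \geq r$ in the limit; this is precisely the reason the two inclusions have the asymmetric form stated in the lemma and cannot simply be replaced by equalities.
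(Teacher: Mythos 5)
Your proof is correct and is essentially the same argument as the paper's. The paper phrases the first inclusion in terms of an explicit $\delta$-ball inside the open preimage $\hat\G^{-1}((r,\infty))$ and the second in terms of closure operations on sets, whereas you phrase both sequentially; in a normed space these are interchangeable formulations of the same continuity reasoning, so there is no substantive difference.
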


\begin{proof}
 To prove  \eqref{closure-of-levels-lemma-1-h1} we  fix an
 $x\in  \{\hat \G>r\}$ and define $r^*:= \hat \G(x)$ and  $\e:= r^* -r$. Since $\e>0$, the continuity of 
 $\hat \G$ shows that $\hat \G^{-1}((r^*-\e,\infty))$ is open in $\overline X$ with $x\in \hat \G^{-1}((r^*-\e,\infty))$, and thus there exists a $\d>0$ such that 
 $(x+\d B_E) \cap \overline X \subset \hat \G^{-1}((r^*-\e,\infty))$. Moreover, $\{\hat \G>r\}\subset \overline X$ gives 
 a sequence $(x_n)\subset X$ such that $x_n\to x$. Clearly, we may assume without loss of generality that 
 $\enorm{x-x_n}\leq \d$ for all $n\geq 1$, and hence we 
 find $\G(x_n) = \hat \G(x_n) > r^*-\e = r$, for all $n\geq 1$, i.e.~$(x_n)\subset \{\G> r\}$.
 
 For the proof of \eqref{closure-of-levels-lemma-1-h2} we 
 first observe that ${\{\G\geq  r   \}} = {\{\hat \G\geq  r   \}} \cap X$, and thus we find
 \begin{displaymath}
    \overline{\{\G\geq  r   \}}^E 
    = \overline{\{\hat \G\geq  r   \} \cap X}^E 
    \subset \overline{\{\hat \G\geq  r   \}}^E \cap \overline X 
    =   {\{\hat \G\geq  r   \}} \cap \overline X 
    = {\{\hat \G\geq  r   \}} \, ,
 \end{displaymath}
where in the second to  last step we used 
 both the continuity of $\hat \G$ and the fact that $\overline X$ is closed.
\end{proof}

\begin{lemma}\label{closure-for-strict-mono}
   Let $E$ be a normed space, $X\subset E$ be a convex set and 
 $\G:X\to \R$ be a   functional that has a continuous and strictly quasi-monotone
 extension $\hat \G: \overline X\to \R$.
 Then, for all $r\in \mathring{\G(X)}$  
 we have 
 \begin{displaymath}
    \{\hat \G\geq r\} = \overline {\{\G\geq  r   \}}^E\, .
 \end{displaymath}
\end{lemma}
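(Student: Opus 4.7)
My plan is to prove the two inclusions separately. The inclusion $\overline{\{\G\geq r\}}^E\subset\{\hat\G\geq r\}$ is already supplied by \eqref{closure-of-levels-lemma-1-h2} in Lemma \ref{closure-of-levels-lemma-1} and requires nothing beyond the continuity of $\hat\G$, so it can be disposed of in one line. The whole work therefore goes into the reverse inclusion $\{\hat\G\geq r\}\subset\overline{\{\G\geq r\}}^E$.

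To handle the reverse inclusion I would fix $x\in\{\hat\G\geq r\}$ and split into two cases. If $\hat\G(x)>r$, then \eqref{closure-of-levels-lemma-1-h1} immediately gives $x\in\overline{\{\G>r\}}^E\subset\overline{\{\G\geq r\}}^E$, which settles this case without using strict quasi-monotonicity. The delicate case is $\hat\G(x)=r$: here I would use the hypothesis $r\in\mathring{\G(X)}$ to produce some $x_0\in X$ with $\G(x_0)>r$, simply because the interior of $\G(X)$ meets the open half-line $(r,\infty)$. Since $X$ is convex, so is $\overline X$, and hence the whole segment $x_t:=(1-t)x+tx_0$ lies in $\overline X$ for $t\in[0,1]$.

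Now I would invoke the strict quasi-monotonicity of $\hat\G$ on $\overline X$. Since $\hat\G(x)=r\neq\hat\G(x_0)$, the strict quasi-concavity half of the definition yields
\begin{equation*}
 \hat\G(x_t)\;>\;\min\bigl\{\hat\G(x),\hat\G(x_0)\bigr\}\;=\;r\qquad\text{for every }t\in(0,1).
\end{equation*}
In particular each $x_t$ (for $t\in(0,1)$) lies in $\{\hat\G>r\}$, and another application of \eqref{closure-of-levels-lemma-1-h1} places $x_t\in\overline{\{\G>r\}}^E\subset\overline{\{\G\geq r\}}^E$. Letting $t\to 0^+$ gives $x_t\to x$ in $E$, and since $\overline{\{\G\geq r\}}^E$ is closed, the limit $x$ belongs to it as well. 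This completes the reverse inclusion.

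The only genuinely nontrivial ingredient is the step where strict quasi-monotonicity upgrades $\hat\G(x_t)\geq r$ to $\hat\G(x_t)>r$ on the open segment, and this is exactly where the hypothesis of Lemma \ref{closure-for-strict-mono} enters in an essential way; without strictness the segment could lie entirely in the level set $\{\hat\G=r\}$ and the approximation argument would collapse. I would also note in passing that the assumption $r\in\mathring{\G(X)}$ is used only to guarantee the existence of $x_0\in X$ strictly above $r$; no appeal to Lemma \ref{quasi-monotone-extension} is needed for the equality of interiors, since we only need one value in $\G(X)$ above $r$.
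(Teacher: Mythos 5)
Your proof is correct and follows essentially the same route as the paper: dispose of the easy inclusion via \eqref{closure-of-levels-lemma-1-h2}, reduce the hard inclusion to the case $\hat\G(x)=r$, pick a point with strictly larger value, and use strict quasi-concavity on the connecting segment together with \eqref{closure-of-levels-lemma-1-h1} to approximate $x$ from within $\{\G>r\}$. Your case split on $\hat\G(x)>r$ versus $\hat\G(x)=r$ and your direct use of $r\in\mathring{\G(X)}$ to produce $x_0$ are slightly cleaner than the paper's presentation (which threads the two cases through an approximating sequence in $X$ and cites Lemma~\ref{quasi-monotone-level-sets} for the existence of $x^+$), but the substance is identical.
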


\begin{proof}
 {``$\supset$''.} This follows from inclusion  \eqref{closure-of-levels-lemma-1-h2} of Lemma \ref{closure-of-levels-lemma-1}.
%

 {``$\subset$''.} Let us fix an $x\in \{\hat \G\geq r\}$.  Since $\{\hat \G\geq r\} \subset  \overline X$, there then exists 
 a sequence $(x_n)\subset X$ with $x_n\to x$. Clearly, if $x_n\in \{\G\geq r\}$
 for infinitely many $n$, then there is nothing left to prove,
 and hence we assume that  $x_n\in \{\G< r\}$ for all $n\geq 1$. The continuity of $\hat \G$ then yields 
 $\G(x_n) \to \hat \G(x)$, and therefore we conclude that $\hat \G(x) \leq r$, that is $\hat \G(x) = r$.
 Let us now fix an $x^+\in \{\G>r\}$, which exists by Lemma  \ref{quasi-monotone-level-sets}
 and the fact that $\G$ is quasi-monotone.
 For $t\in [0,1]$ we further define  $x(t):= (1-t) x + tx^+$. Since $\overline X$ 
 inherits its convexity from $X$ and $x\in \overline X$, $x^+\in X$
 we then know that $x(t) \in \overline X$ for all $t\in [0,1]$.
 Moreover, the strict concavity of $\hat \G$ ensures $\hat\G(x(t)) > \min\{\G(x), \G(x^+)\} = r$ for all $t\in (0,1)$
 and thus we conclude by \eqref{closure-of-levels-lemma-1-h1} that 
 \begin{displaymath}
  x(t) \in \{\hat \G >r\} \subset  \overline {\{\G> r   \}}^E
 \end{displaymath}
 for all $t\in (0,1)$. For all $n\geq 2$ there thus exist an $x_n^+\in \{\G> r   \}$ 
 with $\enorm{x_n^+ - x(1/n)}\leq 1/n$.
 Since $\enorm{x(1/n) - x} \leq n^{-1}\enorm{x-x^+}$ we then obtain $x_n^+\to x$, which finishes the proof.
\end{proof}

\section{Proofs for Section \ref{sec:exist}}\label{sec:proof-1}


\begin{proof}[Proof of Lemma \ref{F-is-in-kernel}]
 Let us fix a $y\in F$. Since $F= \spann (-\xstar + B)$, there then exists $\a_1,\dots,\a_n\in \R$
and $x_1,\dots,x_n\in B$ such that $y = \sum_{i=1}^n \a_i(-\xstar+ x_i)$. By the linearity of $\p'$, this 
yields
 \begin{displaymath}
  \langle \p', y\rangle 
= \sum_{i=1}^n \a_i \bigl(\langle \p', x_i\rangle - \langle \p',\xstar\rangle\bigr)
= 0\, ,
 \end{displaymath}
where in the last step we used $\langle \p', x_i\rangle =1= \langle \p',\xstar\rangle$.
Now $\xstar\not\in F$ 
follows from the just established $F\subset \ker\p'$  and $\langle \p',\xstar\rangle= 1$.
In addition, we immediately obtain $F\cap \R\xstar = \{0\}$, and thus $F\oplus \R\xstar$ is indeed a direct sum.
Moreover, the equality  $F\oplus \R\xstar = \spann B$ follows from 
\begin{displaymath}
 \sum_{i=1}^n \a_i(-\xstar + x_i) + \a_0 \xstar = \sum_{i=1}^n \a_i x_i + \Bigl( \a_0 - \sum_{i=1}^n \a_i\Bigr)\xstar\, ,
\end{displaymath}
which holds for all $n\in \N$, $\a_0,\dots,\a_n\in\R$, and $x_1,\dots,x_n\in B$.
Now, $\hnorm\cdot $ can be 
constructed in the described way. Here we note, that the definition of $\hnorm\cdot$ resembles a standard way 
of defining norms on direct sums,  and thus $\hnorm\cdot$ is indeed a norm.
Furthermore,
$\enorm\cdot \leq \hnorm\cdot$ immediately follows from the construction of $\hnorm\cdot$ and 
the assumed $\enorm \cdot \leq \fnorm\cdot$. Finally, 
$\fnorm \cdot = \hnorm\cdot$ on $F$ is obvious and so is $B-B\subset F$.
\end{proof}


%
%
%
%

Our next little lemma 
%
shows that the space $H$ can also be generated from $F$ and an \emph{arbitrary} element of $B$.

\begin{lemma}\label{generating-H}
 If \textbf{B1} and \textbf{B2} are satisfied, then we have $F\oplus \R x_0 = H$ for all $x_0\in B$.
\end{lemma}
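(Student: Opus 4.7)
The plan is to reduce the claim to the decomposition $H = F \oplus \R\xstar$ already established in Lemma \ref{F-is-in-kernel}, by observing that any $x_0 \in B$ differs from $\xstar$ by an element of $F$.

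First I would note that for any $x_0 \in B$ we have $x_0 - \xstar = -\xstar + x_0 \in A \subset F$, simply by the definition $A = -\xstar + B$ and $F = \spann A$. This will be the single geometric fact driving the argument.

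Next I would verify that $F + \R x_0$ is a direct sum. Suppose $\a x_0 \in F$ for some $\a \in \R$. By Lemma \ref{F-is-in-kernel} we have $F \subset \ker \p'$, so $\a \langle \p', x_0\rangle = 0$. Since $x_0 \in B \subset \{\p' = 1\}$ by \textbf{B1}, this forces $\a = 0$, hence $\a x_0 = 0$. Thus $F \cap \R x_0 = \{0\}$.

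Finally I would show $F + \R x_0 = H$. The inclusion $\subset$ is immediate since $F \subset H$ and $x_0 \in B \subset H$. For the reverse inclusion, take any $h \in H$. By Lemma \ref{F-is-in-kernel} we can write $h = y + \a \xstar$ with $y \in F$ and $\a \in \R$. Using $\xstar = x_0 - (x_0 - \xstar)$ together with $x_0 - \xstar \in F$, we obtain
\begin{displaymath}
h = y - \a(x_0 - \xstar) + \a x_0 \in F + \R x_0,
\end{displaymath}
which finishes the proof. There is no serious obstacle here; the lemma is essentially a bookkeeping observation that replaces the distinguished $\xstar$ from \textbf{B2} by an arbitrary element of $B$, relying only on B1 (to rule out the trivial intersection) and the already-established kernel property of $F$.
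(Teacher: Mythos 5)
Your proof is correct and follows essentially the same route as the paper: both arguments rest on the two facts that $-\xstar + x_0 \in F$ and that $H = F \oplus \R\xstar$ (Lemma \ref{F-is-in-kernel}), with the direct-sum property coming from $F \subset \ker\p'$ and $\langle\p', x_0\rangle = 1$. The only difference is cosmetic: the paper argues the reverse inclusion via $\R\xstar \subset F \oplus \R x_0$ at the level of subspaces and uses an explicit spanning calculation for $F \oplus \R x_0 \subset H$, while you substitute $\xstar = x_0 - (x_0 - \xstar)$ elementwise and note $F \subset H$, $x_0 \in H$ directly — slightly more streamlined but not substantively different.
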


\begin{proof}
By $\p'(x_0)=1$ and the inclusion  $F\subset \ker \p'$ established in Lemma \ref{F-is-in-kernel}, 
we see that $x_0\not\in F$, and hence 
$F\cap \R x_0 = \{0\}$. 

The inclusion $F\oplus \R x_0 \subset H$ follows from the equality $H= \spann B$ established in Lemma \ref{F-is-in-kernel}
and
\begin{displaymath}
  \sum_{i=1}^n \a_i(-\xstar + x_i) + \a_0 x_0= \sum_{i=0}^n \a_i x_i - \sum_{i=1}^n \a_i\xstar\, ,
\end{displaymath}
which holds for all $n\in \N$, $\a_0,\dots,\a_n\in\R$, and $x_1,\dots,x_n\in B$.

To prove the converse inclusion, we first note that $-\xstar = (-\xstar + x_0) - x_0 \in F\oplus \R x_0$
implies $\R \xstar \subset F\oplus \R x_0$.  Since we also have $F\subset F\oplus \R x_0$,
we conclude by Lemma \ref{F-is-in-kernel} that  
$H = F\oplus \R \xstar \subset F\oplus \R x_0$.
\end{proof}

Our next lemma 
 shows that the cone decomposition \textbf{B3} makes it easier to decide whether a linear 
functional is continuous.

\begin{lemma}\label{continuity-E-test}
 Let 
 \textbf{B3} be satisfied. Then 
 a linear map $z':H\to \R$ is continuous with respect to $\enorm\cdot$, if and only if for all sequences 
$(z_n) \subset \cone B$ with $\enorm {z_n}\to 0$ we have $\langle z',z_n\rangle \to 0$.
\end{lemma}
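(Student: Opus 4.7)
The plan is to establish the two implications separately, treating the forward direction as immediate and focusing on the reverse direction via a standard boundedness-from-sequential-continuity argument, then combining with \textbf{B3}.

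For the forward implication, I would simply note that if $z'$ is $\enorm\cdot$-continuous on $H$, then $\enorm{z_n} \to 0$ forces $z_n \to 0$ in $\enorm\cdot$, so by continuity and linearity $\langle z', z_n\rangle \to \langle z', 0\rangle = 0$, regardless of whether $z_n$ lies in $\cone B$ or not.

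For the reverse implication, I would first show that the sequential hypothesis implies boundedness of $z'$ on $\cone B \cap B_E$. The argument is a standard contradiction: if no such bound existed, one could find $z_n \in \cone B$ with $\enorm{z_n} \leq 1$ and $|\langle z', z_n\rangle| \geq n$; scaling to $\tilde z_n := z_n/\sqrt{n}$ (or $z_n/n$), one still has $\tilde z_n \in \cone B$ since $\cone B$ is closed under multiplication by nonnegative scalars, and $\enorm{\tilde z_n} \to 0$, yet $|\langle z', \tilde z_n\rangle| \to \infty$, contradicting the hypothesis. This yields a constant $M$ with $|\langle z', z\rangle| \leq M \enorm{z}$ for every $z \in \cone B$.

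Once the bound on $\cone B$ is in hand, I would invoke \textbf{B3}: for every $z \in H$ there exist $z^+, z^- \in \cone B$ with $z = z^+ - z^-$ and $\enorm{z^+} + \enorm{z^-} \leq K \enorm{z}$. Linearity and the triangle inequality then give
\begin{displaymath}
|\langle z', z\rangle| \leq |\langle z', z^+\rangle| + |\langle z', z^-\rangle| \leq M\bigl(\enorm{z^+} + \enorm{z^-}\bigr) \leq MK \enorm{z}\, ,
\end{displaymath}
which is precisely the operator-norm bound characterizing $\enorm\cdot$-continuity of the linear map $z'$ on $H$. No step here strikes me as a real obstacle; the only non-cosmetic point is making sure that the rescaled sequence used in the contradiction argument indeed remains inside $\cone B$, which is automatic because $\cone B$ is a cone, and that the decomposition in \textbf{B3} is applied componentwise in absolute value before collecting the two terms.
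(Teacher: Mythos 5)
Your proof is correct and uses the same essential ingredient as the paper's: the cone decomposition from \textbf{B3} to pass from behavior on $\cone B$ to continuity on all of $H$. The paper's argument is slightly more direct — rather than first extracting an explicit operator-norm bound $|\langle z',z\rangle|\leq M\enorm{z}$ on $\cone B$ via a contradiction/rescaling step, it simply takes an arbitrary sequence $(z_n)\subset H$ with $\enorm{z_n}\to 0$, applies \textbf{B3} to each $z_n$ to obtain $z_n^\pm\in\cone B$ with $\enorm{z_n^\pm}\to 0$, and concludes at once from linearity and the hypothesis; your detour through boundedness on the cone is valid (and equivalent for linear maps on normed spaces) but adds an extra step the sequential formulation makes unnecessary.
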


\begin{proof}
``$\Rightarrow$ '': Since $\cone B\subset H$ by the definition of $H$, this implication is trivial. 

``$\Leftarrow$ '': By the linearity of $z'$ it suffices to show that $z'$ is $\enorm\cdot$-continuous at 0.
To show the latter, we fix a sequence $(z_n)\subset H$ with $\enorm {z_n}\to 0$. By \textbf{B3}
  there then exist sequences $(z_n^-), (z_n^+)\subset \cone B$ with $z_n = z_n^+ - z_n^-$
and $\enorm{z_n^-} + \enorm{z_n^+} \leq K\enorm{z_n}$. Consequently, we obtain $\enorm{z_n^-} \to 0$ and 
$\enorm{z_n^+}\to 0$, and thus our assumption together with the linearity of $z'$  yields 
$\langle z',z_n\rangle = \langle z',z_n^+\rangle - \langle z',z_n^-\rangle \to 0$
\end{proof}

In the following, we almost always need the assumption
 \textbf{B2} to be satisfied. In this case, we sometimes need to 
 consider two metrics on $B$, namely 
 the metric $d_E$ 
induced by  $\enorm\cdot$ and the 
metric $d_F$ induced by $\fnorm\cdot$ via translation, that is
\begin{align}\label{def-df}
d_F(x_1,x_2) := \fnorm{(-\xstar+x_1) - (-\xstar+x_2) } = \fnorm{x_1-x_2} = \hnorm{x_1-x_2}\, ,\qquad \quad x_1,x_2\in B,
\end{align}
where the last identity follows from Lemma \ref{F-is-in-kernel} provided that \textbf{B1} also holds.
Note that the  assumed   $\enorm \cdot \leq \fnorm\cdot$ immediately implies 
$d_E(x_1,x_2) \leq d_F(x_1,x_2)$ for all $x_1,x_2\in B$, and thus the identity map 
$\id:(B, d_F)\to (B, d_E)$ is   Lip\-schitz continuous.

The following result 
 collects some simple  properties of the sets 
$\{\G<r\}$ and $\{\G>r\}$ we wish to separate.

\begin{lemma}\label{convex-level-sets}
Let  \textbf{B2} and \textbf{G1} be satisfied. 
 Then  $\G(B)$ is an interval, and, for all $r\in \mathring \G(B)$, the sets
	$\{\G<r\}$ and $\{\G>r\}$ are non-empty, convex, and open in $B$ with respect to  $d_F$.
\end{lemma}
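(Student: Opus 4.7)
The plan is to reduce everything to the setting of Lemmas~\ref{quasi-m-char} and~\ref{quasi-monotone-level-sets} by translating $B$ into the normed space $(F,\fnorm\cdot)$. Concretely, I would set $X:=-\xstar+B$ and $\tilde\G:X\to\R$, $\tilde\G(x):=\G(x+\xstar)$. Since $B$ is convex so is $X$, and by the defining identity \eqref{def-df} the metric $d_F$ on $B$ coincides, under the translation $x\mapsto x+\xstar$, with the subspace metric that $X$ inherits from $(F,\fnorm\cdot)$. In particular, the $\fnorm\cdot$-continuity of $\G$ assumed in \textbf{G1} transfers to $\fnorm\cdot$-continuity of $\tilde\G$ on $X\subset F$, a convex subset of the topological vector space $(F,\fnorm\cdot)$; and the level sets $\{\tilde\G=r\}$ are translates of $\{\G=r\}$, hence convex by \textbf{G1}.

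Next, Lemma~\ref{quasi-m-char}, applied to the continuous $\tilde\G$ on $X\subset F$, yields that $\tilde\G$ is quasi-monotone and that, for every $r\in\im\tilde\G$, the sets $\{\tilde\G<r\}$ and $\{\tilde\G>r\}$ are convex. Back-translating via the affine bijection $x\mapsto x+\xstar$ between $X$ and $B$, the same conclusions hold for $\G$ on $B$. Lemma~\ref{quasi-monotone-level-sets}, applied to $\tilde\G$, then shows that $\im\tilde\G=\G(B)$ is an interval and that $\{r<\tilde\G<s\}$ is non-empty whenever $r<s$ both lie in $\im\tilde\G$.

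For the remaining claims I fix $r\in\mathring{\G(B)}$. Since $\G(B)$ is an interval whose interior contains $r$, there exist $r^-,r^+\in\G(B)$ with $r^-<r<r^+$, and any pre-images $x^-\in\G^{-1}(\{r^-\})$ and $x^+\in\G^{-1}(\{r^+\})$ witness that $\{\G<r\}$ and $\{\G>r\}$ are non-empty. Their convexity was already recorded. Finally, openness with respect to $d_F$ is immediate from the $\fnorm\cdot$-continuity of $\G$: under the translation identification above, $\{\G<r\}=\G^{-1}((-\infty,r))$ and $\{\G>r\}=\G^{-1}((r,\infty))$ are pre-images of open subsets of $\R$ under a $d_F$-continuous map, hence open in $(B,d_F)$.

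No step of this plan is genuinely hard; the only point deserving care is the identification of $d_F$ on $B$ with the subspace metric inherited from $(F,\fnorm\cdot)$ via $x\mapsto x+\xstar$, which is what lets us invoke the two general lemmas in the topological-vector-space setting for which they are phrased. Once that identification is in place, everything else follows by direct reading off of their conclusions.
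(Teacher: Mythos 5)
Your proof is correct and follows essentially the same route as the paper's (very terse) proof, which simply observes that openness is immediate from $d_F$-continuity and then invokes Lemmas~\ref{quasi-m-char} and~\ref{quasi-monotone-level-sets} for the rest. The one thing you make explicit that the paper leaves implicit is the translation of $B$ by $-\xstar$ into the normed space $(F,\fnorm\cdot)$ so that those two lemmas, which are stated for convex subsets of a topological vector space, become literally applicable; that is the right way to make the invocation rigorous and is consistent with how $d_F$ is defined in~\eqref{def-df}.
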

 
\begin{proof}
Clearly, the sets $\{\G<r\}$ and $\{\G>r\}$
are  open with respect to $d_F$, since $\G$ is assumed to be continuous with respect to 
$d_F$. 
The remaining assertions follow from 
the Lemma \ref{quasi-m-char} and \ref{quasi-monotone-level-sets}.
%
%
\end{proof}

Our next goal is to investigate relative interiors of subsets of $A$. We begin with a result 
that shows the richness of $\relint AF$.

\begin{lemma}\label{all-interirors}
Let   \textbf{B2*} and \textbf{G1} be satisfied. 
Then,  for all $r\in I$, there exists an $x\in \{\G=r\}$ such that $-\xstar+x\in\relint AF$.
\end{lemma}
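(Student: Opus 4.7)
The plan is to connect $\xstar$, whose shift $-\xstar+\xstar=0$ lies in $\relint AF$ by \textbf{B2*}, to a point of $B$ where $\G$ attains a value on the opposite side of $r$, and then to use the intermediate value theorem along the joining segment to land exactly on the level set $\{\G=r\}$. The relative-interior membership will then follow from the classical line-segment principle for convex sets.

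First I would dispose of the trivial case $\G(\xstar)=r$, where $x:=\xstar$ works immediately. Otherwise, I may assume without loss of generality that $\G(\xstar)<r$ (the other case is symmetric). Since $r\in I=\mathring{\G(B)}$, I can pick some $y\in B$ with $\G(y)>r$ and parameterize the segment by $\g(t):=(1-t)\xstar+ty$, which lies in $B$ by convexity and is $\fnorm\cdot$-continuous in $t$. Combining \textbf{G1} with Lemmas~\ref{quasi-m-char} and \ref{quasi-monotone} shows that $\G$ is quasi-monotone, so $\G\circ\g$ is a continuous monotone function on $[0,1]$ with $\G(\g(0))<r<\G(\g(1))$. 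The intermediate value theorem then produces $t^*\in(0,1)$ with $\G(\g(t^*))=r$.

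Setting $x:=\g(t^*)$, one computes
\[
-\xstar+x \;=\; t^*(-\xstar+y) \;=\; (1-t^*)\cdot 0 + t^*\cdot(-\xstar+y),
\]
where $0\in\relint AF$ by \textbf{B2*} and $-\xstar+y\in A$. Since $0\in A$ (as $-\xstar+\xstar=0$), the affine hull of the convex set $A$ equals the linear subspace $\spann A=F$, so $\relint AF$ coincides with the standard interior of $A$ relative to its affine hull. The classical line-segment principle for relative interiors of convex sets then yields $-\xstar+x\in\relint AF$, since $t^*\in[0,1)$, as desired.

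I do not expect a serious obstacle. The only subtlety to keep an eye on is the identification of $\relint AF$ with the standard relative interior of $A$ in its affine hull, which is what licenses the application of the line-segment principle; once $0\in A$ is noted, this identification is automatic.
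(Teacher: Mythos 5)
Your proof is correct and follows essentially the same route as the paper's: reduce to the nontrivial case, join $\xstar$ by a segment to a point of $B$ whose $\G$-value lies on the other side of $r$, apply the intermediate value theorem to land on $\{\G=r\}$, and then exploit $0\in\relint AF$ together with the convexity of $A$. The only difference is cosmetic: where you invoke the line-segment principle for the interior of a convex set in a topological vector space, the paper reproves that fact inline with an explicit $\epsilon$-$\delta$ computation.
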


\begin{proof}
 If $\xstar \in\{\G=r\}$ there is nothing to prove, and hence we may assume without loss
of generality that $\xstar \in\{\G>r\}$. Let us write $r^\star := \G(\xstar)$. Now, since 
$r\in I$ and $I$ is an open interval by Lemma \ref{convex-level-sets},
there exists an $s\in I$ with $s<r$. We fix an $x_0\in \{\G=s\}$ and, for $\lb\in [0,1]$,
we consider $x_\lb := \lb \xstar + (1-\lb)x_0$. Then we have $\G(x_0) = s < r<r^\star = \G(\xstar)$, and thus 
the intermediate theorem shows that there exists a $\lb\in (0,1)$ with $\G(x_\lb) = r$.
Our goal is to show that this $x_\lb$ satisfies $-\xstar+x_\lb\in\relint AF$. To this end, we recall that 
$0\in \relint AF$, which is ensured by \textbf{B2*},
gives an $\e>0$ such that for all $y\in F$ satisfying  $\fnorm{y}\leq \e$
we actually have $y\in A$. Let us write $\d:= \lb \e$. Then it suffices to show that, for all $y\in F$
satisfying $\fnorm {-\xstar + x_\lb - y}\leq \d$, we have $y\in A$. Consequently, let us fix such a $y\in F$.
For 
\begin{displaymath}
 \tilde x := \xstar + \frac {y-(1-\lb) (-\xstar + x_0)}{\lb}
\end{displaymath}
we then have $y = \lb(-\xstar + \tilde x) + (1-\lb) (-\xstar + x_0)$. By the convexity of $A$ and $-\xstar + x_0\in A$,
it thus suffices to 
show $-\xstar + \tilde x\in A$. However, the latter follows from 
\begin{align*}
 \fnorm{-\xstar + \tilde x} 
& = \lb^{-1} \fnorm{y - (1-\lb) (-\xstar + x_0)} \\
& = \lb^{-1} \fnorm{y - x_\lb + \xstar} \\
& \leq \lb^{-1} \d\, ,
\end{align*}
and thus the assertion is proven.
\end{proof}

Our last elementary result shows that having non-empty relative interior in $A$ implies 
a non-empty relative interior in $F$. This result will later be applied to translates of 
the open, non-empty sets $\{\G<r\}$ and $\{\G>r\}$.

\begin{lemma}\label{relative-interiors}
Let \textbf{B2*} be satisfied, and 
   $K\subset A$ be an arbitrary subset with $\relint KA\neq \emptyset$, that is $K$
 has  non-empty relative $\fnorm\cdot$-interior in $A$. 
Then, for all $y\in \relint KA$, there exists a 
$\d_y\in (0,1/2]$ such that $(1-\d)y \in \relint K F$ for all $\d\in (0,\d_y]$.
In particular, we have $\relint KF\neq \emptyset$.
\end{lemma}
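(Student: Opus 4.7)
The plan is to exploit the two sources of "thickness" available: the $\fnorm\cdot$-ball around $y$ inside $A$ (of radius $\e_1$) witnessing $y\in\relint KA$, and the $\fnorm\cdot$-ball around $0$ inside $A$ (of radius $\e_0$) guaranteed by \textbf{B2*}. Shrinking $y$ by the factor $(1-\d)$ pulls $y$ a little toward the origin, leaving room on the $F$-side to perturb it within $A$ using the B2*-ball, while still remaining close enough to $y$ to stay in $K$.

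First I would fix $y\in\relint KA$ and extract $\e_1>0$ with $\{z\in A:\fnorm{z-y}\leq \e_1\}\subset K$, together with $\e_0>0$ from \textbf{B2*} such that every $w\in F$ with $\fnorm w\leq \e_0$ lies in $A$. The trivial case $y=0$ is immediate: combining the two balls gives $\{z\in F:\fnorm z\leq \min(\e_0,\e_1)\}\subset K$, so any $\d_y\in(0,1/2]$ works. Assume henceforth $y\neq 0$ and set
\[
  \d_y := \min\!\bigl(\tfrac12,\; \tfrac{\e_1}{2\fnorm y}\bigr) \in (0,1/2].
\]

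Next, for $\d\in(0,\d_y]$ I would exhibit the $F$-neighborhood of $(1-\d)y$ contained in $K$ with radius $\e_2:=\min(\d\e_0,\e_1/2)>0$. Given any $z\in F$ with $\fnorm{z-(1-\d)y}\leq \e_2$, set $w:=\d^{-1}(z-(1-\d)y)\in F$. Then $\fnorm w\leq \e_2/\d\leq \e_0$, so $w\in A$ by \textbf{B2*}. Writing
\[
  z = (1-\d)y + \d w,
\]
and using that $A$ is convex (inherited from $B$), we conclude $z\in A$. It remains to check that $z$ is in the $K$-ball around $y$:
\[
  \fnorm{z-y} = \fnorm{(z-(1-\d)y)-\d y} \leq \e_2 + \d\fnorm y \leq \tfrac{\e_1}{2}+\tfrac{\e_1}{2} = \e_1,
\]
hence $z\in K$. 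Thus $(1-\d)y\in\relint KF$, as required, and choosing any $y\in\relint KA$ yields $(1-\d_y)y\in\relint KF$, proving non-emptiness.

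The argument is essentially a single convex-combination trick, so I do not foresee a significant obstacle; the only point that needs mild care is coupling the choice of $\e_2$ to $\d$ in two different ways (one bound of order $\d$ for the B2*-room, and one constant bound for staying in the $K$-ball), and treating $y=0$ separately so that $\e_1/(2\fnorm y)$ is well-defined.
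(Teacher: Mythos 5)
Your proof is correct and follows essentially the same strategy as the paper: both combine the ball around $y$ witnessing $y\in\relint KA$ with the ball around $0$ from \textbf{B2*} and close the argument by a convex-combination step in $A$. The only (minor) difference is that you write the perturbed point as a two-point convex combination $(1-\d)y+\d w$ with $w\in A$, while the paper splits the perturbation over $y$, a rescaled element of $\e_0 B_F$, and $0$; the two computations are equivalent.
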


\begin{proof}
By the assumed  $0\in \relint AF$, there exists an $\e_0\in (0,1]$ such that 
$\e_0B_F \subset A$. Moreover, the assumption $y\in \relint KA$ yields an $\e_1\in (0,\e_0]$ such that
\begin{equation}\label{relative-interiors-h1}
 (y+\e_1B_F) \cap A \subset K\, .
\end{equation}
We define $\d_y := \e_1/(\e_1 + \fnorm y)$. Then, it suffices to show that
\begin{equation}\label{relative-interiors-h2}
 (1-\d)y+\e_1\d B_F  \subset K
\end{equation}
 for all $\d\in(0,\d_y]$. To show the latter, we fix a $y_1\in \e_1\d B_F$.
An easy estimate then 
shows that $\fnorm{-\d y +y_1} \leq \d\fnorm y + \fnorm{y_1} \leq \d(\fnorm y + \e_1)\leq \e_1$,
and hence we obtain
\begin{displaymath}
 (1-\d)y + y_1 = y -\d y +y_1 \in (y + \e_1 B_F) \, .
\end{displaymath}
By \eqref{relative-interiors-h1} it thus suffices to show $(1-\d)y + y_1 \in A$.
Now, if $y_1=0$, then the latter immediately follows from 
$(1-\d)y + y_1 = (1-\d)y + \d \cdot 0$, the convexity of $A$, and $0\in A$.
Therefore, it remains to consider the case $y_1\neq 0$. Then we have
\begin{displaymath}
 \frac {\e_0}{\fnorm {y_1}} y_1 \in \e_0 B_F \subset A\, ,
\end{displaymath}
and $\frac {\fnorm {y_1}} {\e_0} \leq \frac{\e_1\d}{\e_0} \leq \d$. Consequently, the convexity of $A$ and $0\in A$ yield
\begin{displaymath}
 (1-\d)y + y_1 
= (1-\d) y 
+ \frac {\fnorm {y_1}} {\e_0} \biggl(\frac {\e_0}{\fnorm {y_1}} y_1\biggr) 
+ \biggl( \d - \frac {\fnorm {y_1}} {\e_0}\biggr) \cdot 0
\in A\, ,
\end{displaymath}
and hence \eqref{relative-interiors-h2} follows.
\end{proof}

Our next goal is to move towards the proof of Theorem \ref{unique-separation-char-1}.
This is done in a couple of intermediate results that successively establish 
more properties of certain, separating functionals.
We begin with a somewhat crude separation of convex subsets in $A$ that have an non-empty relative interior.

\begin{lemma}\label{weak-separation-in-A}
Let   \textbf{B2*} be satisfied, and 
   $K_-, K_+ \subset A$ be two convex sets with $\relint{K_\pm}A \neq \emptyset$ and 
$K_- \cap \relint{K_+}F = \emptyset$. Then there exist a $y'\in F'$ and an $s\in \R$
such that 
\begin{align*}
 K_- &\subset \{y'\leq s\} &\mbox{ and } \qquad \qquad \relint{K_-}F &\subset \{y'<s\}\, ,\\
K_+ &\subset \{y'\geq s\} & \mbox{ and }  \qquad\qquad \relint{K_+}F &\subset \{y'>s\}\, .
\end{align*}
Moreover, if $s\leq 0$, then we actually have $\relint{K_-}A \subset \{y'<s\}$,
and, if $s\geq 0$,  we  have $\relint{K_+}A \subset \{y'>s\}$.
\end{lemma}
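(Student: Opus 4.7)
The plan is to apply the Hahn--Banach separation theorem inside the normed space $(F, \fnorm\cdot)$, then promote the separation to the remaining sides by standard convexity arguments, and finally handle the ``moreover'' clause by combining the strict separation on $\relint{K_-}F$ and $\relint{K_+}F$ with the scaling provided by Lemma \ref{relative-interiors}.

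First, I would note that Lemma \ref{relative-interiors} applied to $K_+$ yields $\relint{K_+}F \neq \emptyset$. This set is open and convex in $(F, \fnorm\cdot)$ and, by assumption, disjoint from the convex set $K_-$. The geometric Hahn--Banach theorem therefore supplies a nonzero $y'\in F'$ and an $s\in \R$ with $\langle y', x\rangle \leq s$ for all $x \in K_-$ and $\langle y', x\rangle > s$ for all $x \in \relint{K_+}F$. To extend the lower bound to all of $K_+$, I would fix any $x \in K_+$ and any $x_0 \in \relint{K_+}F$: the standard fact that $(1-t)x_0 + tx \in \relint{K_+}F$ for all $t \in [0,1)$ gives $\langle y', (1-t)x_0 + tx\rangle > s$, and letting $t\to 1$ yields $\langle y', x\rangle \geq s$. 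For the strict inequality on $\relint{K_-}F$, if some $x\in \relint{K_-}F$ satisfied $\langle y', x\rangle = s$, then picking $\e>0$ with $x+\e B_F \subset K_-$ and any $e\in F$ with $\langle y', e\rangle >0$ (which exists since $y'\neq 0$) would give $x+\e e \in K_-$ with $\langle y', x+\e e\rangle > s$, contradicting $K_-\subset \{y'\leq s\}$.

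For the ``moreover'' clause, assume $s\leq 0$ and fix $y \in \relint{K_-}A$. Lemma \ref{relative-interiors} provides a $\d_y\in (0,1/2]$ with $(1-\d)y \in \relint{K_-}F$ for all $\d\in (0,\d_y]$, whence $(1-\d)\langle y', y\rangle < s$. Combined with the already established $\langle y', y\rangle\leq s$, it only remains to rule out equality; but if $\langle y', y\rangle = s$, then $(1-\d)s < s$ forces $\d s > 0$, contradicting $\d>0$ together with $s\leq 0$. The case $s\geq 0$ and $\relint{K_+}A$ is entirely symmetric, using the opposite sign and the separation $K_+\subset\{y'\geq s\}$.

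The main obstacle is the ``moreover'' clause, because $\relint{K_-}A$ can be strictly larger than $\relint{K_-}F$, so the naive estimate $\langle y', y\rangle \leq s$ coming directly from the separation is insufficient on its own; the essential step is therefore the radial contraction toward the origin given by Lemma \ref{relative-interiors}, and one must chase the equality case carefully, exploiting the sign of $s$ in exactly the way prescribed by the hypothesis.
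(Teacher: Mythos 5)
Your proof is correct and follows essentially the same route as the paper: Hahn--Banach separation of $K_-$ from the non-empty $F$-interior of $K_+$, followed by the radial contraction $y\mapsto(1-\d)y$ from Lemma~\ref{relative-interiors} to handle the ``moreover'' clause. The only local variations are that you derive $K_+\subset\{y'\geq s\}$ by a limit argument rather than invoking the Mazur-type theorem directly, and you prove $\relint{K_-}F\subset\{y'<s\}$ by a perturbation within $\relint{K_-}F$ instead of the paper's convex combination with a point of $\relint{K_+}F$; both variants are valid and of comparable length.
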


\begin{proof}
 By Lemma \ref{relative-interiors} and the assumed $\relint{K_\pm}A \neq \emptyset$
we find $\relint{K_\pm}F \neq \emptyset$.
By a version of the Hahn-Banach separation theorem, see e.g.~\cite[Thm.~2.2.26]{Megginson98},
there thus exist a $y'\in F'$ and an $s\in \R$ such that 
\begin{align*}
 K_- &\subset \{y'\leq s\} \\
K_+ &\subset \{y'\geq s\} \\ 
\relint{K_+}F &\subset \{y'>s\}\, .
\end{align*}
Let us first show $\relint{K_-}F \subset \{y'<s\}$. To this end, we fix a $y_-\in \relint {K_-}F$
and a $y_+\in \relint{K_+} F$. Since $\relint{K_-} F$ is open in $F$, there then exists 
a $\lb\in (0,1)$ such that 
 \begin{displaymath}
  \lb y_+ + (1-\lb) y_- = y_- + \lb(y_+-y_-) \in \relint{K_-} F\subset K_-\, .
 \end{displaymath}
From the latter and the already obtained inclusions we conclude that
\begin{align*}
 s 
 \geq \bigl\langle y' ,  \lb y_+ + (1-\lb) y_- \bigr\rangle
 = \lb \langle y', y_+\rangle + (1-\lb) \langle y', y_-\rangle
 > \lb s + (1-\lb) \langle y', y_-\rangle\, .
\end{align*}
Now, some simple transformations together with $\lb\in (0,1)$ yield $\langle y', y_-\rangle < s$, i.e.~we
have shown $\relint{K_-}F \subset \{y'<s\}$.

Let us now show that $s\leq 0$ implies $\relint{K_-}A \subset \{y'<s\}$.
To this end, we assume
 that  there exists a $y\in \relint{K_-}A$ with $\langle y', y\rangle \geq s$.
Since $\relint{K_-}A\subset K_-$, the already established inclusion 
$K_- \subset \{y'\leq s\}$ then yields $\langle y', y\rangle = s$.
Moreover, by Lemma \ref{relative-interiors} there exists a $\d>0$ such that 
$(1-\d)y\in \relint {K_-}F$. From the previously established $\relint{K_-}F \subset \{y'<s\}$
we thus obtain
\begin{displaymath}
 s > \bigl\langle y', (1-\d) y\bigr\rangle = (1-\d) s\, .
\end{displaymath}
Clearly, this yields $\d s>0$, and since $\d>0$, we find $s>0$.
The remaining implication can be shown analogously.
\end{proof}

The next result refines the separation of Lemma \ref{weak-separation-in-A} under additional assumptions 
on the sets that are to be separated. 
Its assertion, but not its proof, mimics the first part of Step 2 of the proof of Theorem 5 of \cite{LambertXXa}.

\begin{proposition}\label{strong-separation-in-A}
Let  \textbf{B2*} be satisfied, and 
   $K_-, K_0, K_+ \subset A$ be mutually disjoint, non-empty convex sets with $\relint{K_\pm}A =K_\pm$
and $A= K_-\cup K_0\cup K_+$. Furthermore, assume that, for all $y\in K_0$ and $\e>0$,
we have $K_- \cap (y + \e B_F)\neq \emptyset$ and $K_+ \cap (y + \e B_F)\neq \emptyset$.
Then there exist a $y'\in F'$ and an $s\in \R$ such that 
\begin{align*}
 K_- &= \{y'< s\} \cap A \\
K_0 &= \{y'= s\} \cap A\\ 
K_+ &= \{y'>s\} \cap A\, .
\end{align*}
\end{proposition}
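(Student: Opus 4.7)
The plan is to apply Lemma \ref{weak-separation-in-A} to the pair $(K_-, K_+)$ for a crude separation, then use the relative openness of $K_\pm$ in $A$ together with the squeezing hypothesis on $K_0$ to upgrade this to the required strict equalities. The hypotheses of that lemma are verified immediately: $\relint{K_\pm}{A} = K_\pm \neq \emptyset$ by assumption, and $K_- \cap \relint{K_+}{F} \subset K_- \cap K_+ = \emptyset$ by disjointness. This produces $y' \in F'$ and $s \in \R$ with $K_- \subset \{y' \leq s\}$, $K_+ \subset \{y' \geq s\}$, strict inclusions on the $F$-relative interiors, and a one-sided strict $A$-inclusion determined by the sign of $s$. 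For $y \in K_0$, the squeezing hypothesis yields $\fnorm{\cdot}$-approximating sequences drawn from both $K_-$ and $K_+$, and $\fnorm{\cdot}$-continuity of $y'$ then forces $\langle y', y \rangle \leq s$ and $\langle y', y \rangle \geq s$; hence $K_0 \subset \{y' = s\} \cap A$.

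The principal difficulty is promoting the weak inclusions $K_- \subset \{y' \leq s\}$ and $K_+ \subset \{y' \geq s\}$ to strict inclusions on both sides, since Lemma \ref{weak-separation-in-A} supplies the $A$-strict version on only one side. Suppose $s \geq 0$; then the lemma already delivers $K_+ \subset \{y' > s\}$ strictly, so it remains only to rule out a point $y \in K_-$ with $\langle y', y \rangle = s$. Given such a $y$, pick any $y^+ \in K_+$, so that $\langle y', y^+ \rangle > s$, and examine the segment $y_\lambda := (1-\lambda) y + \lambda y^+$, which lies in $A$ by convexity. A direct computation gives $\langle y', y_\lambda \rangle > s$ for all $\lambda \in (0, 1]$, so every $y_\lambda$ avoids both $K_-$ and $K_0$ and therefore lies in $K_+$ by the partition. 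But $\fnorm{y_\lambda - y} = \lambda \fnorm{y^+ - y} \to 0$, while $\relint{K_-}{A} = K_-$ provides an $\fnorm{\cdot}$-neighborhood of $y$ in $A$ contained in $K_-$; for sufficiently small $\lambda$ this forces $y_\lambda \in K_-$, contradicting $K_- \cap K_+ = \emptyset$. The case $s \leq 0$ is symmetric, and the case $s = 0$ is covered by both, so $K_\pm \cap \{y' = s\} = \emptyset$ in all cases.

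Once the strict inclusions are in hand, the three equalities follow immediately. Every $y \in A$ lies in exactly one of the disjoint pieces $K_-, K_0, K_+$; if $\langle y', y \rangle < s$ then $y$ lies in neither $K_0 \subset \{y' = s\}$ nor $K_+ \subset \{y' > s\}$, forcing $y \in K_-$, and the cases $\langle y', y \rangle = s$ and $\langle y', y \rangle > s$ are handled in exactly the same manner. This yields the matching reverse inclusions $\{y' < s\} \cap A \subset K_-$, $\{y' = s\} \cap A \subset K_0$, and $\{y' > s\} \cap A \subset K_+$, which together with the forward inclusions already established complete the proof.
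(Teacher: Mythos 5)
Your proposal is correct, and the decisive step differs genuinely from the paper's argument. Both proofs apply Lemma~\ref{weak-separation-in-A}, establish $K_0 \subset \{y'=s\}\cap A$ by approximation and $\fnorm\cdot$-continuity, and WLOG assume $s\geq 0$ so that $K_+\subset\{y'>s\}$ strictly. Where you diverge is in ruling out $y\in K_-$ with $\langle y',y\rangle=s$. The paper first tries to \emph{locate} a parameter $\lambda$ on the segment to $K_+$ with $\langle y',y_\lambda\rangle=s$ (which would force $\langle y',y\rangle<s$ by a convexity computation), and when no such parameter is assumed to exist it falls back on a connectedness argument for $[0,1]$ using the relative openness of both $K_\pm$. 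You instead argue directly: under $\langle y',y\rangle=s$, every $y_\lambda$ with $\lambda>0$ satisfies $\langle y',y_\lambda\rangle>s$ and hence, by the partition and the already-established inclusions, lies in $K_+$; yet relative openness of $K_-$ alone forces $y_\lambda\in K_-$ for small $\lambda$. This is tighter — one invocation of relative openness replaces the IVT-plus-connectedness machinery — and it reaches the same conclusion with less topology. The final passage from the strict inclusions to the three equalities via the partition is routine and matches the paper's closing step.
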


\begin{proof}
	We first observe that we clearly have 
$\relint{K_\pm}A = K_\pm \neq \emptyset$ and 
$K_- \cap \relint {K_+}F \subset K_-\cap K_+ = \emptyset$. 
Consequently, Lemma \ref{weak-separation-in-A}
provides a $y'\in F'$ and an $s\in \R$ that
satisfy the inclusions listed in Lemma \ref{weak-separation-in-A}.

Our first goal is to show $K_0 = \{y'= s\} \cap A$. 
To prove  $K_0 \subset  \{y'= s\} \cap A$, we fix a $y\in K_0$.  
Since $K_- \cap (y + \e B_F)\neq \emptyset$ for all $\e>0$, we then find a sequence $(y_n)\subset K_-$
such that $y_n\to y$. By Lemma \ref{weak-separation-in-A} we then
obtain
\begin{displaymath}
 \langle y', y\rangle = \lim_{n\to \infty} \langle y',y_n\rangle \leq s\, ,
\end{displaymath}
i.e.~$y\in \{y'\leq s\} \cap A$. 
Using $K_+ \cap (y + \e B_F)\neq \emptyset$ for all $\e>0$,
we can analogously show $y\in \{y'\geq s\} \cap A$,
and hence we obtain $y\in \{y'= s\} \cap A$. 

To show the inclusion $\{y'= s\} \cap A \subset K_0$, we assume without loss of generality that $s\geq 0$.
Let us now fix a $y\in A\setminus K_0$, so that our goal becomes to show $y\not\in \{y'= s\} \cap A$.
Now, if $y\in K_+$, we obtain $\langle y', y\rangle > s$, since we have seen in Lemma 
\ref{weak-separation-in-A} that $s\geq 0$ implies 
$K_+ = \relint{K_+}A \subset \{y'>s\}$. Therefore, it remains to consider the case $y\in K_-$.
Let us fix a $y_1\in K_+$. Then we have just seen that $\langle y',y_1\rangle >s$.
For $\lb\in [0,1]$ we now define $y_\lb := \lb y_1 + (1-\lb) y$.
Now, if there is a $\lb\in (0,1)$ with $\langle y', y_\lb\rangle = s$, we obtain 
\begin{align*}
 s = \bigl\langle y',  \lb y_1 + (1-\lb) y\bigr\rangle 
= \lb \langle y', y_1\rangle + (1-\lb) \langle y', y\rangle 
 > \lb s + (1-\lb) \langle y', y\rangle \, ,
\end{align*}
that is $\langle y', y\rangle < s$. Consequently, it remains to show the existence of such a $\lb\in (0,1)$.
Let us assume the converse, that is $y_\lb\in K_-\cup K_+$ for all $\lb\in (0,1)$
by the already established $K_0 \subset  \{y'= s\} \cap A$.
Since $y_0 = y \in K_-$ and $y_1 \in K_+$, we then
have
\begin{equation}\label{strong-separation-in-A-h1}
y_\lb\in K_-\cup K_+  
\end{equation}
for \emph{all} $\lb\in [0,1]$. Let us now consider the map $\psi:[0,1]\to A$ defined by $\psi(\lb) :=y_\lb$.
Clearly, $\psi$ is continuous, and since $K_\pm = \relint{K_\pm}A$, the 
pre-images $\psi^{-1}(K_-)$ and $\psi^{-1}(K_+)$ are open, and, of course,
disjoint. Moreover,  by $\psi(0) = y_0= y\in K_-$
and $\psi(1) = y_1 \in K_+$, they are also non-empty, and \eqref{strong-separation-in-A-h1}
ensures $\psi^{-1}(K_-) \cup \psi^{-1}(K_+) = [0,1]$. Consequently, we have 
found a partition of $[0,1]$ consisting of two open, non-empty sets, i.e.~$[0,1]$ is not connected.
Since this is obviously false, we found a contradiction finishing the proof of 
$\{y'= s\} \cap A \subset K_0$.

To prove the remaining two equalities, let us again assume without loss of generality that $s\geq 0$.
By Lemma \ref{weak-separation-in-A}, we then know 
$K_+ = \relint{K_+}A \subset \{y'>s\}\cap A$. Conversely, for $y\in \{y'>s\}\cap A$ we have 
already shown $y\not \in K_0$, and by the inclusion $K_-\subset \{y'\leq s\}$ established in 
Lemma \ref{weak-separation-in-A} we also know $y\not \in K_-$. Since 
$A= K_-\cup K_0\cup K_+$, we conclude that $y\in K_+$. Consequently, we have also shown 
$K_+ = \{y'>s\} \cap A$, and the remaining $K_- = \{y'<s\} \cap A$ now immediately follows.
\end{proof}

The next result, whose assertion 
mimics the second part of Step 2 as well as  Step 3 of the proof of Theorem 5 in an earlier version of \cite{LambertXXa},
 shows the existence of the separating families considered in Theorem \ref{unique-separation-char-1} and 
 Theorem \ref{unique-separation-char-2}.
The construction idea \eqref{def-zp}
of $z'$ and the 
proof of its $\enorm\cdot$-continuity is an abstraction
from Lambert's proof. However, the remaining parts of our proof
 heavily rely on the preceding results of this section and are therefore independent of \cite{LambertXXa}.

\begin{theorem}\label{level-set-identification-in-H}
Let \textbf{B1}, \textbf{B2*},  \textbf{G1}, and \textbf{G2} be satisfied. Then, 
 for all $r\in I$, there exists a $z'\in H'$ such that 
\begin{align*}
 \{\G<r\} &= \{z'< 0\} \cap B \\
\{\G=r\} &= \{z'= 0\} \cap B\\ 
\{\G>r\} &= \{z'>0\} \cap B\, .
\end{align*}
If, in addition, \textbf{B3} and \textbf{G1*} are satisfied, then $z'$ is actually continuous with respect to $\enorm\cdot$.
\end{theorem}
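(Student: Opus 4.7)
The strategy is to apply Proposition~\ref{strong-separation-in-A} to the translated level sets inside $A$ and then lift the resulting separator from $F$ to $H$ using the direct-sum decomposition $H = F \oplus \R\xstar$ from Lemma~\ref{F-is-in-kernel}. Fix $r \in I$ and set $K_\pm := -\xstar + \{\G \lessgtr r\}$ and $K_0 := -\xstar + \{\G = r\}$ as subsets of $A$. The hypotheses of Proposition~\ref{strong-separation-in-A} reduce to a routine check: non-emptiness of $K_0$ from $r \in \im \G$ and of $K_\pm$ from Lemma~\ref{convex-level-sets}; mutual disjointness and $A = K_- \cup K_0 \cup K_+$ since the originals partition $B$; convexity from G1 combined with Lemma~\ref{quasi-m-char}; the identity $\relint{K_\pm}A = K_\pm$ from the $\fnorm\cdot$-openness of $\{\G \lessgtr r\}$ in $B$ established in Lemma~\ref{convex-level-sets}; and the local-neighborhood condition is a direct translation of G2 using $\hnorm\cdot = \fnorm\cdot$ on $F$ (Lemma~\ref{F-is-in-kernel}). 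Proposition~\ref{strong-separation-in-A} then yields $y' \in F'$ and $s \in \R$ separating the three sets.

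Next, I define $z' : H \to \R$ by $z'(y + \a\xstar) := \langle y', y\rangle - \a s$ for $y \in F$ and $\a \in \R$, which is well-defined by the direct sum. Linearity is immediate, and $z' \in H'$ follows from the estimate $|z'(y + \a\xstar)| \leq \snorm{y'}_{F'}\fnorm y + |s||\a|$ combined with the identity $\hnorm{y + \a\xstar} = \fnorm y + |\a|\enorm{\xstar}$. For $x \in B$, the decomposition $x = (-\xstar + x) + \xstar$ places $-\xstar + x$ in $A$ and gives $z'(x) = \langle y', -\xstar + x\rangle - s$, so the three required equalities follow from the $(y', s)$-separation of $K_-, K_0, K_+$.

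For the second claim, under B3 and G1*, Lemma~\ref{continuity-E-test} reduces the task to showing $z'(z_n) \to 0$ whenever $(z_n) \subset \cone B$ satisfies $\enorm{z_n} \to 0$. Writing $z_n = \b_n x_n$ with $\b_n = \langle \p', z_n\rangle \geq 0$ and $x_n \in B$, the bound $\enorm{x_n} \geq \snorm{\p'}^{-1}$ coming from B1 forces $\b_n \to 0$; setting $u_n := \b_n(-\xstar + x_n) = z_n - \b_n\xstar \in F$ yields $\enorm{u_n} \to 0$ and $z'(z_n) = \langle y', u_n\rangle - \b_n s$, so the problem reduces to upgrading $y'$ on $F$ from $\fnorm\cdot$-continuity to $\enorm\cdot$-continuity.

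The main obstacle here is genuine, because $A$ typically has empty $\enorm\cdot$-interior in $F$ (compare the density example with $\fnorm\cdot = \inorm\cdot$ and $\enorm\cdot$ an $L^p$-norm), so a direct re-application of Hahn--Banach in $(F, \enorm\cdot)$ is unavailable. The extra input must be G1*: Lemma~\ref{all-interirors} applied at some $r \pm \e \in I$ furnishes $x^\pm \in \{\G = r \pm \e\} \subset \{\G \lessgtr r\}$ with $y^\pm := -\xstar + x^\pm \in \relint AF$, and the $\enorm\cdot$-openness of $\{\G \lessgtr r\}$ in $B$ yields $\delta > 0$ with $B \cap (x^\pm + \delta B_E) \subset \{\G \lessgtr r\}$, translating to one-sided bounds $\langle y', v\rangle \gtrless \mp c^\pm$ on $(A - y^\pm) \cap \delta B_E$ for suitable $c^\pm > 0$. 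A rescaling argument that uses the cone decomposition B3 to carry $u_n$ into such neighborhoods should produce the two-sided $\enorm\cdot$-bound on $y'$ needed for $y'(u_n) \to 0$; making this scaling quantitatively tight is where I expect the bulk of the remaining work.
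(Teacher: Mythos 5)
The first half of your argument (existence of $z'\in H'$) is correct and matches the paper's own proof: you translate the level sets to $K_-,K_0,K_+\subset A$, verify the hypotheses of Proposition~\ref{strong-separation-in-A}, and lift $y'\in F'$ together with $s$ to $H'$ via the direct sum $H=F\oplus\R\xstar$. Your formula $z'(y+\a\xstar)=\langle y',y\rangle-\a s$ agrees with the paper's \eqref{def-zp} after simplifying with $F\subset\ker\p'$ and $\langle\p',\xstar\rangle=1$, and the verification of the three identities and of $z'\in H'$ is exactly as in the paper.

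The second half, however, has a genuine gap which you yourself flag. You correctly reduce, via Lemma~\ref{continuity-E-test}, to controlling $\langle y',u_n\rangle$ along sequences $u_n=z_n-\b_n\xstar\in\cone A$ with $\enorm{u_n}\to0$, and you correctly diagnose why a naive $\enorm\cdot$-Hahn--Banach argument fails ($A$ typically has empty $\enorm\cdot$-interior in $F$). But the remainder of your plan --- choosing $x^\pm$ via Lemma~\ref{all-interirors}, producing one-sided bounds on $(A-y^\pm)\cap\d B_E$, and then ``rescaling'' to get two-sided control --- is only a sketch, and you close with ``making this scaling quantitatively tight is where I expect the bulk of the remaining work.'' That is the part of the proof that was actually hard, and it is not done. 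Your plan can be salvaged (a maximal rescaling $\lambda_n:=\min\{\d/\enorm{v_n},1/\b_n\}$ with $v_n:=u_n-\b_n y^\pm$, noting that $y^\pm+\lambda_n v_n=(1-\lambda_n\b_n)y^\pm+\lambda_n\b_n(-\xstar+x_n)$ is a convex combination in $A$), but this needs to be written out, and the invocations of $\relint AF$ and of B3 beyond Lemma~\ref{continuity-E-test} turn out to be unnecessary once this is done carefully.

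The paper's route is shorter and avoids all of this: argue by contradiction. If $z'$ were not $\enorm\cdot$-continuous, Lemma~\ref{continuity-E-test} gives $(z_n)\subset\cone B$ with $\enorm{z_n}\to0$ and, after rescaling and passing to a subsequence, $\langle z',z_n\rangle<-1$ for all $n$. Fix $x_0\in\{\G>r\}$, set $\a:=\langle z',x_0\rangle>0$, write $z_n=\a_n x_n$ with $\a_n\to0$, and put $\b_n:=1/(1+\a\a_n)$. Then $\b_n(x_0+\a z_n)=\b_n x_0+(1-\b_n)x_n$ is a genuine convex combination of $x_0$ and $x_n$, hence lies in $B$, and it converges to $x_0$ in $\enorm\cdot$; by \textbf{G1*} it eventually lies in $\{\G>r\}=\{z'>0\}$, which contradicts $\langle z',\b_n(x_0+\a z_n)\rangle=\b_n\a(1+\langle z',z_n\rangle)<0$. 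The only inputs are convexity of $B$ and \textbf{G1*}; no $\relint AF$ machinery is required. You should compare your planned construction with this to see how much shorter the closed-form choice of $\b_n$ makes the argument.
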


\begin{proof}
 For some fixed $r\in I$ we  consider the sets 
\begin{align*}
 K_-&:= -\xstar +  \{\G<r\} \\
K_0&:= -\xstar +  \{\G=r\}\\
K_+&:= -\xstar +  \{\G>r\}\, .
\end{align*}
Our first goal is to show that these sets satisfy the assumptions of Proposition \ref{strong-separation-in-A}.
To this end, we first observe that $\{\G<r\}\subset B$ immediately implies $K_- \subset -\xstar +B =A$, and 
the same argument can be applied to $K_0$ and $K_+$. Moreover, they are mutually disjoint since the defining
level sets are mutually disjoint, and since $r\in \mathring \G(B)$ they are also non-empty. The equality $A= K_-\cup K_0\cup K_+$
follows from $B=  \{\G<r\} \cup  \{\G=r\}\cup  \{\G>r\}$, and the convexity of $K_-$ and
$K_+$ is a consequence of the convexity of $\{\G<r\}$ and $\{\G>r\}$ established in Lemma \ref{convex-level-sets}.    
The convexity of $K_0$ follows from \textbf{G1}.
%
Moreover, by Lemma \ref{convex-level-sets}, the set $\{\G<r\}$ is open in $B$ with respect to $d_F$,
and since the metric spaces $(B,d_F)$ and $(A, \fnorm\cdot)$ are isometrically isomorphic via translation with $-\xstar$,
we see that $K_-$ is open in $A$ with respect to $\fnorm\cdot$. This shows $\relint{K_-}A= K_-$, and 
$\relint{K_+}A= K_+$ can be shown analogously. Finally, observe that for $x\in \{\G=r\}$, $\e>0$, 
 and $y:= -\xstar + x$ we have
\begin{align*}
 K_-\cap (y+\e B_F) 
&= \bigl(-\xstar +  \{\G<r\}\bigr) \cap \bigl(-\xstar + x + \e B_F\bigr)\\
&= \bigl(-\xstar +  \{\G<r\}\bigr) \cap \bigl(-\xstar + x + \e B_H\bigr)\\
& = -\xstar + \bigl( \{\G<r\} \cap (x + \e B_H) \bigr) \\
& \neq \emptyset\, ,
\end{align*}
where 
in the second step we used the fact $\fnorm \cdot = \hnorm\cdot$ on $A\subset F$, see Lemma \ref{F-is-in-kernel},
and 
the last step relies on   \textbf{G2}.
Obviously, $ K_+\cap (y+\e B_F) \neq \emptyset$ can be shown analogously, and hence, the assumptions of 
Proposition \ref{strong-separation-in-A} are indeed satisfied.  

Now, let $y'\in F'$ and $s\in \R$  be according to 
Proposition \ref{strong-separation-in-A}. 
Moreover, let $\hat y'\in H'$ be the extension of $y'$ to $H$ that is defined by
\begin{displaymath}
 \langle \hat y', y+\a\xstar\rangle := \langle y',y\rangle
\end{displaymath}
for all $y+\a\xstar \in H=F\oplus\R\xstar$.
Clearly, $\hat y'$ is indeed an extension of $y'$ to $H$ and the continuity of $\hat y'$ on $H$
follows from 
\begin{displaymath}
|\langle \hat y', y+\a\xstar\rangle| 
= |\langle y',y\rangle| 
\leq \snorm{y'}\cdot\fnorm y 
\leq \snorm{y'}\cdot \hnorm{y+\a\xstar}\, .
\end{displaymath}
With these preparations, we now define a $z'\in H'$ by
\begin{align}\label{def-zp}
 \langle z', z\rangle := -s \langle \p',z\rangle + \bigl\langle \hat y', z-\langle \p',z\rangle \xstar\bigr\rangle\, , \qquad \qquad z\in H.
\end{align}
Obviously, $z'$ is  linear. Moreover, the restriction $\p'_{|H}$ of $\p'$ to $H$ is continuous
with respect to $\hnorm\cdot$, since Lemma \ref{F-is-in-kernel}
ensured $\enorm\cdot \leq \hnorm\cdot$ on $H$, and consequently we obtain  $z'\in H'$. 

Let us show that $z'$ is the desired functional. To this end, we first observe that
the inclusion $F\subset \ker \p'$ established in Lemma \ref{F-is-in-kernel} together with $\xstar \in B\subset \{\p'=1\}$
yields $\xstar + F\subset \{\p' = 1\}$.
For $x \in \xstar + F\subset H$ this gives
\begin{align*}
 \langle z', x\rangle 
 = -s\langle \p',x\rangle   + \bigl\langle \hat y', x-\langle \p',x\rangle \xstar\bigr\rangle 
 = -s + \bigl\langle \hat y', x-  \xstar\bigr\rangle 
 = -s + \bigl\langle y', x-  \xstar\bigr\rangle \, .
\end{align*}
Moreover, recall that we have $x\in B$ if and only if $-\xstar +x\in A$, and hence we obtain
\begin{align*}
 \{z'= 0\} \cap B 
& = \{x\in B: \langle y', x-\xstar\rangle = s \} \\
& = \bigl\{x\in B: -\xstar+ x \in \{y'=s\} \bigr\} \\
& = \xstar + \bigl\{y\in A:  y \in \{y'=s\} \bigr\} \\
& = \xstar + \bigl( \{y'=s\} \cap A\bigr)\\
& = \xstar + K_0\\
& = \{\G=r\}\, .
\end{align*}
The remaining equalities $\{\G<r\} = \{z'< 0\} \cap B$ and  $\{\G>r\} = \{z'> 0\} \cap B$ can be shown analogously.

Let us finally show that the  functional $z'$ found so far is actually continuous with respect to $\enorm\cdot$,
if \textbf{B3} and \textbf{G1*} are satisfied.
Let us assume the converse. By Lemma \ref{continuity-E-test}, there then exists a sequence $(z_n)\subset \cone B$
with $\enorm{z_n}\to 0$ and $\langle z', z_n\rangle \not\to 0$. Picking a suitable subsequence and scaling it appropriately,
we may assume without loss of generality that either $\langle z', z_n\rangle < -1$ for all $n\geq 1$,
or $\langle z', z_n\rangle > 1$ for all $n\geq 1$.
Let us consider the first case, only, the second case can be treated analogously.
We begin by picking an $x_0\in \{\G>r\} = \{z'>0\} \cap B$. This yields $\a:= \langle z',x_0\rangle >0$. 
Moreover, since $(z_n)\subset \cone B$ and $z_n\neq 0$ by the assumed 
$\langle z', z_n\rangle < -1$, we find sequences $(\a_n)\subset (0,\infty)$ and $(x_n)\subset B$ such that 
$z_n = \a_nx_n$ for all $n\geq 1$. 
Our first goal is to show that $\a_n\to 0$. To this end, we observe that $x_n\in B\subset \{\p'=1\}$ implies
$1=|\langle \p',x_n\rangle | \leq \snorm{\p'}\cdot \enorm{x_n}$, and hence we obtain
\begin{displaymath}
 |\a_n| \leq |\a_n| \cdot \snorm{\p'}\cdot \enorm{x_n} = \snorm{\p'}\cdot \enorm{z_n} \to 0\, .
\end{displaymath}
For $n\geq 1$, we define $\b_n := \frac 1 {1+\a\a_n}$. Our considerations made so far then yield
both $\b_n\to 1$ and 
 $\b_n\in (0,1)$
for all $n\geq 1$. By the definition of $\a$ and the assumptions made on $(z_n)$, this yields
\begin{equation}\label{level-set-identification-in-H-h1}
 \langle z', \b_n(x_0 + \a z_n)\rangle  
=  \b_n \bigl( \a + \a\langle z', z_n\rangle \bigr)
<  0
\end{equation}
for all $n\geq 1$. On the other hand,  $x_0\in \{\G>r\}$ ensures $\frac {\G(x_0)-r}2>0$, and 
since \textbf{G1*} assumes that $\G$ is 
$\enorm\cdot$-continuous,
there thus exists a $\d>0$ such that, for all 
$x\in B$ with $\enorm{x-x_0} \leq \d$, we have 
\begin{displaymath}
 \bigl|\G(x) - \G(x_0)\bigr| \leq \frac {\G(x_0)-r}2\, .
\end{displaymath}
For such $x$, a simple transformation then yields $\G(x) \geq \frac{\G(x_0)+r}2 > r$, and thus we find
\begin{displaymath}
 \bigl\{ x\in B: \enorm{x-x_0} \leq \d\bigr\} \subset \{\G>r\} = \{z'>0\} \cap B\, .
\end{displaymath}
To find a contradiction to \eqref{level-set-identification-in-H-h1}, it thus suffices to show that 
\begin{equation}\label{level-set-identification-in-H-h2}
\b_n(x_0 + \a z_n) \in \{ x\in B: \enorm{x-x_0} \leq \d\}
\end{equation} for all sufficiently large $n$. To prove this, we first observe that 
\begin{displaymath}
 \b_n(x_0 + \a z_n) = \b_n x_0 + \frac{\a\a_n}{1+\a\a_n} x_n = \b_n x_0 + (1-\b_n)x_n\, ,
\end{displaymath}
and since $\b_n\in (0,1)$, the convexity of $B$ yields $\b_n(x_0 + \a z_n)\in B$.
Finally, we have 
\begin{displaymath}
 \enorm{x_0 - \b_n(x_0 + \a z_n)} \leq (1-\b_n)\enorm{x_0} + \a\b_n \enorm{z_n} \to 0
\end{displaymath}
since $\b_n\to 1$ and $\enorm{z_n}\to 0$. Consequently, \eqref{level-set-identification-in-H-h2}
is indeed satisfied for all sufficiently large $n$, which finishes the proof.
\end{proof}

Theorem \ref{level-set-identification-in-H} has shown the existence of a functional 
separating the level sets of $\G$. Our next and final  goal is to show that this functional
is unique modulo normalization.
To this end, we need the following lemma, which shows that the null space of a separating functional
is completely determined by the set  $\{\G=r\}$.

Note that the assertion of the
Lemmas \ref{kernel-is-determined} and  \ref{equal-modulo-orientation}
are  inspired by Step 3 of the proof of Theorem 5 of \cite{LambertXXa}, but again our proofs are   more
complicated, since we cannot guarantee $\xstar\in \{\G=r\}$.

\begin{lemma}\label{kernel-is-determined}
Let \textbf{B1}, \textbf{B2*},  and \textbf{G1} be satisfied. Moreover,   
 let $r\in I$ and $z:H\to \R$ be a linear functional satisfying $\{\G=r\} = B \cap \ker z'$. Then we have  $z'\neq 0$ and
 \begin{displaymath}
    \ker z' = \spann (\ker z'\cap B) = \spann \{\G=r\}\, .
 \end{displaymath}
%
%
\end{lemma}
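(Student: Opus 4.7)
The plan is to establish the three assertions in turn, leveraging the direct sum decomposition $H = F \oplus \R\xstar$ from Lemma \ref{F-is-in-kernel} together with the existence of an interior level-set point supplied by Lemma \ref{all-interirors}. First, non-triviality of $z'$: since $r \in I$, Lemma \ref{convex-level-sets} guarantees that $\{\G<r\}$ is non-empty, and any $x_-$ in this set lies in $B \setminus \{\G=r\} = B \setminus (B \cap \ker z')$, forcing $z'(x_-) \neq 0$. Moreover, the identity $\spann(\ker z' \cap B) = \spann\{\G=r\}$ is immediate from the hypothesis, and the inclusion $\spann\{\G=r\} \subset \ker z'$ is trivial because $\{\G=r\} \subset \ker z'$ and $\ker z'$ is a subspace.

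The crux is the reverse inclusion $\ker z' \subset \spann\{\G=r\}$. I would first apply Lemma \ref{all-interirors} to obtain some $x_0 \in \{\G=r\}$ with $y_0 := -\xstar + x_0 \in \relint{A}{F}$; unwinding the definition of the relative interior yields an $\e>0$ such that $y_0 + \e B_F \subset A$, equivalently $x_0 + \e B_F \subset B$. Next, given an arbitrary $z \in \ker z'$, I would use $H = F \oplus \R\xstar$ to write $z = y + \a \xstar$ uniquely with $y \in F$, $\a \in \R$. Substituting $\xstar = x_0 - y_0$ gives $z - \a x_0 = y - \a y_0 \in F$, and since $x_0 \in \{\G=r\} \subset \ker z'$, the element $z - \a x_0$ still belongs to $\ker z'$. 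This reduces the inclusion to showing $F \cap \ker z' \subset \spann\{\G=r\}$, because then $z = (z - \a x_0) + \a x_0 \in \spann\{\G=r\}$.

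For that reduced step, fix $y' \in F \cap \ker z'$; the case $y'=0$ is trivial, so assume $y' \neq 0$ and set $t := \e/(2\fnorm{y'})$. Then $\fnorm{\pm t y'} \leq \e$ ensures $x_0 \pm t y' \in B$, while linearity of $z'$ combined with $z'(x_0)=0=z'(y')$ gives $z'(x_0 \pm ty') = 0$, so $x_0 \pm ty' \in B \cap \ker z' = \{\G=r\}$. The identity
\[
y' = \frac{1}{2t}\bigl((x_0 + ty') - (x_0 - ty')\bigr)
\]
then exhibits $y'$ as an element of $\spann\{\G=r\}$. The main obstacle I anticipate is precisely this reduction from $H$ to $F$: unlike the setting underlying Lambert's argument, we cannot assume $\xstar \in \{\G=r\}$, so everything hinges on picking the auxiliary $x_0$ supplied by Lemma \ref{all-interirors} and using $\xstar = x_0 - y_0$ to transfer a statement about $\ker z' \subset H$ into one about $F \cap \ker z'$, where the relative interior property provides enough room to perturb $x_0$ in arbitrary $F$-directions while staying inside $B$.
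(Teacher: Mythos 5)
Your proof is correct and takes essentially the same route as the paper's: both hinge on Lemma \ref{all-interirors} to produce an $x_0\in\{\G=r\}$ with $-\xstar + x_0\in\relint AF$, and then use the resulting room in the relative interior to perturb $x_0$ in an arbitrary $F$-direction inside $\ker z'$ while staying in $B$. The only cosmetic differences are that the paper invokes Lemma \ref{generating-H} to decompose $H = F\oplus\R x_0$ directly (whereas you decompose via $F\oplus\R\xstar$ and then substitute $\xstar = x_0 - y_0$, arriving at the same split), and the paper uses the one-sided perturbation $x_0$, $x_0+\hat y$ while you use the symmetric pair $x_0\pm ty'$; both yield $y'\in\spann\{\G=r\}$ by taking a difference.
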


\begin{proof}
The second equality is obvious, and 
 since $\ker z'$ is a subspace, the inclusion $\spann (\ker z'\cap B) \subset \ker z'$ is also  obvious.

To prove the converse inclusion, we fix a $z\in \ker z'$. Moreover, using Lemma \ref{all-interirors}, we fix 
an $x_0\in \{\G=r\} = B \cap \ker z'$ satisfying $-\xstar + x_0 \in \relint AF$.
By $z\in \ker z'\subset H$ and Lemma  \ref{generating-H}, which showed $H= F\oplus \R x_0$, there then  
exist a $y\in F$ and an $\a\in \R$ such that $z = y + \a x_0$. 
Obviously, it suffices to show both $\a x_0 \in \spann (\ker z'\cap B)$ and 
$y \in \spann (\ker z'\cap B)$.
Now,  $\a x_0 \in \spann (\ker z'\cap B)$ immediately follows from $x_0\in  \ker z'\cap B$,
and for $y=0$ the second inclusion is trivial. 
Therefore, let us assume that $y\neq 0$. 
Since  $-\xstar + x_0 \in \relint AF$, there then exists an $\e>0$ such that for all
$ \breve y \in F$ with $\fnorm{-\xstar + x_0 - \breve y}\leq \e$ we have $\breve   y\in A$.
Writing  $\hat y := \frac{\e}{\fnorm y} y$, we have $\hat y\in F$ by the assumed $y\in F$,  
and thus also $\tilde y:= -\xstar + x_0 + \hat y \in F$.
Moreover, our construction immediately yields $\fnorm{-\xstar + x_0 - \tilde y} = \e$, and hence we actually have 
$\tilde y\in A = -\xstar + B$. Consequently, we have found $x_0 + \hat y = \tilde y + \xstar\in B$.
On the other hand,  the assumed 
$x_0\in \ker z'$ implies $\a x_0\in \ker z'$, and  thus we find $y\in \ker z'$ by $z\in \ker z'$ and $z = y + \a x_0$. 
Using both $x_0,y\in \ker z'$, we thus 
obtain $x_0 + \hat y\in \ker z'$, which together with the already established $x_0 + \hat y \in B$
 shows $x_0+\hat y\in \spann  (\ker z'\cap B)$. Since   $x_0\in B \cap \ker z'$ by assumption
we therefore finally find the desired $y \in \spann  (\ker z'\cap B)$ by the definition of $\hat y$.

Finally, assume that $z' = 0$. By Lemma \ref{quasi-monotone-level-sets} in combination with 
\textbf{G1} and Lemma \ref{quasi-m-char} 
we find  an $x\in \{\Gamma <r\}$, and  the assumed $z'=0$
implies $x\in \ker z'$, while $\{\Gamma <r\}\subset B$ implies $x\in B$. This yields 
$x\in B \cap \ker z' = \{\G=r\}$, which contradicts  $x\in \{\Gamma <r\}$.
\end{proof}

The following lemma shows that, modulo orientation, two normalized separating functionals are equal.

\begin{lemma}\label{equal-modulo-orientation}
Let \textbf{B1},  \textbf{B2*}, and \textbf{G1}
be satisfied. Moreover, 
 let $r\in I$ and $z_1', z_2'\in H'$ such that 
  $\{\G=r\} = B \cap \ker z'_1$ and $\{\G=r\} \subset B \cap \ker z'_2$.
 Then there exists an $\a\in \R$ such that $z_2' = \a z_1'$, and if 
 $\{\G=r\} = B \cap \ker z'_2$, we actually have $\a\neq 0$.
\end{lemma}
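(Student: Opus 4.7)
The plan is to deduce the proportionality from a kernel-inclusion argument, using the preceding lemma as the crucial ingredient. The hypothesis $\{\G=r\} = B \cap \ker z_1'$ lets me apply Lemma \ref{kernel-is-determined} to $z_1'$, which simultaneously yields $z_1'\neq 0$ and the identity $\ker z_1' = \spann\{\G=r\}$. Meanwhile, the weaker hypothesis $\{\G=r\}\subset B\cap\ker z_2'$ says that $z_2'$ vanishes on the set $\{\G=r\}$, and since $\ker z_2'$ is a linear subspace of $H$, this implies $\spann\{\G=r\}\subset \ker z_2'$. Combining these two observations gives the key inclusion $\ker z_1'\subset \ker z_2'$.

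Once this inclusion is established, the conclusion $z_2'=\a z_1'$ is a standard linear-algebra fact with a one-line proof. Since $z_1'\neq 0$, I can choose $x_0\in H$ with $\langle z_1',x_0\rangle = 1$ and set $\a := \langle z_2',x_0\rangle$. For an arbitrary $z\in H$ the element $z - \langle z_1',z\rangle x_0$ lies in $\ker z_1'$, hence in $\ker z_2'$ by the inclusion, so
\begin{align*}
   \langle z_2',z\rangle = \langle z_1',z\rangle \langle z_2',x_0\rangle = \a \langle z_1',z\rangle\, ,
\end{align*}
which is the required identity.

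For the addendum, suppose in addition that $\{\G=r\} = B\cap \ker z_2'$. Then Lemma \ref{kernel-is-determined} applies to $z_2'$ as well and yields $z_2'\neq 0$. Since $z_2' = \a z_1'$ and $z_1'\neq 0$, this forces $\a\neq 0$, completing the proof. No step looks like a serious obstacle here; the only subtlety is to remember that the stronger equality hypothesis is needed precisely to invoke Lemma \ref{kernel-is-determined} (the inclusion alone would not rule out $z_2'=0$), and that the inclusion $\ker z_1'\subset\ker z_2'$ does not require any continuity of the functionals, only the linearity used to pass from $\{\G=r\}$ to its span.
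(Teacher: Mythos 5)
Your proof is correct and takes essentially the same route as the paper: apply Lemma \ref{kernel-is-determined} to $z_1'$ to obtain $z_1'\neq 0$ and $\ker z_1'=\spann\{\G=r\}\subset\ker z_2'$, then conclude proportionality by the standard one-codimension argument, and finally invoke Lemma \ref{kernel-is-determined} again on $z_2'$ for the addendum. The only cosmetic difference is that you normalize $x_0$ so that $\langle z_1',x_0\rangle=1$ while the paper carries the ratio $\langle z_1',z_0\rangle$ along explicitly.
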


\begin{proof}
 Our assumptions guarantee $B \cap \ker z'_1 \subset B \cap \ker z'_2 \subset \ker z_2'$, 
 and thus Lemma \ref{kernel-is-determined} yields
$\ker z_1' \subset \ker z_2'$.  Moreover,  Lemma \ref{kernel-is-determined} shows $z_1'\neq 0$, which in turn 
gives  a $z_0\in H$
with $z_0\not\in   \ker z_1'$. For $z\in H$, an easy calculation then shows that 
\begin{displaymath}
 z - \frac{\langle z_1', z\rangle}{\langle z_1', z_0\rangle}z_0 \in \ker z_1' \subset \ker z_2'\, ,
\end{displaymath}
and hence we conclude that $\langle z_2', z\rangle =  \frac{\langle z_1', z\rangle}{\langle z_1', z_0\rangle}\langle z_2',z_0\rangle$.
In other words, for $\a:= \frac{\langle z_2', z_0\rangle}{\langle z_1', z_0\rangle}$, we have 
$z_2' = \a z_1'$. Finally, $\{\G=r\} = B \cap \ker z'_2$ implies $z_2'\neq 0$ by Lemma \ref{kernel-is-determined}, and 
hence we conclude that $\a\neq 0$.
%
%
\end{proof}

\begin{proof}[Proof of Theorem \ref{unique-separation-char-1}]
  \atob i {iv} The existence has been proven in the first part of  Theorem \ref{level-set-identification-in-H}.
  To show the uniqueness, 
we assume that we have two normalized separating families $(z_r')_{r\in I} \subset H'$
   and $(\tilde z_r')_{r\in I} \subset H'$ for $\G$. Moreover, we fix an $r\in I$.
    Then Lemma \ref{equal-modulo-orientation} gives an $\a\neq 0$ with 
 $z_r' = \a \tilde z_r'$.  The imposed normalization $\snorm{z_r'}_{H'} = 1 = \snorm{\tilde z_r'}_{H'}$
 implies $|\a|=1$, and the orientation of $z_r'$ and $\tilde z_r'$ on $\{\G<r\}$ excludes the case $\a=-1$.
 Thus we have $z_r' =\tilde z_r'$.
   
   \atob {iv} {iii} Trivial.
   
   \atob {iii} {ii} By \eqref{unique-separation-a1} and \eqref{def-df} 
   we know that $\{\G<r\}$ is relatively open in $B$ with respect to $d_F$ for all $r\in I$.
   Moreover, for $r<\inf I$ we have $\{\G<r\}=\emptyset$ and for $r>\sup I$ we have $\{\G<r\}=B$.
   Finally, if $r:= \sup I< \infty$, then 
   \begin{displaymath}
   \{\G<r\} = \bigcup_{n\geq 1}  \{\G<r - 1/n\} \, ,
   \end{displaymath}
  and therefore $\{\G<r\}$ is relatively open in $B$ with respect to $d_F$ for \emph{all} $r\in\R$.
   Consequently, $\G$ is upper semi-continuous with respect to $d_F$, and analogously we can show that 
   $\G$ is lower semi-continuous with respect to $d_F$. Together, this gives the $\fnorm\cdot$-continuity of $\G$.
   Moreover, \eqref{unique-separation-a2} together with the convexity of $B$ shows that 
   $\{\G=r\}$ is convex for all $r\in I$, and by Lemma \ref{quasi-m-char} we conclude that $\G$ is quasi-monotone.
   To verify that $\G_{|B_0}$ is  strictly quasi-monotone, we fix $x_0,x_1\in B_0$ and write 
   $x_t := (1-t)x_0 + tx_1$ for $t\in [0,1]$. Furthermore, we define $r_0:= \G(x_0)$ and $r_1:= \G(x_1)$ and assume
   without loss of generality that $r_0\leq r_1$. To check that $\G_{|B_0}$ is quasi-monotone
   we first observe that  the already established 
    quasi-monotonicity of $\G$ yields $r_0 \leq \G(x_t) \leq r_1$ for all $r\in [0,1]$.
    Moreover, we have  $r_0,r_1\in I$, and 
    since $I$ is an  interval by Lemma \ref{quasi-monotone-level-sets}, we thus find $\G(x_t) \in I$. In other words, we have shown
    that $x_t\in B_0$ for all $t\in [0,1]$, and since the latter gives $\G_{|B_0}(x_t) = \G(x_t)$, we obtain the 
    quasi-monotonicity of $\G_{|B_0}$. Let us finally show that $\G_{|B_0}$ is strictly quasi-monotone.
    To this end, we keep our notation and additionally assume that $r_0<r_1$.
    Then, an easy calculation using \eqref{unique-separation-a2}, \eqref{unique-separation-a3}, and
    $x_1\in \{\G=r_1\}  \subset\{\G>r_0\}$ shows 
    \begin{displaymath}
       \langle z_{r_0}',x_t\rangle 
       = (1-t) \langle z_{r_0}',x_0\rangle  + t\langle z_{r_0}',x_1\rangle 
       =t \langle z_{r_0}',x_1\rangle 
       > 0
    \end{displaymath}
      for $t\in (0,1)$ and thus $x_t\in \{z_{r_0}'>0\} \cap B = \{\G>r_0\}$, that is $\G_{|B_0}(x_t) > r_0$.
      By considering $z_{r_1}'$ instead, we analogously obtain $\G_{|B_0}(x_t) < r_1$, and hence $\G_{|B_0}$ is indeed
      strictly quasi-monotone.

   \atob {ii} i Assumption \textbf{G1} follows from Lemma \ref{quasi-m-char}. To show that \textbf{G2} is also satisfied, we 
   fix an $r\in I$ and an $x\in \{\G=r\}$. By Lemma \ref{quasi-monotone-level-sets}
   there then exist an $s\in I$ with $s>r$ and an $x_1^+\in \{r<\G<s\}$. 
     For $t\in (0,1)$ we define $x_t^+ := (1-t) x + t x_1^+$. Then our construction ensures $x,x_1^+\in B_0$ and hence 
     the strict quasi-concavity of $\G_{|B_0}$ gives $\G(x_t^+) = \G_{|B_0} (x_t^+) > \min \{\G(x), \G(x_1^+)   \} = r$,
     that is $x_t^+\in \{\G>r\}$. Analogously we find $x_t^-\in \{\G<r\}$, and by choosing sufficiently small $t$ we can verify 
     \textbf{G2}.
\end{proof}

\begin{proof}[Proof of Theorem \ref{unique-separation-char-2}]
   \atob i {iv} The existence follows from the second part of  Theorem \ref{level-set-identification-in-H}.
    Since every $z'\in (H, \enorm\cdot)'$ is also an element of $H'$, the uniqueness 
   can be shown as in the proof of Theorem \ref{unique-separation-char-1}.
   
   \atob {iv} {iii} Trivial.
   
   
   \atob {iii} {ii} The $\enorm\cdot$-continuity can be  
   shown 
   as in the proof of Theorem \ref{unique-separation-char-1} if $d_F$ and $\fnorm\cdot$ are
   replaced by $d_E$ and $\enorm\cdot$, respectively. The remaining parts follow from Theorem \ref{unique-separation-char-1}
   and the already mentioned inclusion $ (H, \enorm\cdot)'\subset H'$.
   
   \atob {ii} i  This again follows from Theorem \ref{unique-separation-char-1}.
   
   Finally,  if \textbf{B4} is also satisfied, i.e.~if  $H$ is dense in $E$, 
then the existence of the unique
extension follows from  e.g.~\cite[Theorem 1.9.1]{Megginson98}. Moreover, this theorem also shows that 
$\snorm{\hat z_r'}_{E'} = \snorm{z_r'}_{E'}=1$. 
\end{proof}

%

%

Before we can prove Theorem \ref{unique-extended-separation} we need to establish a simple auxiliary result.

\begin{lemma}\label{closure-of-levels-lemma-2}
 Assume that \textbf{G1*} and \textbf{G3} are satisfied. Then for all  $r\in I$ we have 
  \begin{displaymath}
  \{\hat \G \geq r\} = \overline {\{\G \geq r  \}}^E\, .
 \end{displaymath}
\end{lemma}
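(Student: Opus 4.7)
The plan is to prove the two inclusions separately, relying heavily on Lemma \ref{closure-of-levels-lemma-1}. The direction $\supset$ is immediate: it is exactly inclusion \eqref{closure-of-levels-lemma-1-h2} of Lemma \ref{closure-of-levels-lemma-1}, and it does not require \textbf{G3} at all.

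For the reverse inclusion, I would fix $r \in I$ and $x \in \{\hat \G \geq r\}$, and split into two cases according to whether $\hat \G(x) > r$ or $\hat \G(x) = r$. If $\hat \G(x) > r$, then \eqref{closure-of-levels-lemma-1-h1} of Lemma \ref{closure-of-levels-lemma-1} gives $x \in \overline{\{\G > r\}}^E \subset \overline{\{\G \geq r\}}^E$, and there is nothing more to do.

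The interesting case is $\hat \G(x) = r$. Here I would use \textbf{G3} to pass from the boundary to the strict superlevel set. Specifically, for each $n \geq 1$, assumption \textbf{G3} (applicable since $r \in I$ and $x \in \{\hat \G = r\}$) yields a point $x_n^+ \in \{\hat \G > r\}$ with $\enorm{x - x_n^+} \leq 1/(2n)$. Now apply \eqref{closure-of-levels-lemma-1-h1} again to each $x_n^+$: this gives $x_n^+ \in \overline{\{\G > r\}}^E$, so there is a point $y_n \in \{\G > r\} \subset \{\G \geq r\}$ with $\enorm{x_n^+ - y_n} \leq 1/(2n)$. The triangle inequality then produces $\enorm{x - y_n} \leq 1/n$, so $(y_n) \subset \{\G \geq r\}$ converges to $x$ in $\enorm\cdot$, i.e.\ $x \in \overline{\{\G \geq r\}}^E$.

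The only potential wrinkle is checking that \textbf{G3} really does apply to the $x$ in question, but this is automatic: $x \in \{\hat \G = r\}$ with $r \in I = \mathring{\G(B)}$ is precisely the hypothesis of \textbf{G3}. Compared with Lemma \ref{closure-for-strict-mono}, which used strict quasi-monotonicity to manufacture a nearby point of $\{\hat \G > r\}$ by moving along a segment toward some $x^+ \in \{\G > r\}$, the role of \textbf{G3} is simply to provide such a nearby point directly, without requiring the segment argument; no real obstacle arises.
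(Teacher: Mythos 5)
Your proof is correct and follows the same basic route as the paper: \eqref{closure-of-levels-lemma-1-h2} gives $\supset$, and $\subset$ is established by combining \textbf{G3} with \eqref{closure-of-levels-lemma-1-h1}. The one genuine difference is your explicit case split on whether $\hat\G(x) > r$ or $\hat\G(x) = r$, applying \textbf{G3} only in the latter case. The paper instead sets $r^* := \hat\G(x)$ and invokes \textbf{G3} at the level $r^*$, afterwards using $\{\hat\G > r^*\} \subset \{\hat\G > r\}$. Strictly speaking, \textbf{G3} is only stated for levels in $I$, and $r^*$ could in principle equal $\sup I$ (if $\hat\G$ attains that supremum on $\overline B$), in which case \textbf{G3} gives nothing at level $r^*$. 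Your case split sidesteps this edge case: when $\hat\G(x) > r$, inclusion \eqref{closure-of-levels-lemma-1-h1} alone already yields $x \in \overline{\{\G>r\}}^E$, and when $\hat\G(x) = r$ the level $r$ lies in $I$ by hypothesis so \textbf{G3} applies as stated. Your version is thus slightly more careful than the paper's, though both rest on the same idea.
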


\begin{proof}
 {``$\supset$''.} This follows from inclusion 
 \eqref{closure-of-levels-lemma-1-h2} of Lemma \ref{closure-of-levels-lemma-1}.

 {``$\subset$''.} Let us fix an $x\in   {\{\hat \G \geq r  \}}$. We write $r^*:= \G(x)$. By \textbf{G3} there then 
 exist $x_n\in \{\hat \G>r^*\}$ with $\enorm{x-x_n}\leq 1/n$ for all $n\geq 1$. Now, $r^*\geq r$ together with Lemma \ref{closure-of-levels-lemma-1}
 yields
 \begin{displaymath}
  x_n\in \{\hat \G>r^*\} \subset \{\hat \G>r\} \subset \overline {\{\G> r   \}}^E
 \end{displaymath}
 for all $n\geq 1$,
 and thus we find 
  $x\in  \overline {\{\G> r   \}}^E$.
\end{proof}

\begin{proof}[Proof of Theorem \ref{unique-extended-separation}]
 Using Lemma \ref{closure-of-levels-lemma-2} and Theorem \ref{unique-separation-char-2} we find
 \begin{align}\label{unique-extended-separation-p1}
  \{\hat \G \geq r\} 
  = \overline {\{\G \geq r  \}}^E 
  = \overline{ \{ z_r' \geq 0\} \cap B }^E\, .
%
 \end{align}
 Let us define $\Upsilon:B\to \R$ by $\Upsilon(x) :=  z_r'(x)$ for $x\in B$. Then 
 $\hat \Upsilon :=(\hat z_r')_{|\overline B}$ is clearly a continuous and strictly quasi-monotone extension 
 of $\Upsilon$ to $\overline B$. Moreover, Theorem \ref{unique-separation-char-2} in combination with 
 Lemma \ref{quasi-monotone-level-sets}
 shows 
 $\{\Upsilon >0\} = \{\G>r\}\neq \emptyset$
 and $\{\Upsilon <0\} = \{\G<r\}\neq \emptyset$, and using that $\Upsilon(B)$ is an interval
 by Lemma \ref{quasi-monotone-level-sets}
 we conclude that 
 $0\in \mathring {\Upsilon(B)}$. Consequently, Lemma \ref{closure-for-strict-mono}
 yields
  \begin{displaymath}
   \{\hat z_r'\geq 0  \} \cap \overline B 
   =\{\hat \Upsilon\geq 0\}
   = \overline {\{\Upsilon\geq  0   \}}^E
   = \overline{\{z_r'\geq 0\} \cap B   }^E\, .
 \end{displaymath}
 Combining this equality with  \eqref{unique-extended-separation-p1} we then
 find $ \{\hat \G \geq r\}  = \{\hat z_r'\geq 0  \} \cap \overline B$.
 Analogously, we can prove $ \{\hat \G \leq r\}  = \{\hat z_r'\leq 0  \} \cap \overline B$, and combining
 the last two equalities we then easily obtain the assertion.
\end{proof}

\section{Proofs for Section \ref{sec:meas-sep}}\label{sec:proof-2}

To prove Theorem  \ref{strong-measurable-and-int} we again need a couple of 
preliminary results. 
Most of these results consider, in one form or the other, 
 the following function
 $\Psi:I\to [0,\infty)$ defined by
\begin{equation}\label{def-spi}
 \Psi(r) :=   \inf_{z'\in S^+} \sup_{x\in \{\G=r\}} \bigl|\langle z', x\rangle\bigr|\, , \qquad \qquad r\in I,
\end{equation}
where $S^+ := \{ z'\in E':  \snorm{z'_{|H}}_{E'}=1 \mbox{ and }  \langle z',\xstar\rangle \geq 0\}$.

Our first result shows that the functionals found in Theorem \ref{unique-separation-char-2} are essentially the 
only minimizers of the outer infimum in \eqref{def-spi}.

\begin{lemma}\label{unique-minimizer}
Assume that \textbf{B1}, \textbf{B2*}, \textbf{B3}, \textbf{G1*}, and \textbf{G2} are satisfied.
 Then, for all $r\in I$, we have $\Psi(r) = 0$, and there exists a $z'\in S^+$ such that 
\begin{align}\label{psi-again}
 \Psi(r) = \sup_{x\in \{\G=r\}} |\langle z', x\rangle|\, .
\end{align}
Moreover, for every $z'\in S^+$ satisfying \eqref{psi-again}, we have the following implications
\begin{align*}
 \G(\xstar) <r \qquad \qquad & \Rightarrow \qquad \qquad z'_{|H} = -z_r'\\
\G(\xstar) =r \qquad \qquad & \Rightarrow \qquad \qquad z'_{|H} = \pm z_r'\\
\G(\xstar) >r \qquad \qquad & \Rightarrow \qquad \qquad z'_{|H} = z_r'\, ,
\end{align*}
where $(z_r')$ is the unique normalized separating family obtained in Theorem \ref{unique-separation-char-2}.
\end{lemma}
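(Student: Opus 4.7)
The plan is to leverage Theorem \ref{unique-separation-char-2} together with Lemma \ref{equal-modulo-orientation} to identify the minimizers of \eqref{def-spi} explicitly as (Hahn--Banach extensions of) the unique normalized separating functionals $z_r'$.

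First I would establish $\Psi(r)=0$ together with the existence of a minimizer. Fix $r\in I$. By Theorem \ref{unique-separation-char-2}, the separating functional $z_r'\in(H,\enorm\cdot)'$ satisfies $\snorm{z_r'}_{E'}=1$ and vanishes on $\{\G=r\}$. By the Hahn--Banach extension theorem, there exists $\tilde z_r'\in E'$ with $\tilde z_r'{}_{|H}=z_r'$ and $\snorm{\tilde z_r'}_{E'}=1$. The separating equations \eqref{unique-separation-a1}--\eqref{unique-separation-a3} applied at the point $\xstar\in B$ show that $\sign\langle z_r',\xstar\rangle=\sign(\G(\xstar)-r)$; therefore one of $\pm\tilde z_r'$ lies in $S^+$ (and both do when $\G(\xstar)=r$). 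Since $\{\G=r\}\subset H$ and $z_r'$ vanishes there, this candidate attains the value $0$ in \eqref{psi-again}, so $\Psi(r)=0$ and the infimum is achieved.

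Second, I would prove the uniqueness-up-to-sign assertion on the restriction $z'_{|H}$. Assume $z'\in S^+$ satisfies \eqref{psi-again}. Since $\Psi(r)=0$, the functional $z'$ vanishes on $\{\G=r\}$, so $\{\G=r\}\subset B\cap \ker z'_{|H}$. Note that G1* entails G1 (because $\enorm\cdot\leq\fnorm\cdot$), so Theorem \ref{unique-separation-char-2} gives $\{\G=r\}=B\cap \ker z_r'$. Applying Lemma \ref{equal-modulo-orientation} with $z_1':=z_r'$ and $z_2':=z'_{|H}$ (both regarded as elements of $H'$) yields $z'_{|H}=\alpha z_r'$ for some $\alpha\in\R$. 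The common normalization $\snorm{z'_{|H}}_{E'}=1=\snorm{z_r'}_{E'}$ then forces $|\alpha|=1$.

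Finally, I would pin down the sign of $\alpha$ via the constraint $\langle z',\xstar\rangle\geq 0$ built into $S^+$. In the case $\G(\xstar)>r$, the separating relations give $\langle z_r',\xstar\rangle>0$, so $\alpha\langle z_r',\xstar\rangle\geq 0$ forces $\alpha=1$; symmetrically, $\G(\xstar)<r$ yields $\alpha=-1$; and if $\G(\xstar)=r$ then $\langle z_r',\xstar\rangle=0$ and the sign condition is vacuous, leaving $\alpha=\pm 1$. I do not expect a real obstacle here; the only point requiring care is to keep straight the distinction between $z_r'\in (H,\enorm\cdot)'$ and its (possibly non-unique, as \textbf{B4} is not assumed) extensions in $E'$, which is precisely why the conclusion is phrased in terms of $z'_{|H}$ rather than of $z'$ itself.
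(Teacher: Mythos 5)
Your proof is correct and follows essentially the same route as the paper: you construct a minimizer by Hahn--Banach extending $z_r'$ to $E'$ (using the sign of $\G(\xstar)-r$ to decide orientation), conclude $\Psi(r)=0$ because the extension vanishes on $\{\G=r\}\subset H$, and then pin down any minimizer's restriction to $H$ via Lemma \ref{equal-modulo-orientation}, the normalization $\snorm{z'_{|H}}_{E'}=1=\snorm{z_r'}_{E'}$ to get $|\alpha|=1$, and the constraint $\langle z',\xstar\rangle\geq 0$ to fix the sign. The only cosmetic difference is that the paper fixes the orientation by a WLOG assumption $\G(\xstar)\geq r$ in the existence step and argues the sign of $\alpha$ by contradiction, whereas you treat both signs symmetrically and read off $\alpha$ directly; the substance is the same.
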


\begin{proof}
To show the existence of $z'\in S^+$, we assume without loss of generality that $\G(\xstar) \geq r$.
 Then the unique normalized separating functional $z_r'\in(H, \enorm\cdot)'$ found in Theorem \ref{unique-separation-char-2}
satisfies 
\begin{displaymath}
 \sup_{x\in \{\G=r\}} |\langle z_r', x\rangle| = 0\, ,
\end{displaymath}
 and since $\Psi(r)\geq 0$, we conclude that 
\begin{displaymath}
 \Psi(r) = \sup_{x\in \{\G=r\}} |\langle z_r', x\rangle| = 0\, .
\end{displaymath}
In addition, $\G(\xstar) \geq r$ implies $\langle z_r',\xstar\rangle \geq 0$. Extending $z_r'$ to a bounded
linear functional $z'\in E'$ with the help of Hahn-Banach's extension  theorem,
see e.g.~\cite[Theorem 1.9.6]{Megginson98}, then yields  $z'\in S^+$, and as a by-product of the proof, we have also 
established $\Psi(r) = 0$.

To show the implications, we restrict our considerations to the case $\G(\xstar) <r$, 
the remaining two cases can be treated analogously.
Then the already established $\Psi(r) = 0$ yields $\langle z', x\rangle = 0$ for all $x\in \{\G=r\}$, that is 
$\{\G=r\} \subset B\cap \ker z'$. Since $\snorm{z_r'}_{E'} = 1=\snorm{z'_{|H}}_{E'}$, we then conclude 
by Lemma \ref{equal-modulo-orientation} and Theorem \ref{unique-separation-char-2} that 
$z_r' = -z_{|H}'$ or $z_r' = z_{|H}'$. Assume that the latter is true. Then
$\G(\xstar) <r$ implies $0> \langle z_r', \xstar\rangle = \langle z',\xstar\rangle \geq 0$, and hence we have found 
a contradiction. Consequently, we have  $z_r' = -z_{|H}'$.
\end{proof}

Our next goal is to show that there exists a measurable selection of the minimizers of the function $\Psi$.
To this end, we first need to show that the inner supremum is measurable, and to show 
this, we
 now consider the functions $\Phi_n:I\times E'\to \R$, $n\in \N\cup\{\infty\}$ defined by 
\begin{equation}\label{def-phin}
 \Phi_n(r, z') := \sup_{x\in \{\G=r\} \cap nB_E} \bigl|\langle z', x\rangle\bigr|\, , \qquad \qquad (r,z')\in I\times E'\, ,
\end{equation}
where $I\subset \R$ is an interval and $E$ is a normed space.
The following lemma shows that $\Phi_n$ is continuous in the second variable.

\begin{lemma}\label{continuous-in-z}
Let $E$ be a normed space, $B\subset E$ be non-empty, and $\G:B\to \R$ be a continuous map. 
  Then, for all $n\in \N$ and $r\in I$, the map $\Phi_n(r,\mycdot):E'\to \R$ defined by \eqref{def-phin} is continuous.
\end{lemma}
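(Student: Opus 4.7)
The plan is to prove a stronger statement, namely that $\Phi_n(r,\cdot)$ is $n$-Lipschitz on $E'$, which of course implies continuity. The key observation is that the set over which the supremum is taken, $\{\G=r\} \cap nB_E$, is norm-bounded by $n$ (because it is contained in $nB_E$), and on such a bounded set the map $z'\mapsto \langle z',x\rangle$ varies in a controlled way uniformly in $x$.

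More concretely, I would fix $n\in \N$ and $r\in I$, and then fix arbitrary $z_1',z_2'\in E'$. For every $x\in \{\G=r\} \cap nB_E$ the reverse triangle inequality together with $\enorm x \leq n$ yields
\begin{align*}
\bigl| |\langle z_1',x\rangle| - |\langle z_2',x\rangle| \bigr|
\,\leq\, |\langle z_1'-z_2',x\rangle|
\,\leq\, \snorm{z_1'-z_2'}_{E'} \cdot \enorm x
\,\leq\, n \snorm{z_1'-z_2'}_{E'}\, .
\end{align*}
Rearranging gives $|\langle z_1',x\rangle| \leq |\langle z_2',x\rangle| + n\snorm{z_1'-z_2'}_{E'}$. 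Taking the supremum over $x\in \{\G=r\} \cap nB_E$ on both sides (first on the right, since the bound is independent of $x$, then on the left) produces $\Phi_n(r,z_1') \leq \Phi_n(r,z_2') + n\snorm{z_1'-z_2'}_{E'}$, and swapping the roles of $z_1'$ and $z_2'$ yields
\begin{align*}
|\Phi_n(r,z_1') - \Phi_n(r,z_2')| \,\leq\, n\snorm{z_1'-z_2'}_{E'}\, ,
\end{align*}
which is the desired Lipschitz bound.

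The only subtlety is the edge case in which $\{\G=r\} \cap nB_E = \emptyset$, where the supremum is empty. In that situation $\Phi_n(r,\cdot)$ is constant (equal to the chosen value of $\sup\emptyset$, conventionally $0$ in this context since the values $|\langle z',x\rangle|$ are non-negative), and continuity is trivial; the Lipschitz estimate above also holds vacuously in this case. So there is really no substantive obstacle in this proof — it is a one-line uniform estimate on the bounded slice $nB_E$, and neither the convexity of level sets, the continuity of $\G$, nor any of the structural assumptions \textbf{B1}–\textbf{B5} or \textbf{G1}–\textbf{G5} are actually used.
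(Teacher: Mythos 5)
Your argument is correct and is essentially the same as the paper's: both establish that $\Phi_n(r,\cdot)$ is $n$-Lipschitz via the bound $|\Phi_n(r,z_1')-\Phi_n(r,z_2')| \leq \snorm{z_1'-z_2'}_{E'}\cdot n$, using that the supremum is taken over a subset of $nB_E$. The only difference is that you spell out the ``triangle inequality for suprema'' step and the empty-set edge case, which the paper leaves implicit.
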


\begin{proof}
 For $z_1', z_2'\in E'$ the triangle inequality for suprema yields
\begin{align*}
 \bigl| \Phi_n(r,z_1') - \Phi_n(r,z_2)\bigr|
& =\Biggl| \sup_{x\in \{\G=r\}\cap nB_E} \bigl|\langle z_1', x\rangle\bigr|  - \sup_{x\in \{\G=r\}\cap nB_E} \bigl|\langle z_2', x\rangle\bigr| \Biggr|\\
& \leq \sup_{x\in \{\G=r\}\cap nB_E} \bigl| \langle z_1',x\rangle - \langle z_2', x\rangle \bigr| \\
& \leq  \snorm{z_1'-z_2'}_{E'} \cdot n \, .
\end{align*}
Now the assertion easily follows.
\end{proof}

The next lemma shows that the function $\Phi_n$ is measurable in the first variable, provided that some technical 
assumptions are met.

\begin{lemma}\label{measurable-in-r}
Let $E$ be a separable Banach space, $B\subset E$ be non-empty, and $\G:B\to \R$ be a map satisfying \textbf{G4}.
%
Then, for all $n\in \N$ and $z'\in E'$, the map $\Phi_n(\mycdot, z'):I\to \R$ defined by \eqref{def-phin}
is $\hat{\ca B}(I)$-measurable.  
\end{lemma}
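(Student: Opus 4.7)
The plan is to express each superlevel set $\{r \in I : \Phi_n(r,z') > c\}$ as the projection of a Borel set in a Polish product space, and then invoke the classical Projection Theorem (see e.g.\ Cohn's \emph{Measure Theory}, Prop.~8.4.4): projections along one factor of a product of Polish spaces send Borel sets to analytic sets, and every analytic subset of $\R$ is universally measurable, hence a member of the Lebesgue completion $\hat{\ca B}(I)$.

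For the setup, I would use that a separable Banach space is Polish, so $E \times I$ is Polish, and by \textbf{G4} the set $B_0$ is a Borel subset of $E$. In the intended context of Theorem \ref{strong-measurable-and-int} the additional hypothesis \textbf{G1*} furthermore forces $\enorm\cdot$-continuity of $\G$ on $B_0$, so the graph
\[
 \Gamma_n := \bigl\{(x,r) \in (B_0 \cap nB_E) \times I : \G(x) = r\bigr\}
\]
is Borel in $E \times I$. For any $c \geq 0$ the intersection
\[
 W_{n,c} := \Gamma_n \cap \bigl\{(x,r) \in E\times I : |\langle z', x\rangle| > c\bigr\}
\]
is then again Borel, and by construction $\{r \in I : \Phi_n(r,z') > c\} = \pi_I(W_{n,c})$, where $\pi_I$ denotes the projection onto $I$. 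The Projection Theorem yields $\pi_I(W_{n,c}) \in \hat{\ca B}(I)$, and since the superlevel set for $c<0$ is all of $I$, letting $c$ vary shows that $r \mapsto \Phi_n(r, z')$ is $\hat{\ca B}(I)$-measurable.

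The chief conceptual hurdle is the appeal to descriptive set theory: a purely elementary attempt — approximating $\{\G = r\} \cap nB_E$ from outside by thin slabs $\{|\G - r| < 1/k\}$ along a countable dense subset of $nB_E$ — does not obviously converge to $\Phi_n$ in the limit, because $nB_E$ need not be norm compact and the level sets of $\G$ need not be stable under weak limits. The Borel/analytic/projection machinery cleanly sidesteps this difficulty.
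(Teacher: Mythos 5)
Your approach is essentially the same as the paper's, just unpackaged. The paper defines the multifunction $r\mapsto F(r)=\{\G=r\}\cap nB_E$ via the zero set of a product-Borel map $h(r,z)=(\hat\G(z)-r,\eins_{E\setminus B_n}(z))$, and then invokes Castaing–Valadier, Lemma~III.39, which says that $r\mapsto\sup_{z\in F(r)}\xi(r,z)$ is $\hat{\ca B}(I)$-measurable when $F$ has Borel graph in a Polish product and $\xi$ is jointly Borel. That lemma \emph{is} the projection theorem dressed up for multifunctions: under the hood it projects Borel subsets of $I\times E$ down to $I$, lands in the analytic sets, and then appeals to universal measurability. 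Your proof strips the packaging and does the projection by hand — form the Borel graph $\Gamma_n\subset E\times I$, intersect with the open set $\{|\langle z',\cdot\rangle|>c\}$, and observe $\{\Phi_n(\cdot,z')>c\}=\pi_I(W_{n,c})$ is analytic, hence in $\hat{\ca B}(I)$. Both routes hinge on the identical descriptive-set-theoretic input, so I would call them the same argument; the paper's version is a one-line citation, yours is more transparent about what that citation is doing.

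Your parenthetical observation is also a sharp one: the lemma as stated only hypothesizes \textbf{G4}, but for the graph $\Gamma_n$ (equivalently, for the paper's extension $\hat\G$) to be Borel you need $\G$ to be Borel measurable on $B_0\cap nB_E$, which does not follow from $B_0$ being Borel. The paper's proof has exactly the same gap — it asserts $\hat\G$ is $\ca B(E)$-measurable with no justification beyond the Borelness of $B_n$. In the only place the lemma is used (Corollary~\ref{measurable-Phi}) continuity of $\G$ is assumed, so the gap is harmless in context, but strictly speaking the hypothesis of the lemma is insufficient and should include continuity or Borel measurability of $\G$. Good catch.

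One small cosmetic remark: for $c<0$ the superlevel set $\{\Phi_n(\cdot,z')>c\}$ is not all of $I$ unless $\{\G=r\}\cap nB_E\neq\emptyset$ for every $r\in I$ (which can fail for small $n$); it equals $\pi_I(\Gamma_n)$, which is again analytic and so still in $\hat{\ca B}(I)$. The measurability conclusion is unaffected.
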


\begin{proof}
 Let us write $B_n := \G^{-1}(I)\cap nB_E$.
Note that  $nB_E$ is closed and thus $\ca B(E)$-measurable. Since $\G^{-1}(I)$
is $\ca B(E)$-measurable by \textbf{G4}, we conclude that $B_n$ is $\ca B(E)$-measurable.
Consequently,
 $\eins_{E\setminus B_n}:E\to \R$ is $\ca B(E)$-measurable, and
 the extension $\hat \G:E\to \R$ defined by 
\begin{displaymath}
 \hat \G(z) :=
\begin{cases}
 \G(z) & \mbox{ if } z\in B_n\\
0 & \mbox{ otherwise.}
\end{cases}
\end{displaymath}
is also $\ca B(E)$-measurable. Consequently, the map $h:I\times E\to \R^2$ defined by
\begin{displaymath}
 h(r,z) := \bigl(  \hat \G(z) - r,\, \eins_{E\setminus B_n}(z)\bigr)\, , \qquad \qquad (r,z)\in I\times E
\end{displaymath}
is ${\ca B}(I)\otimes \ca B(E)$-measurable. Moreover, note that the definition of $h$ yields 
\begin{displaymath}
 \{z\in E: h(r,z) = 0\} = \{z\in B_n: \G(z) = r\} = \{\G=r\}\cap nB_E\, .
\end{displaymath}
For $F:I\to 2^E$ defined by 
\begin{displaymath}
 F(r) :=  \bigl\{z\in E: h(r,z) \in \{ 0\}\bigr\}\, , 
\end{displaymath}
we thus find
$F(r) = \{\G=r\}\cap nB_E$ for all $r\in I$. 
Moreover, the graph of $F$, that is  
\begin{displaymath}
 \Gr F := \bigl\{  (r,z)\in I\times E: z\in F(r)    \bigr\}
 =  \bigl\{  (r,z)\in I\times E: h(r,z)=0   \bigr\}
\end{displaymath}
is ${\ca B}(I)\otimes \ca B(E)$-measurable, and 
 $\xi:I\times E\to \R$ defined by $\xi(r,z) :=  |\langle z', z\rangle|$
is continuous and thus $\ca B(I\times E)$-measurable.
Moreover, we have $\ca B(I\times E) = {\ca B}(I)\otimes \ca B(E)$ by 
\cite[Lemma 6.4.2]{Bogachev07_II} since $I$ and $E$ are both separable, and thus 
$\xi$ is 
${\ca B}(I)\otimes \ca B(E)$-measurable, too. Since separable Banach spaces are Polish spaces,
\cite[Lemma III.39 on p.~86]{CaVa77} then shows that the map 
\begin{displaymath}
 r\mapsto \sup_{z\in F(r)} \xi(r,z)
\end{displaymath}
is $\hat{\ca B}(I)$-measurable. From the latter we easily obtain the assertion.
\end{proof}

With the help of the two previous results, the 
 next result now establishes the desired measurability of $\P$.
Unfortunately, it requires a stronger separability assumption than the preceding lemmas.

\begin{corollary}\label{measurable-Phi}
Let $E$ be a  Banach space whose dual $E'$ is separable, $B\subset E$ be non-empty, 
and $\G:B\to \R$ be a continuous map satisfying \textbf{G4}.
%
%
Then  $\Phi_\infty:I\times E'\to \R$ is $\hat{\ca B}(I)\otimes \ca B(E')$-measurable.   
\end{corollary}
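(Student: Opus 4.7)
The plan is to express $\Phi_\infty$ as a countable supremum of the $\Phi_n$ and then establish joint measurability for each $\Phi_n$ via a standard Carath\'eodory-function approximation that exploits the separability of $E'$.

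First I would observe that, since the sets $\{\G=r\}\cap nB_E$ increase to $\{\G=r\}$ as $n\to\infty$ and the supremum of $|\langle z',x\rangle|$ over an increasing union equals the supremum of the suprema, we have
\begin{equation*}
\Phi_\infty(r,z') = \sup_{n\in\N}\Phi_n(r,z')
\end{equation*}
for every $(r,z')\in I\times E'$. A countable supremum of $\hat{\ca B}(I)\otimes\ca B(E')$-measurable functions is again $\hat{\ca B}(I)\otimes\ca B(E')$-measurable, so it suffices to prove joint measurability of each $\Phi_n$.

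For fixed $n\in\N$, Lemma \ref{continuous-in-z} shows that $\Phi_n(r,\mycdot):E'\to\R$ is continuous for every $r\in I$, and Lemma \ref{measurable-in-r} shows that $\Phi_n(\mycdot,z'):I\to\R$ is $\hat{\ca B}(I)$-measurable for every $z'\in E'$. Thus $\Phi_n$ is a Carath\'eodory function on $I\times E'$. To promote this to joint measurability I would use the separability of $E'$ as follows: pick a dense sequence $(z_k')_{k\in\N}\subset E'$, and for each $m\in\N$ partition $E'$ into pairwise disjoint Borel sets $(A_{m,k})_{k\in\N}$ of diameter at most $1/m$ with $z_k'\in A_{m,k}$ (for instance, the sets $A_{m,k}:=\{z'\in E':\snorm{z'-z_k'}_{E'}<1/m\}\setminus\bigcup_{j<k}A_{m,j}$). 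Define
\begin{equation*}
\Phi_n^{(m)}(r,z'):=\sum_{k=1}^{\infty}\Phi_n(r,z_k')\,\eins_{A_{m,k}}(z')\, .
\end{equation*}
Each summand is $\hat{\ca B}(I)\otimes\ca B(E')$-measurable because $r\mapsto\Phi_n(r,z_k')$ is $\hat{\ca B}(I)$-measurable by Lemma \ref{measurable-in-r} and $\eins_{A_{m,k}}$ is Borel, and the sum is well-defined since the $A_{m,k}$ are disjoint. Hence $\Phi_n^{(m)}$ is jointly measurable for each $m$.

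Finally, for each $(r,z')$, picking the unique $k=k(m,z')$ with $z'\in A_{m,k}$ yields $\snorm{z_k'-z'}_{E'}\leq 1/m$, so $\Phi_n^{(m)}(r,z')=\Phi_n(r,z_k')\to\Phi_n(r,z')$ as $m\to\infty$ by the continuity provided by Lemma \ref{continuous-in-z}. Thus $\Phi_n$ is a pointwise limit of jointly measurable functions and is therefore itself $\hat{\ca B}(I)\otimes\ca B(E')$-measurable, which completes the argument. The only nontrivial step is this Carath\'eodory-to-joint-measurability promotion, but it is standard once $E'$ is separable; everything else is assembly from the preceding lemmas.
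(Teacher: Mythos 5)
Your proof is correct and follows essentially the same route as the paper's: for each fixed $n$, combine Lemma \ref{measurable-in-r} (measurability in $r$) and Lemma \ref{continuous-in-z} (continuity in $z'$) to see that $\Phi_n$ is a Carath\'eodory function, conclude joint measurability, and then pass to $\Phi_\infty$ as a countable supremum (which equals the limit since the $\Phi_n$ increase in $n$). The only real difference is that the paper invokes a standard reference (Castaing--Valadier, Lemma III.14) for the Carath\'eodory-to-joint-measurability step, while you reprove it from scratch with the explicit piecewise-constant approximation over a countable Borel partition of $E'$; this is exactly the standard proof of that lemma, so the two arguments are mathematically identical, and yours is merely more self-contained.

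One small omission: Lemma \ref{measurable-in-r} requires $E$ to be a \emph{separable} Banach space, whereas the corollary only assumes that $E'$ is separable. You apply Lemma \ref{measurable-in-r} directly without verifying this hypothesis. It does hold, because separability of the dual implies separability of the space (as the paper notes, citing \cite[Theorem 1.12.11]{Megginson98}), but this one-line observation is needed to make your invocation of Lemma \ref{measurable-in-r} legitimate, and you should include it.
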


\begin{proof}
 Let us first recall, see e.g.~\cite[Theorem 1.10.7]{Megginson98}, that dual spaces are always Banach spaces. Consequently,
$E'$ is a Polish space. Moreover, the separability of $E'$ implies 
the separability of $E$, see e.g.~\cite[Theorem 1.12.11]{Megginson98}, and hence the map
$\Phi_n(\mycdot, z'):I\to \R$ is $\hat{\ca B}(I)$-measurable for all $z'\in E'$ and $n\in \N$ by Lemma \ref{measurable-in-r}.
Since  $\Phi_n(r,\mycdot):E'\to \R$ is continuous for all $r\in I$ and $n\in \N$ by Lemma \ref{continuous-in-z}, we conclude that 
$\Phi_n$ is a Carath\'eodory map. Moreover,  $E'$ is Polish,  
and thus $\Phi_n$ is $\hat{\ca B}(I)\otimes \ca B(E')$-measurable for all $n\in \N$, see e.g.~\cite[Lemma III.14 on p.~70]{CaVa77}.
Finally, we have $\Phi_\infty(r,z') = \lim_{n\to \infty} \Phi_n(r,z')$ for all $(r,z')\in I\times E'$, and hence 
$\Phi_\infty$ is also $\hat{\ca B}(I)\otimes \ca B(E')$-measurable.   
\end{proof}

The next result shows that we can find the minimizers of the infimum used in the definition
of $\Psi:I\to [0,\infty)$ in a measurable fashion.

\begin{theorem}\label{measurable-selection}
Assume that \textbf{B1}, \textbf{B2*}, \textbf{B3}, \textbf{B5}, \textbf{G1*}, \textbf{G2}, and \textbf{G4} are satisfied.
%
%
%
 Then there 
exists a measurable map $\zeta:(I, \hat{\ca B}(I))\to (E', \ca B(E'))$ such that, 
for all $r\in I$, we have $\zeta(r) \in S^+$ and 
\begin{displaymath}
 \Psi(r) =  \sup_{x\in \{\G=r\}} |\langle \zeta(r), x\rangle|\, .
\end{displaymath}
\end{theorem}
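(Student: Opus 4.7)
The plan is to exhibit $\zeta$ as a measurable selection from the set-valued map
\[
 M(r) := \bigl\{ z' \in S^+ : \Phi_\infty(r,z') = \Psi(r) \bigr\}\, , \qquad r \in I,
\]
and then invoke a standard measurable selection theorem. By Lemma \ref{unique-minimizer}, we have $\Psi(r)=0$ and $M(r)$ is non-empty for every $r \in I$ (it always contains an appropriate Hahn--Banach extension of $\pm z_r'$). For fixed $r$ the map $\Phi_\infty(r,\cdot)$ is continuous on $E'$ by Lemma \ref{continuous-in-z}, so the fiber $M(r)$ is closed.

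The first auxiliary task is to verify that $S^+$ is Borel in $E'$. The functional $z' \mapsto \langle z',\xstar\rangle$ is continuous, so $\{z' : \langle z',\xstar\rangle \geq 0\}$ is closed. Moreover, writing $N(z') := \snorm{z'_{|H}}_{E'} = \sup\{|\langle z',x\rangle| : x \in H,\, \enorm x \leq 1\}$, the seminorm $N$ is a supremum of continuous linear functionals and hence lower semicontinuous, so $\{N\leq 1\}$ is closed and
\[
 \{N=1\} = \{N \leq 1\} \cap \bigcap_{n\geq 1}\{N > 1 - 1/n\}
\]
is a Borel $G_\delta$ set. Intersecting gives that $S^+$ is Borel.

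The second task is to show that the graph $\Gr M = \{(r,z') \in I\times E' : z' \in S^+,\ \Phi_\infty(r,z') = 0\}$ belongs to $\hat{\ca B}(I) \otimes \ca B(E')$. This is now immediate: $I\times S^+$ lies in $\hat{\ca B}(I) \otimes \ca B(E')$ by the previous paragraph, and the $\hat{\ca B}(I) \otimes \ca B(E')$-measurability of $\Phi_\infty$ is supplied by Corollary \ref{measurable-Phi} (whose hypothesis that $\G$ be continuous is included in \textbf{G1*}, and whose hypothesis that $E'$ be separable is \textbf{B5}).

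Finally, I would apply the von Neumann--Aumann measurable selection theorem in the form of \cite[Theorem III.22]{CaVa77}: for any multifunction into a Polish space with non-empty values and graph measurable with respect to the product of a complete $\sigma$-algebra on the domain with the Borel $\sigma$-algebra on the codomain, there exists a measurable selection. Since $E'$ is a separable Banach space by \textbf{B5}, hence Polish, and $\hat{\ca B}(I)$ is the Lebesgue completion of $\ca B(I)$, this theorem applies and yields a $\hat{\ca B}(I)$-to-$\ca B(E')$-measurable map $\zeta$ with $\zeta(r) \in M(r) \subset S^+$ and $\Phi_\infty(r,\zeta(r))=\Psi(r)$ for every $r\in I$. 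The main subtlety, and where the completion of $\ca B(I)$ becomes essential, is that the projection of $\Gr M$ onto $I$ is in general only analytic and not Borel; passing to $\hat{\ca B}(I)$ absorbs this issue and is exactly the price one pays for not having to verify the stronger Kuratowski--Ryll-Nardzewski weak-measurability condition by hand.
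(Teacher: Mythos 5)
Your proof is correct and follows essentially the same route as the paper: define the multifunction $r\mapsto\{z'\in S^+:\Phi_\infty(r,z')=\Psi(r)\}$, use Lemma \ref{unique-minimizer} for non-emptiness and $\Psi\equiv 0$, use Corollary \ref{measurable-Phi} for the measurability of $\Phi_\infty$, and finish with the graph-measurable selection theorem from \cite[Theorem III.22]{CaVa77}. The only variation is in the treatment of $S^+$: the paper proves directly that $S^+$ is norm-closed (taking a norm-convergent sequence in $S^+$ and choosing witnesses $x_n$ to preserve the norm-one condition in the limit), while you argue that $z'\mapsto\snorm{z'_{|H}}_{E'}$ is lower semicontinuous as a supremum of continuous functions and extract $\{N=1\}$ as a $G_\delta$, so $S^+$ is Borel rather than closed. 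Both suffice for the selection theorem, since it only needs the graph to lie in $\hat{\ca B}(I)\otimes\ca B(E')$, and your closing remark about analytic projections and the role of the Lebesgue completion correctly pinpoints why $\hat{\ca B}(I)$ is used.
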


\begin{proof}
 Let us first show that $S^+$ is closed. To this end, we pick a sequence $(z_n')\subset S^+$ that converges 
in norm to some $z'\in E'$. Then $\langle z_n',\xstar\rangle \geq 0$ immediately implies $\langle z',\xstar\rangle\geq 0$.
To show that $\snorm{z'_{|H}}_{E'}=1$ we first observe that, for $x\in H$ with $\enorm x\leq 1$, we easily find
\begin{displaymath}
 |\langle z', x\rangle| = \lim_{n\to \infty} |\langle z_n', x\rangle| \leq 1\, ,
\end{displaymath}
and thus  $\snorm{z'_{|H}}_{E'}\leq 1$. To show the converse inequality, we pick, for all $n\geq 1$, 
an $x_n \in H$ with $\enorm {x_n} \leq 1$ such that  $1-1/n\leq |\langle z_n', x_n\rangle| \leq 1$. Then we obtain
\begin{displaymath}
 \bigl| \langle z', x_n\rangle - 1\bigr| \leq \bigl| \langle z'-z_n', x_n\rangle \bigr| + \bigl| \langle z_n', x_n\rangle -1\bigr|
\leq \snorm{z'-z_n'}_{E'} + 1/n\, ,
\end{displaymath}
and since the right hand-side converges to 0, we find $\snorm{z'_{|H}}_{E'}\geq 1$. Consequently, we have shown $z\in S^+$, 
and therefore, $S^+$ is indeed closed. From the latter, we conclude that $\eins_{E'\setminus S^+}:E'\to \R$ is 
$\ca B(E')$-measurable. Moreover, Corollary \ref{measurable-Phi} showed that 
$\Phi_\infty:I\times E'\to \R$
is  $\hat{\ca B}(I)\otimes \ca B(E')$-measurable, and consequently, the map $h:I\times E'\to \R^2$ defined by 
\begin{displaymath}
 h(r,z) := \bigl(\eins_{E'\setminus S^+}(z'), \, \Phi_\infty(r,z')  \bigr)\, , \qquad \qquad (r,z')\in I\times E',
\end{displaymath}
is also  $\hat{\ca B}(I)\otimes \ca B(E')$-measurable. We define $F:I\to 2^{E'}$ by
\begin{displaymath}
 F(r) := \bigl\{z'\in E': h(r,z') = 0\bigr\}\, , \qquad \qquad r\in I.
\end{displaymath}
Note that our construction ensures 
\begin{equation}\label{measurable-selection-h1}
 F(r) = \{z\in S^+:  \Phi_\infty(r,z') = 0\} = \biggl\{ z'\in S^+: \Psi(r) = \sup_{x\in \{\G=r\}} |\langle z', x\rangle|\biggr\}\, ,
\end{equation}
where in the last step we used the equality $\Psi(r) = 0$ established in Lemma \ref{unique-minimizer}. 
Moreover, the latter lemma also showed $F(r) \neq \emptyset$ for all $r\in I$, 
that is 
\begin{displaymath}
 \Dom F := \{r\in I: F(r) \neq \emptyset   \} = I\, .
\end{displaymath}
Since 
$E'$ is Polish, Aumann's
measurable selection principle, see \cite[part ii) of Lemma A.3.18]{StCh08} 
or \cite[Theorem III.22 on p.~74]{CaVa77}
yields a 
a measurable map $\zeta:(I, \hat{\ca B}(I))\to (E', \ca B(E'))$ with $\zeta(r) \in F(r)$ for all $r\in I$. 
Then \eqref{measurable-selection-h1} shows that $\zeta$ is the desired map.
\end{proof}

With these preparations, we can finally prove Theorem \ref{strong-measurable-and-int}.
The basic idea behind this proof is to combine Lemma \ref{unique-minimizer}
and Theorem \ref{measurable-selection}.

\begin{proof}[Proof of Theorem \ref{strong-measurable-and-int}]
%
Let us now consider the measurable selection 
$\zeta:I\to E'$ from Theorem \ref{measurable-selection}. 
Furthermore, we  fix an $r\in I$.
If $r> \G(\xstar)$, then Lemma \ref{unique-minimizer} shows that $\zeta(r)_{|H} = - z_r'$, and thus
$\zeta(r) = -\hat z_r'$ by \textbf{B4}.  Analogously, $r< \G(\xstar)$
implies $\zeta(r) = \hat z_r'$, and in the case $r=\G(\xstar)$ we have either 
$\zeta(r) = -\hat z_r'$ or $\zeta(r) = \hat z_r'$. From these relations 
it is easy to obtain the desired measurability of $Z:(I, \hat{\ca B}(I))\to (E', \ca B(E'))$.

Since the image $Z(I)$ is separable
by the separability of $E'$, we further see by \cite[Theorem 8, page 5]{Dinculeanu00} that $Z$ is an $E'$-valued 
measurable function in the sense of Bochner 
integration theory. 
\end{proof}

\begin{proof}[Proof of Corollary \ref{meas-cor}]
   We first need to verify the remaining assumptions of Theorem \ref{strong-measurable-and-int} for $E_0$.
   To this end, we first observe that $\p'_{|E_0}\in E_0'$ and therefore \textbf{B1} is satisfied
   for  $\p'_{|E_0}$.
   Moreover,  \textbf{B2*}
   and \textbf{G2} 
   are independent of $E_0$, and hence they are also satisfied. Consequently, we indeed 
    obtain a family $(\hat z_{0,r}')_{r\in I}\subset E_0'$ of separating functionals by 
Theorem \ref{unique-separation-char-2} and this family is measurable in the sense of Theorem \ref{strong-measurable-and-int}
with respect to the space $E_0'$. 

Now, Theorem \ref{unique-separation-char-2} yields for both families and all $r\in I$ that 
\begin{align*}
   \{\G=r\} &= \ker (\hat z_r')_{|H} \cap B \\
    \{\G=r\} &= \ker (\hat z_{0,r}')_{|H} \cap B \, ,
\end{align*}
and consequently, we obtain an $\a(r) \neq 0$ such that 
\begin{align}\label{meas-cor-p1}
    (\hat z_r')_{|H} = \a(r) (\hat z_{0,r}')_{|H}
\end{align}
by Lemma \ref{equal-modulo-orientation}. By fixing an $x\in  \{\G>r\}$ we further see by Theorem \ref{unique-separation-char-2}
that both functionals have the same orientation, and thus we find $\a(r) >0 $. In addition, \eqref{meas-cor-a1}
easily follows by the denseness of $H$ in $E_0$. To show that $\a$ is measurable, we first 
recall that we have $H\subset E_0\subset E$, and since $H$ is dense in $E$, we see that  $E_0$ is dense in $E$, too.
Moreover, $E_0$ is separable by \textbf{B5} and therefore we conclude that $E$ is separable. Consequently, there 
exists an at most countable $D\subset H$ such that $D\subset B_E$ is dense. 
Moreover, \eqref{meas-cor-p1} shows that $(\hat z_{0,r}')_{|H}$ is also continuous with respect to $\enorm\cdot$
and therefore, we obtain 
\begin{displaymath}
   h(r):= \mnorm{(\hat z_{0,r}')_{|H}}_{(H,\enorm\cdot)'}
   = 
   \sup_{x\in H\cap B_E} \bigl|\langle \hat z_{0,r}', x \rangle\bigr|
   = 
   \sup_{x\in D} \bigl|\langle \hat z_{0,r}', x \rangle\bigr|\, .
\end{displaymath}
Now, for each $x\in D$, Theorem \ref{strong-measurable-and-int} shows that $r\mapsto \langle \hat z_{0,r}', x \rangle$
is measurable with respect to the $\s$-algebras $\hat {\ca B}(I)$ and $\ca B(\R)$, and therefore $r\mapsto h(r)$ inherits this 
measurability. Moreover, using $\snorm{(\hat z_r')_{|H}}_{E'} =1$ and \eqref{meas-cor-p1} we find
$1 = \a(r) h(r)$ for all $r\in I$, and from the latter we easily obtain the desired measurability of $\a$.
\end{proof}

\section{Proofs for Section \ref{sec:cont-sep}}\label{sec:proof-3}

\begin{lemma}\label{cont-in-kernel}
 Let \textbf{B1}, \textbf{B2*}, \textbf{G1}, and \textbf{G2} be satisfied, and $(z_r')_{r\in I}\subset H'$
 be the unique normalized family of separating functionals obtained in Theorem \ref{unique-separation-char-1}.
 Then, for all  $r_0\in I$ and $z\in \ker z_{r_0}'$,  we have
 \begin{displaymath}
    \lim_{r\to r_0} \langle z_r', z\rangle = 0\, .
 \end{displaymath}
 Moreover, if \textbf{B3} and \textbf{G1*} are additionally satisfied, then the same holds for the 
 unique functionals $z_r'\in (H, \enorm\cdot)'$ obtained in Theorem \ref{unique-separation-char-2}.
\end{lemma}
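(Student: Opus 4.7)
The plan is to reduce the lemma to a pointwise claim about individual elements of the level set $\{\G=r_0\}$ and then exploit the locally non-constant condition \textbf{G2} at such a point. The reduction comes from Lemma \ref{kernel-is-determined}, which identifies $\ker z_{r_0}' = \spann \{\G=r_0\}$; any $z\in \ker z_{r_0}'$ is therefore a finite linear combination $z=\sum_{i=1}^n \a_i x_i$ with $x_i\in \{\G=r_0\}$, and by linearity of $z_r'$ it is enough to prove $\langle z_r', x\rangle\to 0$ for an arbitrary fixed $x\in \{\G=r_0\}$.

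For such an $x$ and any $\e>0$, I would use \textbf{G2} to pick $x^+\in \{\G>r_0\}$ and $x^-\in \{\G<r_0\}$ with $\hnorm{x-x^\pm}\leq \e$. Since $\G$ is continuous by \textbf{G1} (equivalently $\hnorm\cdot$-continuous by Lemma \ref{F-is-in-kernel}), there is a $\d>0$ such that $\G(x^-)<r<\G(x^+)$ whenever $r\in I$ satisfies $|r-r_0|\leq \d$. The separating property from Theorem \ref{unique-separation-char-1} then forces the sign pattern $\langle z_r', x^-\rangle<0<\langle z_r', x^+\rangle$, while $\langle z_r', x\rangle$ has the sign opposite to that of $r-r_0$ (and vanishes for $r=r_0$). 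The key trick is a one-line sandwich: for $r>r_0$ both $-\langle z_r', x\rangle$ and $\langle z_r', x^+\rangle$ are nonnegative, so their sum equals $\langle z_r', x^+-x\rangle$ and is bounded by $\snorm{z_r'}_{H'}\cdot \hnorm{x^+-x}\leq \e$ by the normalization. The case $r<r_0$ is symmetric via $x^-$, so $|\langle z_r', x\rangle|\leq \e$ on $|r-r_0|\leq \d$.

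For the supplementary statement under \textbf{B3} and \textbf{G1*}, I expect exactly the same argument to work, with the final estimate reading $\snorm{z_r'}_{E'}\cdot \enorm{x^+-x}$; the approximants delivered by \textbf{G2} are still adequate because $\enorm\cdot \leq \hnorm\cdot$ on $H$ by Lemma \ref{F-is-in-kernel}. I do not foresee any genuine obstacle: the only subtlety worth attention is the sign bookkeeping that permits the sum of two positive quantities $-\langle z_r', x\rangle$ and $\langle z_r', x^+\rangle$ to be rewritten as a single small dual pairing on the short segment $x^+-x$. Once that is in place, everything is a direct consequence of \textbf{G2}, the continuity of $\G$, and the normalization of $z_r'$.
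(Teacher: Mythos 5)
Your proof is correct and takes a genuinely different route at the crucial estimate. Both you and the paper reduce via Lemma \ref{kernel-is-determined} to the case $z\in\{\G=r_0\}$, apply \textbf{G2} to produce $x^\pm$ within $\hnorm\cdot$-distance $\e$ of $z$, and pick $\d>0$ so that $\G(x^-)<r<\G(x^+)$ whenever $|r-r_0|\leq\d$. Where you diverge is in the conclusion. The paper parametrizes the segment $x_\a := (1-\a)x^- + \a x^+$, invokes the intermediate value theorem to find $\a_r$ with $\G(x_{\a_r})=r$, and uses $\langle z_r',x_{\a_r}\rangle = 0$ plus the triangle inequality, bounding $|\langle z_r', z\rangle| = |\langle z_r', z-x_{\a_r}\rangle| \leq \hnorm{z - x_{\a_r}}\leq\e$ by convexity. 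You instead use a one-sided sign sandwich: for $r>r_0$, the separating property gives $-\langle z_r', z\rangle\geq 0$ and $\langle z_r', x^+\rangle > 0$, so $-\langle z_r', z\rangle \leq \langle z_r', x^+ - z\rangle \leq \hnorm{x^+-z}\leq\e$, with $x^-$ playing the mirror role for $r<r_0$. Your version is slightly leaner — it avoids the intermediate value theorem entirely (and in fact the continuity you cite when choosing $\d$ is superfluous; $\G(x^-)<r_0<\G(x^+)$ alone lets you set $\d := \min\{r_0-\G(x^-),\,\G(x^+)-r_0\}$) — while the paper's version has the virtue of exhibiting, for each nearby $r$, an explicit witness in $\ker z_r'$ close to $z$. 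The extension to the $(H,\enorm\cdot)'$ case under \textbf{B3}, \textbf{G1*} goes through in both arguments exactly as you say, via $\enorm\cdot\leq\hnorm\cdot$ on $H$ from Lemma \ref{F-is-in-kernel}.
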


\begin{proof}
 Let us first consider the case $z\in \{\G=r_0\}$. For $\e>0$  there then exist $x^-\in \{\G<r_0\}$ and 
$x^+\in \{\G>r_0\}$ such that $\hnorm{z-x^-}\leq \e$ and $\hnorm{z-x^+}\leq \e$. Consequently,
there exists a $\d>0$ such that $[r_0-\d, r_0+\d] \subset [\G(x^-), \G(x^+)]$. For $\a\in [0,1]$
we define $x_\a := (1-\a)x^- + \a x^+$. Clearly, this gives $\hnorm{z-x_\a} \leq \e$ for all $\a\in [0,1]$.
Moreover, the $\fnorm\cdot$-continuity of $\G$ together with the intermediate theorem 
 shows that, for all $r\in (r_0-\d, r_0+\d)$ there exists 
an $\a_r\in [0,1]$  such that $\G(x_{\a_r}) = r$, that is $x_{\a_r}\in \{\G=r\} \subset \ker z_r'$. 
For $r\in (r_0-\d, r_0+\d)$, 
this yields
\begin{displaymath}
 \bigl| \langle z_r', z\rangle \bigr|
 \leq   \bigl| \langle z_r', z - x_{\a_r} \rangle \bigr| +  \bigl| \langle z_r', x_{\a_r}\rangle \bigr|
 \leq \hnorm{z- x_{\a_r}}   \leq \e\, .
\end{displaymath}
This shows the assertion for $z\in \{\G=r_0\}$.
The general case $z\in \ker z_{r_0}'$ now follows from 
$ \ker z_{r_0}' = \spann (\ker z_{r_0}'\cap B) = \spann \{\G=r_0\}$ 
established in Lemma \ref{kernel-is-determined}.

Finally, if \textbf{B3} and \textbf{G1*} are   satisfied and $(z_r')\subset (H, \enorm\cdot)'$ 
denotes the unique  separating functionals obtained by Theorem \ref{unique-separation-char-2}
we can literally repeat the first part 
for $z\in \{\G=r_0\}$ and obtain 
\begin{align*}
 \bigl| \langle z_r', z\rangle \bigr|
 \leq   \bigl| \langle z_r', z - x_{\a_r} \rangle \bigr| +  \bigl| \langle z_r', x_{\a_r}\rangle \bigr|
   \leq \enorm{z- x_{\a_r}}
 \leq \hnorm{z- x_{\a_r}}   \leq \e\, .
%
\end{align*}
by Lemma \ref{F-is-in-kernel}.
The general case $z\in \ker z_{r_0}'$ again follows by Lemma \ref{kernel-is-determined}.
\end{proof}

\begin{lemma}\label{weak-convergence}
  Let \textbf{B1}, \textbf{B2*}, \textbf{B3}, \textbf{B4}, \textbf{B6}, \textbf{G1*}, and \textbf{G2} be satisfied, and 
  $(\hat z_r')_{r\in I}\subset E'$
 be the family of separating functionals obtained in Theorem \ref{unique-separation-char-2}. Moreover, let 
  $r\in I$ and  $(r_n)\subset I$ with $r_n\to r$. Then there exist a subsequence 
  $(r_{n_k})$ of $(r_n)$ and an $\a\in [0,1]$ such that for all $z\in E$ we have 
  \begin{displaymath}
   \langle \hat z'_{r_{n_k}}, z\rangle \to \langle \a \hat z'_r, z\rangle \, . 
  \end{displaymath}
\end{lemma}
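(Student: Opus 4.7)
The plan is to use weak* compactness to extract a candidate limit, then identify it up to a scalar by showing it vanishes on $\ker \hat z'_r$, and finally pin down the scalar to lie in $[0,1]$.

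First, by \textbf{B6} the space $E$ is separable, so the closed unit ball of $E'$ is weak*-metrizable and therefore weak*-sequentially compact by the Banach--Alaoglu theorem. Since $\snorm{\hat z'_{r_n}}_{E'}=1$ for all $n$ by Theorem~\ref{unique-separation-char-2}, we can extract a subsequence $(r_{n_k})$ and a functional $z'\in E'$ with $\snorm{z'}_{E'}\leq 1$ such that for every $z\in E$,
\begin{equation*}
   \langle \hat z'_{r_{n_k}},z\rangle \to \langle z',z\rangle.
\end{equation*}

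Next, I want to show that $z'$ vanishes on $\ker \hat z'_r$, which, combined with $\hat z'_r\neq 0$, will force $z'=\alpha \hat z'_r$ for some $\alpha\in\R$ by a standard linear-algebra fact. For $z\in H\cap \ker\hat z'_r$, Lemma~\ref{cont-in-kernel} applied to the restriction $(\hat z'_r)_{|H}=z'_r\in (H,\enorm\cdot)'$ gives $\langle \hat z'_{r_{n_k}},z\rangle\to 0$, so $\langle z',z\rangle=0$. To pass from $H\cap\ker\hat z'_r$ to the full closed hyperplane $\ker \hat z'_r$, I will use \textbf{B4}: since $H$ is dense in $E$ and $\hat z'_r$ is a nonzero continuous functional, $\hat z'_r$ does not vanish on all of $H$, so we may pick $z_0\in H$ with $\langle\hat z'_r,z_0\rangle = 1$. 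For any $x\in \ker \hat z'_r$ choose $(x_n)\subset H$ with $x_n\to x$, and set $y_n := x_n - \langle \hat z'_r,x_n\rangle z_0$. Then $y_n\in H\cap \ker \hat z'_r$ and $y_n\to x$, establishing the required density and hence $z'_{|\ker \hat z'_r}=0$.

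Once we have $z'=\alpha \hat z'_r$, the bound $\snorm{z'}_{E'}\leq 1=\snorm{\hat z'_r}_{E'}$ gives $|\alpha|\leq 1$. To get $\alpha\geq 0$, I pick any $x\in\{\G>r\}\subset B$ (nonempty by Lemma~\ref{quasi-monotone-level-sets}, applicable because \textbf{G1*} and \textbf{G2} imply quasi-monotonicity of $\G$ on $B_0$). Since $\G$ is $\enorm\cdot$-continuous and $r_n\to r$, for all sufficiently large $k$ we have $\G(x)>r_{n_k}$, and Theorem~\ref{unique-separation-char-2} yields $\langle \hat z'_{r_{n_k}},x\rangle>0$. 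Passing to the limit gives $\alpha\langle \hat z'_r,x\rangle=\langle z',x\rangle\geq 0$, and since $\langle \hat z'_r,x\rangle>0$ we conclude $\alpha\geq 0$.

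The main obstacle is the density step: Lemma~\ref{cont-in-kernel} is only available on the algebraic pieces sitting inside $H$, whereas $\ker \hat z'_r$ is a closed hyperplane in all of $E$, so without the trick of subtracting off $\langle\hat z'_r,x_n\rangle z_0$ with $z_0\in H$ we would not be able to approximate an arbitrary element of $\ker \hat z'_r$ by elements in $H\cap \ker \hat z'_r$. Everything else is routine compactness and normalization bookkeeping.
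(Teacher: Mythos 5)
Your proof is correct and follows essentially the same route as the paper: sequential Banach--Alaoglu to extract a weak*-limit $z'$, Lemma~\ref{cont-in-kernel} to show $z'$ vanishes on $\ker z_r'\subset H$, identification of $z'$ as a scalar multiple of $\hat z_r'$, then the normalization and orientation arguments for $\a\in[0,1]$. The only cosmetic differences are that the paper invokes Lemma~\ref{equal-modulo-orientation} for the scalar-multiple step and then extends from $H$ to $E$ by density, whereas you run the density argument first and then use elementary linear algebra in $E'$ (and your passing remark that ``$\G$ is $\enorm\cdot$-continuous'' is not actually needed for $\G(x)>r_{n_k}$ eventually — this is just $r_{n_k}\to r<\G(x)$ for the fixed point $x$).
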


\begin{proof}
 Since $(\hat z_{r_n}')_{r\in I}\subset B_{E'}$, 
  the sequential Banach-Alaoglu theorem, see e.g.~\cite[Theorem 2.6.18 in combination with Theorem 2.6.23]{Megginson98}
  and also \cite[Exercise 2.73]{Megginson98},
  guarantees that there exist a subsequence
  $(\hat z_{r_{n_k}}')$ and an $z'\in B_{E'}$ such that 
  \begin{equation}\label{weak-convergence-p1}
   \langle \hat z'_{r_{n_k}}, z\rangle \to \langle z', z\rangle  
  \end{equation}
  for all $z\in E$. By Lemma \ref{cont-in-kernel} we conclude that $\langle z', z\rangle = 0$
  for all $z\in \ker \hat z_r'$, and thus $\ker (\hat z_r')_{|H} \subset \ker z'_{|H}$.
  Consequently, we have both $\{\G=r\} = B\cap \ker (\hat z_r')_{|H}$ and  $\{\G=r\} \subset B\cap \ker z'_{|H}$ 
  and therefore Lemma \ref{equal-modulo-orientation} gives an $\a\in \R$ such that 
  $z'_{|H}= \a \cdot (\hat z_r')_{|H}$, and thus $z' = \a\hat z_r'$ by the denseness of $H$ in $E$.
  Using $\snorm{z'} \leq 1 = \snorm{\hat z_r'}$ we find $\a\in [-1,1]$ and 
  \eqref{weak-convergence-p1} gives the desired convergence. Let us finally show that $\a\geq 0$. To this end,
  note that by Lemma \ref{quasi-monotone-level-sets} we find
    an $r_0\in I$ with $r_0> r_n$ for all $n\geq 1$. This obviously gives $r_0\geq r$.
    Let us further fix a $z\in \{\G=r_0\}$. Then we have $z\in \{\G>r_n\} = \{z_n' >0\}\cap B$ and 
    thus $\langle \hat z'_{r_{n_k}}, z\rangle > 0$ for all $k\geq 1$. Analogously we conclude from $r_0\geq r$ that 
    $\langle \hat z'_r, z\rangle \geq 0$, and thus $\a<0$ is impossible.
\end{proof}

\begin{lemma}\label{margin-calculation}
 Let $E$ be an arbitrary normed space. Then for all $x\in E$ and all $x'\in {E'}$ with $\snorm{x'}_{E'}=1$ we have 
\begin{displaymath}
 d(x, \ker x') = |\langle x', x\rangle|\, .
\end{displaymath}
\end{lemma}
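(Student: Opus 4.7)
The plan is to prove the two inequalities $d(x,\ker x') \leq |\langle x',x\rangle|$ and $d(x,\ker x') \geq |\langle x',x\rangle|$ separately, using only the definition of the dual norm and a simple construction of a near-minimizing element of $\ker x'$.

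For the easy direction ``$\geq$'', I would fix an arbitrary $y \in \ker x'$ and observe that, since $\langle x', y\rangle = 0$, we have
\begin{displaymath}
|\langle x', x\rangle| = |\langle x', x-y\rangle| \leq \snorm{x'}_{E'} \cdot \enorm{x-y} = \enorm{x-y}\, ,
\end{displaymath}
where the last equality uses $\snorm{x'}_{E'} = 1$. Taking the infimum over $y \in \ker x'$ yields $|\langle x', x\rangle| \leq d(x,\ker x')$.

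For the reverse inequality ``$\leq$'', if $\langle x',x\rangle=0$ then $x\in\ker x'$ and both sides vanish, so I may assume $\langle x',x\rangle\neq 0$. Using $\snorm{x'}_{E'} = 1$, for every $\e \in (0,1)$ I pick a $z_\e \in E$ with $\enorm{z_\e} = 1$ and $\langle x', z_\e\rangle \geq 1 - \e$ (after possibly replacing $z_\e$ by $-z_\e$ and shrinking $\e$, this is achievable since $\sup_{\enorm u\leq 1} |\langle x',u\rangle| = 1$). Then I define
\begin{displaymath}
y_\e := x - \frac{\langle x', x\rangle}{\langle x', z_\e\rangle}\, z_\e \, ,
\end{displaymath}
which by construction lies in $\ker x'$, and compute
\begin{displaymath}
d(x,\ker x') \leq \enorm{x - y_\e} = \frac{|\langle x', x\rangle|}{|\langle x', z_\e\rangle|} \cdot \enorm{z_\e} \leq \frac{|\langle x', x\rangle|}{1-\e}\, .
\end{displaymath}
Letting $\e \to 0$ gives $d(x,\ker x') \leq |\langle x', x\rangle|$, completing the proof.

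There is no serious obstacle here; the only subtle point is ensuring that an approximating $z_\e$ with $\langle x',z_\e\rangle$ close to $+1$ (rather than just $|\langle x',z_\e\rangle|$ close to $1$) exists, which is immediate by replacing $z_\e$ with $-z_\e$ if necessary. Everything else is a one-line computation from the definitions.
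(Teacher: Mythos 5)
Your proof is correct and follows essentially the same approach as the paper's: the ``$\leq$'' direction uses the identical construction $y_\e := x - \frac{\langle x',x\rangle}{\langle x',z_\e\rangle}z_\e$ with a near-norming $z_\e$, and the ``$\geq$'' direction is the same estimate $|\langle x',x\rangle| = |\langle x',x-y\rangle| \leq \enorm{x-y}$, which you simply state more directly (taking the infimum over $y\in\ker x'$) while the paper phrases it via a near-minimizing $z$ and an $\e\to 0$ limit.
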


\begin{proof}
``$\leq$'': Let us fix an $\e\in (0,1)$. Since $\snorm{x'}=1$, there then exists an $x_0\in B_E$ with 
$\langle x', x_0\rangle \geq 1-\e$. Clearly, this gives $x_0\not\in \ker x'$, and an easy calculation then 
shows 
\begin{displaymath}
 z:= x - \frac{\langle x',x\rangle}{\langle x',x_0\rangle}x_0 \in \ker x'\, .
\end{displaymath}
From the latter we then conclude that 
\begin{displaymath}
 d(x,\ker x') \leq \enorm{x-z} = \biggl| \frac{\langle x',x\rangle}{\langle x',x_0\rangle} \biggr| \cdot \enorm{x_0}
\leq \frac{|\langle x', x\rangle|}{1-\e}\, .
\end{displaymath}
Letting $\e\to 0$, then gives the desired inequality.

``$\geq$'': If $x\in \ker x'$, there is nothing to prove, and hence we assume without
loss of generality that $x\not \in \ker x'$. For $\e>0$, we now fix an $z\in \ker x'$ such that 
$\enorm{x-z} \leq d(x,\ker x') + \e$. Then $x\not \in \ker x'$ ensures $x-z\not \in \ker x'$, and thus we find
\begin{displaymath}
d(x,\ker x') + \e\geq  \enorm{x-z} 
= \biggl| \frac{\langle x',x\rangle}{\langle x',x-z\rangle} \biggr| \cdot \enorm{x-z} 
\geq |\langle x',x\rangle|\, ,
\end{displaymath}
where in the last step we used $|\langle x',x-z\rangle| \leq \enorm{x-z}$.
Letting $\e\to 0$, then gives the desired inequality.
\end{proof}

\begin{proof}[Proof of Theorem \ref{cont-dep}]
  \atob i {iii}
   By Lemma \ref{weak-convergence} is suffices to show that 
   independent of the sequence $(r_n)$ and its subsequence 
   we always have $\a=1$.
   To show the latter let us first assume that \eqref{g5-conv} is actually satisfied for some $x\in B \setminus \spann\{\G=r  \}$.
   Let us assume without loss of generality that $r_0:= \G(x)$ satisfies $r_0>r$. Then there is an $n_0\geq 1$ such that
    $r_0 > r_n$ for all 
   $n\geq n_0$ and thus we find both $x\in \{\G>r_n\} \subset \{z_{r_n}' >0\}$
   and $x\in \{\G>r\} \subset \{z_{r}' >0\}$.
   Combining Lemma \ref{margin-calculation} with Lemma \ref{kernel-is-determined} and Theorem \ref{unique-separation-char-2}
   we then find 
   \begin{align}\label{cont-dep-p1}
       \langle \hat z'_{r_{n}}, x\rangle 
       = 
       d(x, \ker z_{r_n}')
       =
        d\bigl(x, \spann\{\G=r_n  \}  \bigr) 
        \to  d\bigl(x, \spann\{\G=r  \}  \bigr)
       = 
        d(x, \ker z_{r}')
        =
       \langle  \hat z'_r, x\rangle 
   \end{align}
    and since $\langle  \hat z'_r, x\rangle >0$   we conclude that we indeed always have $\a=1$.
    
    In the remaining part of the proof we show that the strong version of  \eqref{g5-conv} used above is implied by \textbf{G5}.
    To this end, let us first  assume that \eqref{g5-conv}  is satisfied for some $x\in F \setminus \spann\{\G=r  \}$.
    By 
    Lemma \ref{all-interirors}  we   fix 
      an $x_r\in \{\G=r\}$ such that $-\xstar+x_r\in\relint AF$.  Consequently, there exists an $\e>0$ such that 
      for all $y\in F$ with $\fnorm y\leq \e$ we have $-\xstar+x_r + y \in A =  -\xstar + B$, that is 
      $x_r + y \in  B$.  Moreover, we easily find a $\d>0$ such that $\fnorm{\d x}\leq \e$ and thus 
      we obtain $\bar x := x_r  + \d x\in B$. In addition, $x\not\in \spann\{\G=r  \}$  together with 
      $x_r\in \spann\{\G=r  \}$ yields $\bar x\not\in \spann\{\G=r  \}$.
      Let us verify that \eqref{g5-conv} holds for $\bar x$.
      To this end, we assume without loss of generality that $\langle z_r',x \rangle > 0$.
      Repeating the arguments in \eqref{cont-dep-p1} we then find 
      $| \langle \hat z'_{r_{n}}, x\rangle | \to  \langle  \hat z'_r, x\rangle $, and by Lemma \ref{weak-convergence}
      we see that $-\langle \hat z'_{r_{n_k}}, x\rangle  \to  \langle  \hat z'_r, x\rangle $ for some subsequence $(r_{n_k})$  is 
      impossible. Therefore, we actually have $\langle \hat z'_{r_{n}}, x\rangle  \to  \langle  \hat z'_r, x\rangle $.
%
      In addition, Lemma \ref{cont-in-kernel} shows that $\langle   z'_{r_{n}}, x_r\rangle \to 0$.
      With these preparatory considerations we now obtain, analogously to  \eqref{cont-dep-p1}, that 
      \begin{align*}
          d\bigl(\bar x, \spann\{\G=r_n  \}  \bigr) 
          = \bigl| \langle z_{r_n}',    x_r  + \d x     \rangle       \bigr|
          \to 
          \bigl| \langle z_{r}',    x_r  + \d x     \rangle       \bigr|
          =
           d\bigl(\bar x, \spann\{\G=r  \}  \bigr) \, ,
      \end{align*}
      that is $\bar x\in B \setminus \spann\{\G=r  \}$ satisfies \eqref{g5-conv}. 
      
      In our last step, we assume that only \textbf{G5}  is satisfied, i.e.~\eqref{g5-conv}
      holds for some $x\in H \setminus \spann\{\G=r  \}$.
      With the help of the previous step, it then suffices to find an $y \in F \setminus \spann\{\G=r  \}$
      for which \eqref{g5-conv} holds. To this end, recall that Lemma \ref{generating-H} showed $H= F\oplus \R x_r$,
      where $x_r \in \{\G=r\}$ is again a vector satisfying $-\xstar+x_r\in\relint AF$. 
      Consequently, we have $x = y + \a x_r$, for some suitable $y\in F$ and $\a\in \R$, and since 
      we have already considered the case $\a=0$ in the previous step, we may assume that $\a\neq 0$.
      Now, we clearly have $y = x - \a x_r \in F$, and since $x_r\in  \spann\{\G=r  \}$ but 
      $x\not \in  \spann\{\G=r  \}$, we find $y\not\in \spann\{\G=r  \}$, that is $x\in F\setminus \spann\{\G=r  \}$.
      Finally, verifying \eqref{g5-conv} for $y$ is analogous to the previous case.
      
      \atob {iii} {ii} This immediately follows from $ \ker z' = \spann \{\G=r\}$, which has been  established in 
      Lemma \ref{kernel-is-determined}, and Lemma \ref{margin-calculation}.
      
      \atob {ii} i This implication is trivial.
\end{proof}

\begin{proof}[Proof of Corollary \ref{cont-dep-fin-dim}]
   Let 
  $r\in I$ and  $(r_n)\subset I$ with $r_n\to r$. Since $E$ is separable, Lemma \ref{weak-convergence} shows that 
   there exist a subsequence 
  $(r_{n_k})$ of $(r_n)$ and an $\a\in [0,1]$ such that for all $z\in E$ we have 
  \begin{displaymath}
   \langle \hat z'_{r_{n_k}}, z\rangle \to \langle \a \hat z'_r, z\rangle \, . 
  \end{displaymath}
  Moreover, in finite dimensional spaces, 
  weak*-convergence implies norm-convergence, and hence we obtain $\snorm{\hat z'_{r_{n_k}} -\a\hat z'_r }_{E'}\to 0$.
  Since $\snorm{\hat z'_{r_{n_k}} }_{E'} = 1 = \snorm{\hat z'_{r} }_{E'}$, we then find $\a=1$, and since 
  the sequence $(r_n)\subset I$ was chosen arbitrarily, we obtain the assertion by a standard argument.
\end{proof}

\subsubsection*{Acknowledgment}

Much of the work of this paper was done while I
was visiting the NICTA research lab in Canberra.  NICTA is funded by the
Australian Government. Special thanks go to Bob Williamson who made this stay possible, 
who   introduced me to the question of elicitation, and with whom I had many fruitful discussions
about this subject.

{\small
\bibliographystyle{plain}
\bibliography{../../literatur-DB/steinwart-mine,../../literatur-DB/steinwart-books,../../literatur-DB/steinwart-proc,../../literatur-DB/steinwart-article}
}

\newpage

\end{document}